\newcommand{\TheTitle}{Analysis of the Ensemble Kalman Filter for Inverse Problems} 
\newcommand{\TheAuthors}{C. Schillings and A. M. Stuart}
\headers{\TheTitle}{\TheAuthors}
\title{{\TheTitle}}%\thanks{This work was funded by the Fog Research Institute under contract no.~FRI-454.}}
\author{
  Claudia Schillings\thanks{Mathematics Institute, University of Warwick, Coventry CV4 7AL, UK 
    (\email{c.schillings@warwick.ac.uk)}.}
  \and
  Andrew M. Stuart\thanks{Mathematics Institute, University of Warwick, Coventry CV4 7AL, UK (\email{a.m.stuart@warwick.ac.uk}).}
}
\theoremstyle{plain}
\theoremstyle{remark}
\newcommand{\cX}{\mathcal X}
\newcommand{\cY}{\mathcal Y}
\newcommand{\bbR}{\mathbb R}
\newcommand{\dd}{{\mathrm d}}
\newcommand{\cG}{\mathcal{G}}
\newcommand{\bu}{\overline{u}}
\newcommand{\bG}{\overline{\cG}}
\newcommand{\ud}{u^{\dagger}}
\newcommand{\one}{\mathsf l}
\newcommand{\Xp}{\cX^{\perp}}
\newcommand{\Yl}{\cY^{\|}}
\newcommand{\Yp}{\cY^{\perp}}
\newcommand{\rl}{r^{(j)}_{\|}}
\newcommand{\rp}{r^{(j)}_{\perp}}
\newcommand{\dll}{\vartheta^{(j)}_{\|}}
\newcommand{\dpp}{\vartheta^{(j)}_{\perp}}
\definecolor{darkred}{rgb}{.7,0,0}
\newcommand{\beq}{\begin{equation}}
\newcommand{\eeq}{\end{equation}}
\newcommand{\bea}{\begin{eqnarray}}
\newcommand{\eea}{\end{eqnarray}}
\newcommand{\beas}{\begin{eqnarray*}}
\newcommand{\eeas}{\end{eqnarray*}}
\def\b1{{\bf 1}}
\def\cG{{\mathcal G}}
\newcommand{\rd}{\mathrm{d}}
\newcommand{\cls}[1]{{\color{black}{#1}}}
\newcommand{\diverg}{\mbox{div}}
\newcommand{\spann}{\operatornamewithlimits{span}}
\newcommand{\id}{\operatornamewithlimits{id}}
\begin{document}

\maketitle

% REQUIRED
\begin{abstract}
  The ensemble Kalman filter (EnKF) is a widely used methodology for
state estimation in partial, noisily observed dynamical systems,
and for parameter estimation in inverse problems. Despite its
widespread use in the geophysical sciences, and its gradual adoption
in many other areas of application, analysis of the method is in its infancy.
Furthermore, much of the existing analysis deals with the large ensemble
limit, far from the regime in which the method is typically used.
The goal of this paper is to analyze the method when applied to
inverse problems with fixed ensemble size. A continuous-time limit 
is derived and the long-time behavior of the resulting dynamical 
system is studied.  Most of the rigorous analysis is confined to 
the \cls{linear forward problem}, where we demonstrate that the
continuous time limit of the EnKF corresponds to a set of gradient
flows for the data misfit in each ensemble member, coupled through
a common pre-conditioner which is the empirical covariance matrix 
of the ensemble.  Numerical results demonstrate that the conclusions
of the analysis extend beyond the linear inverse problem setting.  
Numerical experiments are also given which demonstrate the benefits
of various extensions of the basic methodology.
\end{abstract}

% REQUIRED
\begin{keywords}
Bayesian Inverse Problems, Ensemble Kalman Filter, Optimization
\end{keywords}

% REQUIRED
\begin{AMS}
  65N21, 62F15, 65N75
\end{AMS}

\section{Introduction}
\label{sec:I}

The Ensemble Kalman filter (EnKF) has had enormous impact on the
applied sciences since its introduction in the 1990s by Evensen
and coworkers; see \cite{evensen2003ensemble} for an overview. It
is used for both data assimilation problems, where the objective
is to estimate a partially observed time-evolving system 
\cls{sequentially in time} \cite{kalnay2003atmospheric}, and inverse problems, where the
objective is to estimate a (typically distributed) parameter appearing
in a differential equation \cite{oliver2008inverse}. Much of the
analysis of the method has focussed on the large ensemble limit 
 \cls{\cite{li2008numerical,le2009large, gratton2014convergence, kwiatkowski2015convergence, ernst2015analysis,law2016deterministic}}. However the primary reason for the 
adoption of the
method by practitioners is its robustness and perceived effectiveness
when used with small ensemble sizes, as discussed
in \cite{bergemann2010localization,bergemann2010mollified} for example.
It is therefore important to
study the properties of the EnKF for fixed ensemble size, in order
to better understand current practice, and to suggest future directions
for development of the method. Such fixed ensemble size
analyses are starting to appear in the literature for
both data assimilation problems \cite{kelly2014well,tongnonlinear}
and inverse problems \cite{iglesias2013ensemble, iglesias2014iterative, iglesias2015regularizing}. 
In this paper
we analyze the EnKF for inverse problems, adding greater depth 
to our understanding of the basic method, as formulated in 
\cite{iglesias2013ensemble}, as well as variants on the basic method
which employ techniques such as variance inflation and localization 
(see \cite{KLS} and the references therein), together with new
ideas (introduced here) which borrow from the use of sequential 
Monte Carlo (SMC) method for inverse problems introduced in
\cite{kantas2014sequential}.

Let $\cG: \cX \to \cY$ \cls{be a continuous mapping between separable Hilbert spaces} 
$\cX$ and $\cY$.
We are interested in the inverse problem of recovering unknown $u$ from 
observation $y$ where
\begin{equation}
\label{eq:ip}
y=\cG(u)+\eta.
\end{equation}
Here $\eta$ is an observational noise. \cls{We are
typically interested in the case where the inversion is ill-posed on $\cY$, i.e.\ one of the following three conditions is violated: existence of solutions, uniqueness, stability. In the linear setting, we can think for example of a compact operator violating the stability condition.} Indeed 
throughout we assume in all our rigorous results, without comment,
that $\cY=\mathbb R^K$ for $K\in \mathbb N$ except
in a few particular places where we explicitly state that $\cY$
is infinite dimensional. 
A key role in such inverse problems is played by
the least squares functional 
\begin{equation}
\label{eq:lsq}
\Phi(u;y)=\frac12\|\Gamma^{-\frac12}(y-\cG(u))\|_{\cY}^2.
\end{equation}
Here $\Gamma>0$ normalizes the model-data misfit and often knowledge
of the covariance structure of typical noise $\eta$ is used to define $\Gamma$.

When the inverse problem is ill-posed, infimization of $\Phi$
in $\cX$ is not a well-behaved problem and some form of regularization
is required. Classical methods include Tikhonov regularization,
infimization over a compact ball in $\cX$ and truncated iterative methods
\cite{engl1996regularization}. An alternative approach is Bayesian
regularization. In Bayesian regularization
$(u,y)$ is viewed as a jointly varying random variable
in $\cX \times \cY$ and, assuming that $\eta \sim N(0,\Gamma)$ is
independent of $u \sim \mu_0$, the solution to the inverse problem
is the $\cX-$valued random variable $u|y$ distributed according to
measure
\begin{equation}
\label{eq:bip}
\mu(du)=\frac{1}{Z}\exp\bigl(-\Phi(u;y)\bigr)\mu_0(du),
\end{equation} 
where $Z$ is chosen so that $\mu$ is a probability measure:
$$Z:=\int_{\cX} \exp\bigl(-\Phi(u;y)\bigr)\mu_0(du).$$
See \cite{DashtiStuart} for details concerning the Bayesian
methodology. 

The EnKF is derived within the Bayesian framework and, through its
ensemble properties, is viewed as approximating the posterior
distribution on the random variable $u|y.$ However, except in the
large sample limit \cls{for linear problems} 
\cite{le2009large, gratton2014convergence, kwiatkowski2015convergence} 
there is little to substantiate this viewpoint; indeed the paper 
\cite{ernst2015analysis} demonstrates this quite clearly by showing that 
for nonlinear problems the large ensemble limit does not
approximate the posterior distribution. \cls{In \cite{law2016deterministic}, 
a related result is proved for the EnKF in the context of data assimilaion;
in the large ensemble size limit the EnKF is proved to converge to the mean-field 
EnKF, which provides the optimal linear estimator of the conditional mean, but
does not reproduce the filtering distribution, except in the linear Gaussian
case.} A different perspective on
the EnKF is that it constitutes a derivative-free optimization technique,
with the ensemble used as a proxy for derivative information.
This optimization viewpoint was adopted in 
\cite{iglesias2013ensemble,iglesias2014iterative} 
and is the one we take in this paper: through analysis and numerical
experiments we study the properties of the EnKF as a 
regularization technique for minimization of the least-squares misfit
functional $\Phi$ \cls{at fixed ensemble size.} 
We do, however, use the Bayesian perspective to
derive the algorithm, and to suggest variants of it.

{In section \ref{sec:V} we describe the EnKF in its basic form,
deriving the algorithm by means of ideas from 
SMC applied to the Bayesian inverse problem, together with invocation
of a Gaussian approximation.
Section \ref{sec:C} describes continuous time limits of
the method, leading to differential equations, and in
section \ref{sec:asy} we study properties of the
differential equations derived in the linear case and, in particular, 
their long-time behavior. Using this analysis we obtain clear
understanding of the sense in which the EnKF is a regularized
optimization method for $\Phi$. 
Indeed we show in section \ref{sec:C}
that the continuous time limit of the EnKF 
corresponds to a set of preconditioned gradient flows for 
the data misfit in each ensemble member. The 
common preconditioner is the empirical covariance matrix
of the ensemble which thereby couples the ensemble members together
and renders the algorithm nonlinear, even for linear inverse
problems. 
Section \ref{sec:N} is devoted to numerical studies which
illustrate the foregoing theory for linear problems, and which also
demonstrate that similar ideas apply to nonlinear problems.
In section \ref{sec:CV} we discuss variants of the basic EnKF method,
in particular the addition of variance inflation, localization or the use of
random search methods based on SMC, within the
ensemble method; all of these methods break the
invariant subspace property of the basic EnKF proved in
\cite{iglesias2013ensemble} and we explore
numerically the benefits of doing so.

\section{The EnKF for Inverse Problems}
\label{sec:V}

Here we describe how to derive the iterative EnKF as an
approximation of the SMC method for
inverse problems.  Recall the posterior distribution $\mu$
given by \eqref{eq:bip} and define the probability measures 
$\mu_n$ by, for $h=N^{-1}$,
\begin{equation}
\mu_{n}(du) \propto \exp\bigl(-nh\Phi(u;y)\bigr)\mu_0(du).
\end{equation} 
\cls{The measures $\mu_n$ are intermediate measures defined via likelihoods scaled by the step size $h=N^{-1}$.} 
It follows that $\mu_N=\mu$ the desired measure on $u|y.$ Then
\begin{equation}
\label{eq:bipi}
\mu_{n+1}(du)=\frac{1}{Z_n}\exp\bigl(-h\Phi(u;y)\bigr)\mu_n(du),
\end{equation} 
for 
$$Z_n=\int \exp(-h\cls{\Phi(u;y)})\mu_n(du).$$
Denoting by $L_n$ the nonlinear operator 
corresponding to application of Bayes' theorem to map from $\mu_n$
to $\mu_{n+1}$ we have 
\begin{equation}
\label{eq:smc1}
  {\mu_{n+1}=L_n\mu_n}\; .
  \end{equation}
\cls{We have introduced an articial discrete time dynamical system which
maps the prior $\mu_0$ into the posterior $\mu_N=\mu.$
A heuristic worthy of note is that although we look at the data $y$
at each of $N$ steps, the effective variance is amplified by $N=1/h$ at each
step, compensating for the redundant, repeated use of the data.}
The idea of SMC is to approximate $\mu_n$ by a weighted sum
of Dirac masses: given  a set of particles and weights
$\{u_{n}^{(j)},w_{n}^{(j)}\}_{j=1}^J$ the approximation takes
the form
$${\mu_{n}}\simeq \sum_{j=1}^J w_n^{(j)}\cls{\delta_{{u_n}^{(j)}}},$$
\cls{with $\delta_{{u_n}^{(j)}}$ denoting the delta-Dirac mass located at ${u_n}^{(j)}.$}
The method is defined by the mapping of the particles and weights
at time $n$ to those at time $n+1$. The method is
introduced for Bayesian inverse problems
in \cite{kantas2014sequential} where it
is used to study the  
problem of inferring the initial condition of the 
Navier-Stokes equations from data. In \cite{beskos2014sequential} 
the method is applied to the inverse problem of determining the
coefficient of a divergence form elliptic PDE from linear
functionals of the solution; furthermore the
method is also proved to converge in the large particle limit
$J \to \infty.$

In practice the SMC method can perform poorly. This happens 
when the weights $\{w_n^{(j)}\}_{j=1}^J$ degenerate in that
 one of the weights 
takes a value close to one and all others are negligible. The
EnKF aims to counteract this by always seeking an approximation
in the form
\begin{equation}
  {\mu_{n}}\simeq \frac 1J \sum_{j=1}^J \delta_{u_n}^{(j)}
  \end{equation}
and thus
\begin{equation}
  {\mu_{n+1}}\simeq \frac 1J \sum_{j=1}^J \delta_{u_{n+1}}^{(j)}\;.
  \end{equation}
The method is defined by the mapping of the particles at time $n$
into those at time $n+1$. Let $u_n=\{u_n^{(j)}\}_{j=1}^J$.
Then using equation (25) in \cite{iglesias2013ensemble} with 
$\Gamma \mapsto h^{-1}\Gamma$ 
shows that this mapping of particles has the form 
 \beq \label{eq:EnKFiter}
 u_{n+1}^{(j)}=u_{n}^{(j)}+C^{up}(u_n)(C^{pp}(u_n)+h^{-1}\Gamma)^{-1}
\bigl(y_{n+1}^{(j)}-\mathcal G(u_n^{(j)})\bigr), \quad j=1,\cdots, J,
 \eeq
 where
$$y_{n+1}^{(j)}=y+\xi^{(j)}_{n+1}$$
and, for $u=\{u^{(j)}\}_{j=1}^J$, we define the operators 
$C^{pp}$ and $C^{up}$ by
 \begin{align}
  \qquad C^{pp}(u)&=\frac{1}{J}\sum_{j=1}^J\bigl(\cG(u^{(j)})-\bG\bigr)
\otimes\bigl(\cG(u^{(j)})-\bG\bigr), \label{eq:sa}\\
  \qquad C^{up}(u)&=\frac{1}{J}\sum_{j=1}^J\bigl(u^{(j)}-\bu\bigr)
\otimes\bigl(\cG(u^{(j)})-\bG\bigr), \label{eq:sb}\\
  \qquad \bu&=\frac1J\sum_{j=1}^Ju^{(j)}, \quad \bG=\frac1J\sum_{j=1}^J
\cG(u^{(j)}). \label{eq:sc} 
  \end{align}
We will consider both the cases where $\xi^{(j)}_{n+1} \equiv 0$
and where, with respect to both $j$ and $n$, the
$\xi^{(j)}_{n+1}$ are i.i.d. random variables 
distributed according to $N(0,h^{-1}\Gamma)$.
We can unify by considering the i.i.d. family of
random variables $\xi^{(j)}_{n+1} \sim N(0,h^{-1}\Sigma)$
and focussing exclusively on the cases where $\Sigma=\Gamma$
and where $\Sigma=0.$ \cls{The theoretical results will be solely derived for the setting $\Sigma=0$, i.e. no artificial noise will be added to the observational data.}

The derivation of the EnKF as presented here relies on a Gaussian approximation\cls{,
which can be interpreted as a linearization of the nonlinear operator $L_n$ in the following way: in the large ensemble size limit, the EnKF estimate corresponds to the best linear estimator of the conditional mean. See \cite{goldstein} for a general
discussion of Bayes linear methods and \cite{law2016deterministic} for details in
the context of data assimilation.} 
Thus, besides the approximation of the measures $\mu_n$ by a $J$ 
particle Dirac measure, there is an additional \cls{uncontrolled error 
resulting from the Gaussian approximation, and analyzed 
in \cite{ernst2015analysis}.
In \cite{stordal}, the EnKF in combination with an annealing process is 
used to account for nonlinearities
 in the forward problem by weight-correcting the EnKF. For data assimilation problems, 
similar techniques can be applied to improve the performance of the EnKF in the nonlinear regime, see e.g. \cite{bocquet2014iterative} and the references therein for more details.}

\cls{In summary, except in the Gaussian} case of linear problems, there is no convergence to
$\mu_n$ as $J \to \infty$. Our focus, then, is on understanding the
properties of the algorithm for fixed $J$, as an optimization method;
we do not study the approximation of the measure $\mu.$ In this context
we also recall here the invariant subspace property of the
EnKF method, as established in \cite{iglesias2013ensemble}:

\begin{lemma}
\label{lem:sub}
If ${\cal S}$ is the linear span of $\{u_0^{(j)}\}_{j=1}^J$
then $u_n^{(j)} \in {\cal S}$ for all $(n,j) \in {\mathbb Z}^+ \times
\{1,\dots, J\}.$
\end{lemma}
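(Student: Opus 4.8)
The plan is to proceed by induction on $n$, the only substantive point being that the range of the operator $C^{up}(u_n)$ is contained in ${\cal S}$ whenever every member of the ensemble $u_n=\{u_n^{(j)}\}_{j=1}^J$ lies in ${\cal S}$.

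First I would record the elementary observation that, from the definition \eqref{eq:sb}, for every $v\in\cY$
$$C^{up}(u)\,v=\frac1J\sum_{k=1}^J\bigl\langle \cG(u^{(k)})-\bG,\,v\bigr\rangle_{\cY}\,\bigl(u^{(k)}-\bu\bigr),$$
so that $\mathrm{Range}\bigl(C^{up}(u)\bigr)\subseteq\spann\{u^{(k)}-\bu: k=1,\dots,J\}$. Consequently, if all the $u^{(k)}$ lie in a linear subspace $V\subseteq\cX$, then $\bu\in V$ as well, being an average of the $u^{(k)}$, and hence $C^{up}(u)v\in V$ for every $v\in\cY$.

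For the induction itself, the base case $n=0$ is immediate from the definition of ${\cal S}$. Assuming $u_n^{(j)}\in{\cal S}$ for all $j$, I would apply the observation above with $V={\cal S}$ and $u=u_n$: the quantity $(C^{pp}(u_n)+h^{-1}\Gamma)^{-1}\bigl(y_{n+1}^{(j)}-\cG(u_n^{(j)})\bigr)$ is simply some element of $\cY$, and $C^{up}(u_n)$ sends all of $\cY$ into ${\cal S}$, so the entire correction term in \eqref{eq:EnKFiter} lies in ${\cal S}$. Adding $u_n^{(j)}\in{\cal S}$ then gives $u_{n+1}^{(j)}\in{\cal S}$, which closes the induction.

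I do not anticipate any real obstacle: the argument uses nothing beyond the fact that the innovation enters the update only through the linear map $C^{up}(u_n)$, whose column space is spanned by the centered ensemble. In particular it is indifferent to whether $\xi^{(j)}_{n+1}\equiv 0$ or $\xi^{(j)}_{n+1}\sim N(0,h^{-1}\Sigma)$, and it makes no use of linearity of $\cG$ — the perturbed data $y_{n+1}^{(j)}$ only change which element of $\cY$ is fed into $C^{up}(u_n)$, not the range of that operator.
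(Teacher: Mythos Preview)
Your argument is correct. The paper itself does not supply a proof of this lemma; it merely recalls the result from \cite{iglesias2013ensemble}. Your induction, based on the observation that $\mathrm{Range}\bigl(C^{up}(u_n)\bigr)\subseteq\spann\{u_n^{(k)}-\bu_n\}\subseteq\spann\{u_n^{(k)}\}$, is exactly the standard way to establish the invariant subspace property, and your remarks about independence from the noise structure and from linearity of $\cG$ are also accurate.
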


\section{Continuous Time Limit}
\label{sec:C}

Here we study a continuous time limit of the EnKF methodology
as applied to inverse problems; 
this limit arises by taking the parameter $h$, appearing in the
incremental formulation \eqref{eq:bipi} of the Bayesian inverse 
problem \eqref{eq:bip}, to zero. We proceed purely formally, with no
proofs of the limiting process, 
as our main aim in this paper is to study the behavior of the
continuous time limits, not to justify taking that limit. However
we note that the invariant subspace property of Lemma \ref{lem:sub}
means that the desired limit theorems are essentially finite
dimensional and standard methods from numerical analysis may
be used to establish the limits. \cls{In the next section all the theoretical results 
are derived under the assumption that $\mathcal G$ is linear and $\Sigma=0$. However
in this section we derive the continuous time limit in a general setting, before
specifying to the linear noise-free case.}

\subsection{The Nonlinear Problem}
\label{ssec:NL}

\cls{Recall the definition of the operator $C^{up}$ given by
\eqref{eq:sb}.}
We recall that $u_n=\{u_n^{(j)}\}_{j=1}^J$,
and assume that $u_n \approx u(nh)$ in \eqref{eq:EnKFiter} in the limit
$h \to 0.$ The update step of the EnKF \eqref{eq:EnKFiter} can be written in 
the form of a time-stepping scheme:
 \begin{eqnarray*}
 u_{n+1}^{(j)}&=&u_{n}^{(j)}+hC^{up}(u_{n})(hC^{pp}(u_{n})+\Gamma)^{-1}
\bigl(y-\mathcal G(u_n^{(j)})\bigr)\\&&+hC^{up}(u_n)(hC^{pp}(u_n)+\Gamma)^{-1}
\xi_{n+1}^{(j)}\\
&=&u_{n}^{(j)}+hC^{up}(u_n)(hC^{pp}(u_n)+\Gamma)^{-1}
\bigl(y-\mathcal G(u_n^{(j)})\bigr)\\&&+h^{\frac12}C^{up}(u_n)(hC^{pp}(u_n)+\Gamma)^{-1}
\sqrt{\Sigma}\zeta_{n+1}^{(j)}\;,
\end{eqnarray*}
where $\zeta_{n+1}^{(j)}\sim\cls{\mathcal N}(0,I)$ i.i.d..
If we take the limit $h \to 0$ then this is clearly a \cls{tamed} 
Euler-Maruyama type discretization of the set of coupled It\^{o} SDEs
   \begin{eqnarray}\label{eq:ode}
   \frac{\dd u^{(j)}}{\dd t}&=&C^{up}(u)\Gamma^{-1}
(y-\mathcal G(u^{(j)}))+C^{up}\cls{(u)\Gamma^{-1}\sqrt{\Sigma} }
\frac{dW^{(j)}}{dt}.
\end{eqnarray}
Using the definition of the operator $C^{up}$ we see that 
\begin{eqnarray}\label{eq:ode2}
   \frac{\dd u^{(j)}}{\dd t}
   &=&\frac{1}{J}\sum_{k=1}^J \bigl\langle \cG(u^{(k)})-\bG,
y-\cG(u^{(j)})+\sqrt{\Sigma} \frac{dW^{(j)}}{dt}\bigr\rangle_{\Gamma}
\bigl(u^{(k)}-\bu\bigr),
  \end{eqnarray}
where
$$\bu=\frac1J \sum_{j=1}^J u^{(j)}, \quad \bG=\frac1J \sum_{j=1}^J\mathcal G(u^{(j)})$$
and
$\langle \cdot, \cdot \rangle_{\Gamma}=\langle \Gamma^{-\frac12}\cdot, \Gamma^{-\frac12}\cdot \rangle$ with $\langle \cdot, \cdot \rangle$ the
inner-product on $\cY$. 
The $W^{(j)}$ are independent cylindrical Brownian motions on $\cX$.
The construction demonstrates that, provided a solution to 
\eqref{eq:ode2} exists,
it will satisfy a generalization of the subspace property 
of Lemma \ref{lem:sub} to continuous time \cls{because the vector field
is in the linear span of the ensemble itself.}

\subsection{The Linear Noise-Free Problem}
\label{ssec:L}

In this subsection we study the linear inverse problem, 
for which $\cG(\cdot)=A\cdot$ for some $A \in {\mathcal L}(\cX,\cY)$.
We also restrict attention to the case where $\Sigma=0$.
\cls{Then the continuous time limit equation \eqref{eq:ode2} }
becomes
   \begin{equation}\label{eq:lode}
   \frac{\dd u^{(j)}}{\dd t}=\frac{1}{J}\sum_{k=1}^J \bigl\langle A(u^{(k)}-\bu),
y-Au^{(j)}\bigr\rangle_{\Gamma} \bigl(u^{(k)}-\bu\bigr), \quad j=1,\cdots, J.
  \end{equation}
For $u=\{u^{(j)}\}_{j=1}^J$ we define the empirical covariance operator 
$$\qquad C(u)=\frac{1}{J}\sum_{k=1}^J\bigl(u^{(k)}-\bu\bigr)\otimes
\bigl(u^{(k)}-\bu\bigr).$$ 
Then equation \eqref{eq:lode} may be written in the form
\begin{equation}\label{eq:lode2}   \frac{\dd u^{(j)}}{\dd t}=-C(u)D_{u}\Phi(u^{(j)};y)
\end{equation}
where, in this linear case,
\begin{equation}
\label{eq:fil}
\Phi(u;y)=\frac12\|\Gamma^{-\frac12}(y-Au)\|^2.
\end{equation}
Thus each particle performs a preconditioned 
gradient descent for $\Phi(\cdot;y)$, 
and all the individual gradient descents are coupled through the 
preconditioning of the flow by the empirical covariance $C(u).$
This gradient flow is thus nonlinear, even though the forward map 
is linear.
Using the fact that $C$ is positive semi-definite it follows that
\begin{equation}
\label{eq:apb}
\frac{\dd}{\dd t}\Phi\bigl(u(t);y\bigr) \cls{=\frac{\dd}{\dd t} \frac12\|\Gamma^{-\frac12}(y-Au)\|^2} \le 0. 
\end{equation}
This gives an a priori bound on {$\|Au(t)\|_\Gamma$},
but does not give global existence of a solution when $ \Gamma^{-\frac12} A$
is compact. However, global existence may be proved as we show in
in the next section, \cls{using the invariant subspace property.}

\section{Asymptotic Behavior in the Linear Setting}
\label{sec:asy}

In this section we study the differential equations \eqref{eq:lode}.
Although derivation of the continuous time limit suggests \cls{stopping
the integration at time $T=1$} it is nonetheless
of interest to study the dynamical system in the long-time 
asymptotic $T \to \infty$ as this sheds light on the mechanisms 
at play within the ensemble methodology, and points to possible
improvements of the algorithm.

In the first subsection \ref{ssec:nf} we study the case where the
data is noise-free.  Theorem \ref{t:1} shows existence of a solution satisfying
the subspace property; Theorem \ref{t:2} shows collapse of all ensemble
members towards their mean value at an algebraic rate; Theorem
\ref{t:3} decomposes the error, in the image space under $A$, into an error
which decays to zero algebraically (in a subspace determined by the
initial ensemble) and an error which is constant in time (in a 
complementary space); Corollary \ref{cor:rescgc} transfers these results 
to the state space of the unknown under additional assumptions. 
\cls{We assume throughout this section that the forward operator is a bounded, linear operator. The convergence analysis presented in Theorem
\ref{t:3} additionally requires the operator to be injective. For compact operators, the convergence result in the observational space does not imply convergence in the state space. However, assuming the forward operator is boundedly invertible, the generalization is straightforward. Note that this assumption is typically not fulfilled in the inverse setting, but opens up the perspective to use the EnKF as a linear solver.}
Subsection \ref{ssec:noisy} briefly discusses the noisy data case 
where the results are analogous to those in the preceding subsection.

\subsection{The Noise-Free Case}
\label{ssec:nf}

In proving the following theorems we will consider the situation
where the data $y$ is the image of a truth $\ud \in \cX$ under $A$.
It is then useful to define
\begin{align}
&e^{(j)}=u^{(j)}-\bar u, \quad r^{(j)}=u^{(j)}-u^\dagger\\
&E_{lj}=\langle Ae^{(l)},Ae^{(j)}\rangle_{\Gamma}, \quad
 R_{lj}=\langle Ar^{(l)},Ar^{(j)}\rangle_{\Gamma}, \quad
 F_{lj}=\langle Ar^{(l)},Ae^{(j)}\rangle_{\Gamma}.
\end{align} 
We view the last three items as entries of matrices $E,R$ and $F$.
The resulting matrices $E,R,F\in\mathbb R^{J \times J}$ satisfy the
following: (i) $E,R$ are symmetric; (ii) we may factorize
$E(0)=X\Lambda(0)X^T$ where $X$ is an orthogonal matrix whose columns
are the eigenvectors of $E(0)$, and $\Lambda(0)$ is a diagonal
matrix of corresponding eigenvalues; (iii) if $\one=(1,\dots, 1)^T$, 
then $E\one=F\one=0.$ 
Note that $e^{(j)}$ measures deviation of the $j^{th}$ ensemble member from
the mean of the entire ensemble, and $r^{(j)}$ measures deviation of 
the $j^{th}$ ensemble member from the truth $\ud$ underlying the data. 
The matrices $E,R$ and $F$ contain information about
these deviation quantities, when mapped forward under the operator $A$.
The following theorem establishes existence and uniqueness 
of solutions to \eqref{eq:lode}.

\begin{theorem}
\label{t:1}
Assume that $y$ is the image of a truth $\ud \in \cX$ under $A$. Let $u^{(j)}(0) \in \cX$
for $j=1,\dots, J$ and define $\cX_0$
to be the linear span of the $\{u^{(j)}(0)\}_{j=1}^J.$ Then equation \eqref{eq:lode} has a unique solution 
$u^{(j)}(\cdot) \in C([0,\infty);\cX_0)$ for $j=1,\dots, J.$
\end{theorem}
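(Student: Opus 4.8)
The plan is to read \eqref{eq:lode} as an ordinary differential equation on the finite-dimensional Hilbert space $\cX_0^J$, with inner product inherited from $\cX$, obtain a unique maximal solution by Picard--Lindel\"of, and then rule out finite-time blow-up by an a priori bound, so that the maximal interval of existence is all of $[0,\infty)$. The first step is to observe that the right-hand side of \eqref{eq:lode}, namely
$$F^{(j)}(u)=\frac1J\sum_{k=1}^J\bigl\langle A(u^{(k)}-\bu),\,y-Au^{(j)}\bigr\rangle_\Gamma\bigl(u^{(k)}-\bu\bigr),\qquad u=(u^{(1)},\dots,u^{(J)}),$$
is a polynomial (of degree three) in the entries of $u$, hence smooth and locally Lipschitz; moreover each $F^{(j)}(u)\in\mathrm{span}\{u^{(k)}-\bu\}_{k=1}^J\subseteq\cX_0$ whenever all $u^{(j)}\in\cX_0$, so the vector field is tangent to $\cX_0^J$. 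Applying Picard--Lindel\"of on $\cX_0^J$ gives a unique maximal solution $u\in C([0,T^*);\cX_0^J)$ with the prescribed initial data, and the solution stays in $\cX_0^J$; this is exactly the continuous-time subspace property announced after \eqref{eq:ode2}. It remains to prove $T^*=\infty$.

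The second step uses the gradient structure \eqref{eq:lode2}: since $C(u)=\frac1J\sum_k(u^{(k)}-\bu)\otimes(u^{(k)}-\bu)$ is positive semi-definite,
$$\frac{\dd}{\dd t}\Phi\bigl(u^{(j)}(t);y\bigr)=-\bigl\langle D_{u}\Phi(u^{(j)};y),\,C(u)\,D_{u}\Phi(u^{(j)};y)\bigr\rangle\le 0,$$
so $\Phi(u^{(j)}(t);y)\le\Phi(u^{(j)}(0);y)$ on $[0,T^*)$, which bounds $\|\Gamma^{-\frac12}(y-Au^{(j)}(t))\|$ and hence $\|Au^{(j)}(t)\|_\Gamma$ uniformly in $t$. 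In particular the scalar coefficients $\langle A(u^{(k)}-\bu),\,y-Au^{(j)}\rangle_\Gamma$ appearing in $F^{(j)}$ are bounded by some $M$ independent of $t\in[0,T^*)$.

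The delicate point — the one place where the argument is not entirely routine — is that this a priori bound controls only the images $Au^{(j)}$, not the states $u^{(j)}$ themselves, so it does not by itself preclude blow-up in $\cX_0$ (and indeed $\Gamma^{-\frac12}A$ may fail to be boundedly invertible, which is precisely the setting of interest). The remedy is the algebraic form of the vector field: in $F^{(j)}$ the bounded scalars multiply the vectors $u^{(k)}-\bu$, which enter only linearly, so $\|F^{(j)}(u(t))\|\le M\max_k\|u^{(k)}(t)-\bu(t)\|\le 2M\max_k\|u^{(k)}(t)\|$ on $[0,T^*)$ — the vector field grows at most linearly in the state along the trajectory. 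Hence $\rho(t):=\max_j\|u^{(j)}(t)\|^2$ satisfies $\dot\rho\le 4M\rho$ in the a.e.\ sense, whence $\rho(t)\le\rho(0)e^{4Mt}$, so the trajectory remains bounded on every bounded subinterval of $[0,T^*)$, which forces $T^*=\infty$. Thus $u^{(j)}\in C([0,\infty);\cX_0)$ for $j=1,\dots,J$. I note finally that the hypothesis $y=A\ud$ is not actually needed for this theorem — the same proof works for arbitrary $y\in\cY$ — and is imposed only for consistency with Theorems \ref{t:2}, \ref{t:3} and Corollary \ref{cor:rescgc}, where the residuals $r^{(j)}=u^{(j)}-\ud$ enter.
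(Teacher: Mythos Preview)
Your proof is correct and structurally identical to the paper's: both reduce \eqref{eq:lode} to a locally Lipschitz ODE on the finite-dimensional space $\cX_0^J$, obtain a unique local solution by Picard--Lindel\"of, bound the scalar coefficients in the vector field uniformly in time, observe that the vector field then grows at most linearly in the state along the trajectory, and conclude at-most-exponential growth and hence global existence. The only difference lies in how the scalar bound is obtained. The paper writes $\dot u^{(j)}=-\frac{1}{J}\sum_k F_{jk}e^{(k)}$ and invokes Lemma~\ref{l:EF}, which derives the closed matrix ODEs $\dot E=-\tfrac{2}{J}E^2$, $\dot F=-\tfrac{2}{J}FE$, $\dot R=-\tfrac{2}{J}FF^\top$, solves the $E$-equation explicitly, shows ${\rm Tr}(R)$ is non-increasing, and then bounds $|F_{jk}|$ by Cauchy--Schwarz through $R_{jj}$ and $E_{kk}$. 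You instead use the Lyapunov inequality \eqref{eq:apb} directly to bound each $\|y-Au^{(j)}\|_\Gamma$ and hence each $\|Au^{(j)}\|_\Gamma$. Since ${\rm Tr}(R)=2\sum_j\Phi(u^{(j)};y)$ when $y=A\ud$, the two energy arguments are really the same statement in different guises; yours is the more economical route for Theorem~\ref{t:1} alone, while the paper's detour through Lemma~\ref{l:EF} pays off later because the explicit solution of the $E$-equation is what drives the decay rates in Theorems~\ref{t:2} and~\ref{t:3}. Your closing remark that the hypothesis $y=A\ud$ is not actually used in this particular argument is correct.
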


\begin{proof}
It follows from \eqref{eq:lode} that
\begin{equation}
\label{eq:u}
\frac{\dd  u^{(j)}}{\dd t}= -\frac{1}{J}\sum_{k=1}^J F_{jk}e^{(k)}
\end{equation}

The right-hand side of equation \eqref{eq:lode2} is locally
Lipschitz in $u$ as a mapping from $\cX_0$ to $\cX_0$.
Thus local existence of a solution in $C([0,T);\cX_0)$ holds for some $T>0$, 
since $\cX_0$ is finite dimensional. 
Thus it suffices to show that the solution
does not blow-up in finite time. To this end we prove in Lemma
\ref{l:EF},\cls{ which is presented in the Appendix \ref{appendix},} that the matrices $E(t)$ and $F(t)$ are bounded by
a constant depending on initial conditions, but not on time $t$.
Using the global bound on $F$ it follows from \eqref{eq:u} that $u$
is globally bounded by a constant depending on initial conditions,
and growing exponentially with $t$. Global existence for $u$
follows and the theorem is complete. 
\end{proof}

The following theorem shows that all ensemble members collapse towards
their mean value at an algebraic rate; and it demonstrates that
the collapse slows down linearly as ensemble size increases. 

\begin{theorem}
\label{t:2}
Assume that $y$ is the image of a truth $\ud \in \cX$ under $A$. Let $u^{(j)}(0) \in \cX$
for $j=1,\dots, J$. Then the matrix valued quantity $E(t)$ converges to $0$ for $t \to \infty $ and, indeed \cls{$\|E(t)\|={\mathcal O}(Jt^{-1}).$} 
\end{theorem}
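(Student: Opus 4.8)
The plan is to show that $E(t)$ satisfies a closed, finite-dimensional matrix Riccati equation, which can then be solved essentially explicitly by diagonalization.

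First I would compute the evolution of the centered ensemble members $e^{(j)}=u^{(j)}-\bu$. Averaging \eqref{eq:lode} over $j$ gives
\[
\frac{\dd \bu}{\dd t}=\frac1J\sum_{k=1}^J\bigl\langle Ae^{(k)},y-A\bu\bigr\rangle_{\Gamma}e^{(k)},
\]
and subtracting this from \eqref{eq:lode} the $y$-dependent contributions cancel, leaving
\[
\frac{\dd e^{(j)}}{\dd t}=-\frac1J\sum_{k=1}^J E_{kj}\,e^{(k)}.
\]
Differentiating $E_{lj}=\langle Ae^{(l)},Ae^{(j)}\rangle_{\Gamma}$, substituting this identity, and using the symmetry of $E$ to combine the two resulting sums, I obtain the closed equation
\[
\frac{\dd E}{\dd t}=-\frac2J E^2 .
\]
Global existence of $E(t)$ on $[0,\infty)$ is inherited from Theorem \ref{t:1} (equivalently, it is immediate because the eigenvalues of $E$ turn out to be non-increasing, so no blow-up can occur).

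Next I would exploit that $E(t)$ is a Gram matrix, hence symmetric positive semi-definite, together with the factorization $E(0)=X\Lambda(0)X^T$ recorded before the theorem. Since $X\Lambda(t)X^T$, with $\Lambda$ solving $\dot\Lambda=-\tfrac2J\Lambda^2$, solves the matrix ODE with the same initial datum, uniqueness forces $E(t)=X\Lambda(t)X^T$ for all $t\ge0$: the eigenvectors are frozen and each eigenvalue obeys the scalar Bernoulli equation $\dot\lambda_i=-\tfrac2J\lambda_i^2$. Integrating this gives $\lambda_i(t)=0$ whenever $\lambda_i(0)=0$, and otherwise
\[
\lambda_i(t)=\Bigl(\lambda_i(0)^{-1}+\tfrac{2t}{J}\Bigr)^{-1}\le \frac{J}{2t}.
\]
Therefore $\|E(t)\|=\max_i\lambda_i(t)\le J/(2t)$, which yields both $E(t)\to0$ as $t\to\infty$ and the stated rate $\mathcal{O}(Jt^{-1})$.

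The only mildly delicate points are verifying that the $y$-dependent drift genuinely cancels in the equation for $e^{(j)}$, and checking that the time derivative of $E_{lj}$ collapses to exactly $-\tfrac2J(E^2)_{lj}$ with no residual terms; both rely on the precise structure above (and on the symmetry $E=E^T$), but are short computations. Everything after the reduction to the scalar equations $\dot\lambda_i=-\tfrac2J\lambda_i^2$ is elementary, so I expect the main conceptual content to be the derivation of the closed Riccati equation for $E$.
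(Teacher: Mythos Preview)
Your proposal is correct and follows essentially the same route as the paper: derive the closed matrix Riccati equation $\dot E=-\tfrac{2}{J}E^2$ from the evolution of the centered particles $e^{(j)}$, diagonalize via $E(0)=X\Lambda(0)X^{\top}$, and solve the resulting scalar Bernoulli equations for the eigenvalues to obtain the explicit bound $\lambda_i(t)\le J/(2t)$. The paper merely outsources the derivation of $\dot E=-\tfrac{2}{J}E^2$ and of $\dot e^{(j)}$ to appendix lemmas, whereas you carry out those computations inline; the argument is otherwise identical.
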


\begin{proof}
Lemma \ref{l:EF},\cls{ which is presented in the Appendix \ref{appendix},} shows that the quantity $E(t)$ satisfies the differential equation
\begin{equation}
\frac{\dd  }{\dd t}E=-\frac2JE^2
\end{equation}
with initial condition $E(0)=X\Lambda(0)X^\top$, $\Lambda(0)=\mbox\{\lambda_0^{(1)}, \ldots , \lambda_0^{(J)} \}$ and orthogonal $X$. Using the eigensystem $X$ gives the solution $E(t)=X\Lambda(t)X^\top$, where the entries of the diagonal matrix $\Lambda(t)$ are given by $({\frac 2J t+\frac{1}{\lambda_0^{(j)}}})^{-1}$ if $\lambda_0^{(j)}\neq 0$ and $0$ otherwise. Then, the claim immediately follows from the explicit form of the solution $E(t)$.
\end{proof}
We are now interested in the long-time behavior of the residuals with
respect to the truth. Theorem \ref{t:3} characterizes the relation between the approximation quality of the initial ensemble and the convergence behavior of the residuals.

\begin{theorem}
\label{t:3}
Assume that $y$ is the image of a truth $\ud \in \cX$ under $A$ {and the forward operator $A$ is one-to-one}. 
Let {$\Yl$} denote the linear span of the $\{{A}e^{(j)}(0)\}_{j=1}^J$
and let {$\Yp$} denote the orthogonal complement of {$\Yl$}
in {$\cY$} 
with respect to the inner product $\langle\cdot,\cdot\rangle_\Gamma$
and assume that the initial ensemble members are chosen
so that {$\Yl$} has the maximal dimension {$\min\{J-1,\dim({\mathcal{Y}})\}.$}  
Then ${A}r^{(j)}(t)$ may be decomposed uniquely
as {$A\rl(t)+A\rp(t)$ with $A\rl \in \Yl$ and $A\rp \in \Yp$}.
Furthermore, for all $j \in \{1,\cdots, J\}$, 
${A\rl}(t) \to 0$ as $t \to \infty$ and, for
all $j \in \{1,\cdots, J\}$ and $t \ge 0$,
${A}\rp(t)={A}\rp(0)={Ar^{(1)}_{\perp}}(0).$
\end{theorem}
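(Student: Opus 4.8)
The plan is to work entirely in the observation space $\cY$ under the map $A$, exploiting the fact that the dynamics \eqref{eq:lode} close up on the quantities $E$, $R$ and $F$. First I would derive the differential equations satisfied by $Ar^{(j)}$ and $Ae^{(j)}$. Applying $A$ to \eqref{eq:u} gives $\frac{\dd}{\dd t}Ar^{(j)}=-\frac1J\sum_{k=1}^J F_{jk}Ae^{(k)}$, and since $e^{(j)}=r^{(j)}-\bar r$ and $Ae^{(j)}$ also obeys the same type of equation (with $E$ in place of $F$), both $Ar^{(j)}(t)$ and $Ae^{(j)}(t)$ remain, for all $t$, in the span of $\{Ae^{(k)}(0)\}_{k=1}^J$ \emph{shifted} appropriately: more precisely $Ae^{(j)}(t)\in\Yl$ for all $t$ because $\frac{\dd}{\dd t}Ae^{(j)}$ is a linear combination of the $Ae^{(k)}$, so $\Yl$ is an invariant subspace of the $Ae$-dynamics; and $\frac{\dd}{\dd t}Ar^{(j)}$ is a linear combination of the $Ae^{(k)}(t)\in\Yl$, hence $\frac{\dd}{\dd t}Ar^{(j)}(t)\in\Yl$ for all $t$.

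From this invariance the decomposition is immediate: writing $Ar^{(j)}(t)=A\rl(t)+A\rp(t)$ with respect to the $\langle\cdot,\cdot\rangle_\Gamma$-orthogonal splitting $\cY=\Yl\oplus\Yp$, the component $A\rp(t)$ has zero time derivative, so $A\rp(t)=A\rp(0)$ for all $t$ and all $j$. To see that $A\rp(j)(0)$ is independent of $j$, note that $Ar^{(j)}(0)-Ar^{(1)}(0)=A(u^{(j)}(0)-u^{(1)}(0))$, and $u^{(j)}(0)-u^{(1)}(0)=e^{(j)}(0)-e^{(1)}(0)$ lies in the span of the $e^{(k)}(0)$, so its image under $A$ lies in $\Yl$; hence the $\Yp$-components of all the $Ar^{(j)}(0)$ coincide, giving $A\rp(t)=Ar^{(1)}_\perp(0)$.

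It remains to prove $A\rl(t)\to0$. Here I would use Theorem \ref{t:2}: since $A\rl$ evolves within the finite-dimensional space $\Yl$, pick the orthonormal-in-$\langle\cdot,\cdot\rangle_\Gamma$ frame of $\Yl$ and track the scalar quantities $R_{jk}=\langle Ar^{(j)},Ar^{(k)}\rangle_\Gamma$, or rather the reduced versions involving only the $\Yl$-parts. From \eqref{eq:u} one gets $\frac{\dd}{\dd t}R=-\frac1J(FE^\top + EF^\top)$ modulo the constant $\Yp$ contributions, and $\frac{\dd}{\dd t}F=-\frac1J(FE+\text{something})$; the key structural point (already extracted in Lemma \ref{l:EF}) is that $F$ is controlled by $E$, and $E(t)={\mathcal O}(Jt^{-1})\to0$. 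I would show that the quantity $\sum_j \|A\rl(j)(t)\|_\Gamma^2$ (the trace of the reduced $R$) is monotone non-increasing by \eqref{eq:apb} restricted to the $\Yl$ part, bounded below, hence convergent; and then argue that its limit must be zero because otherwise the persistent misfit in $\Yl$ would, through the coupling, force $E$ not to decay — contradicting Theorem \ref{t:2}. Concretely, stationary points of the reduced dynamics with $E\equiv0$ require each $A\rl(j)$ to be $\langle\cdot,\cdot\rangle_\Gamma$-orthogonal to $\Yl$, i.e.\ zero.

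The main obstacle I anticipate is the last step: going from ``$\sum_j\|A\rl(j)(t)\|_\Gamma^2$ converges and $E(t)\to0$'' to ``the limit is exactly $0$''. Monotonicity plus the decay of $E$ gives a candidate LaSalle-type argument, but one must be careful that the maximal-dimension hypothesis on $\Yl$ is what guarantees the only invariant configuration compatible with $E=0$ is $A\rl=0$ for every member (this is where $\dim\Yl=\min\{J-1,\dim\cY\}$ and injectivity of $A$ enter). I would likely prove it by writing $A\rl(j)(t)=\sum_k c_{jk}(t)\,Ae^{(k)}(0)$ in a fixed basis, deriving an ODE for the coefficient vector driven by $E$ and $F$, and showing the coefficients satisfy a Gronwall-type estimate forcing decay once $\|E(t)\|={\mathcal O}(Jt^{-1})$ is inserted; the algebraic, non-integrable rate of $E$ means one cannot simply invoke exponential stability and must instead track the precise $t^{-1}$ behavior, which is the delicate part of the calculation.
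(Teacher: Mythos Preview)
Your first two paragraphs are correct and coincide with the paper's argument (which packages the same invariance facts into Lemma~\ref{l:er}): the $\Yp$-component of $Ar^{(j)}$ is frozen and $j$-independent for exactly the reason you give.

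The genuine gap is in the convergence $A\rl(t)\to 0$. The LaSalle heuristic of your third paragraph fails: once $E=0$ one has $Ae^{(k)}=0$ for every $k$, hence $\frac{\dd}{\dd t}Ar^{(j)}=-\frac1J\sum_k F_{jk}Ae^{(k)}=0$ \emph{regardless} of the value of $A\rl^{(j)}$. Every configuration with collapsed ensemble is stationary, so the invariant set compatible with $E\equiv0$ is all of $\Yl$, not just $\{A\rl=0\}$. Monotonicity of $\sum_j\|A\rl^{(j)}\|_\Gamma^2$ together with $E(t)\to0$ therefore cannot, by itself, force the limit to vanish.

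Your fourth paragraph is closer, but the decisive mechanism is still missing. The paper expands $A\rl^{(j)}(t)=\sum_k\alpha_k\,Ae^{(k)}(t)$ in the \emph{moving} frame (not the fixed frame $\{Ae^{(k)}(0)\}$ you propose), with $\alpha$ chosen orthogonal to the null space of $E(t)$. Then $\|A\rl^{(j)}\|_\Gamma^2=\alpha^\top E(t)\alpha$ and
\[
\frac12\frac{\dd}{\dd t}\|Ar^{(j)}\|_\Gamma^2=-\frac1J\sum_k\langle Ar^{(j)},Ae^{(k)}\rangle_\Gamma^2=-\frac1J|E(t)\alpha|^2\le-\frac{\lambda_{\min}(t)}{J}\,\alpha^\top E(t)\alpha,
\]
the last inequality being the spectral bound $|E\alpha|^2\ge\lambda_{\min}(t)\,\alpha^\top E\alpha$ on the range of $E(t)$. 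With $\lambda_{\min}(t)=\bigl(\tfrac2J t+\tfrac{1}{\lambda_0^{\min}}\bigr)^{-1}$ from Theorem~\ref{t:2}, Gronwall gives $\|A\rl^{(j)}(t)\|_\Gamma^2\le C\lambda_{\min}(t)\to0$. The maximal-dimension hypothesis is exactly what ensures $\lambda_{\min}(t)>0$ on the subspace where $\alpha$ lives; without it this spectral lower bound collapses. Your instinct that the precise $t^{-1}$ rate matters is right, but note it works \emph{for} you: $\int^\infty\lambda_{\min}(s)\,\dd s=\infty$ is precisely what drives the Gronwall factor to zero, yielding algebraic (not exponential) decay.
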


\begin{proof}
Lemma \ref{l:er},\cls{ which is presented in the Appendix \ref{appendix},} shows that the matrix $L$, the linear transformation
which determines how to write $\{Ae^{(j)}(t)\}_{j=1}^J$ in terms 
of the coordinates $\{Ae^{(j)}(0)\}_{j=1}^J$, is invertible for all $t \ge 0$
and hence that the linear span of the $\{{A}e^{(j)}(t)\}_{j=1}^J$ is equal
to $\Yl$ for all $t \ge 0.$ Lemma \ref{l:er} also shows that
${A}r^{(j)}(t)$ may be decomposed uniquely
as ${A}\rl(t)+{A}\rp(t)$ with ${A}\rl \in \Yl$ and ${A}\rp \in \Yp$ and that
${A}\rp(t)={A}\rp(0)$ for all $t \ge 0.$ It thus remains to show that
${A}\rl(t)$ converges to zero as $t \to \infty.$

From Lemma \ref{l:er} we know that we may write
\begin{equation}
\label{eq:key}
{A}r^{(j)}(t)=\sum_{k=1}^J\alpha_k {A}e^{(k)}(t)+{Ar^{(1)}_{\perp}},
\end{equation}
for some ($j$-dependent)
coefficients $\alpha=(\alpha_1, \dots, \alpha_J)^T \in \mathbb R^J$. 
Furthermore Lemma \ref{l:er} shows that if {we initially choose $\alpha$ to be
orthogonal to the eigenvectors $x^{(k)},\ k=1,\ldots, J-\tilde J$ of $E$ with corresponding eigenvalues $\lambda^{(k)}(t)=0,\ k=1,\ldots, J-\tilde J$,} then this property will be preserved
for all time. {This is since we have $QL^{-1}x^{(k)}=Qx^{(k)}=0,\ k=1,\ldots, J-\tilde J$ and 
the matrix $\Upsilon$ with $j^{th}$ row given by $\alpha$ satisfies 
$Q=\Upsilon L$ so that $\Upsilon x^{(k)} =0,\ k=1,\ldots, J-\tilde J.$ Finally we choose 
coordinates in which $A\rp(0)$ is orthogonal to $\Yl.$}

Define the seminorms
  \begin{eqnarray*}
|\alpha|_1^2&:=&|E^{1/2}\alpha|^2\\
|\alpha|_2^2&:=&|E\alpha|^2\;.
 \end{eqnarray*}
Note that that these norms are time-dependent because $E$ is.
On the subspace of $\bbR^J$ orthogonal to {$\spann\{x^{(1)},\ldots,x^{(J-\tilde J)}\}$},
   \begin{equation}
\label{eq:deen1}
|\alpha|_2^2 \ge \lambda_{\min}(t)|\alpha|_1^2\;,
  \end{equation}
where $\lambda_{\min}(t)=(\frac 2J t+\frac{1}{\lambda_0^{\min}})^{-1}$ is the minimal positive eigenvalue of $E$, see \eqref{eq:odeenslambda3}. 
Furthermore, for the Euclidean norm $|\cdot|$, we have
    \begin{equation}
\label{eq:deen2}
|\alpha|_1^2 \ge \lambda_{\min}(t)|\alpha|^2\;
  \end{equation}
on the subspace of $\bbR^J$ orthogonal to {$\spann\{x^{(1)},\ldots,x^{(J-\tilde J)}\}$}.

Now note that the following differential equation holds 
for the quantity $\|Ar^{(j)}\|_{\Gamma}^2:$
 \begin{equation}
\label{eq:deen}
\frac 12 \frac{\dd  }{\dd t} \|Ar^{(j)}\|_{\Gamma}^2= -\frac1J \sum_{r=1}^J \langle Ar^{(j)},Ae^{(r)}\rangle_{\Gamma}\langle Ar^{(j)},Ae^{(r)}\rangle_{\Gamma}.
\end{equation}
We also have
     \begin{eqnarray*}
     \sum_{r=1}^J\langle Ar^{(j)}, Ae^{(r)}\rangle_{\Gamma}^2&=&\sum_{r=1}^J\langle \sum_{k=1}^J\alpha_kAe^{(k)}, Ae^{(r)}\rangle_{\Gamma}\langle \sum_{l=1}^J\alpha_lAe^{(l)}, Ae^{(r)}\rangle_{\Gamma}\\
     &=&\sum_{k=1}^J\sum_{l=1}^J \alpha_k \left(\sum_{r=1}^JE_{kr}E_{lr}\right)\alpha_l\\
     &=&|\alpha|_2^2\;.
        \end{eqnarray*}
Using \eqref{eq:key}, the norm of the residuals can be expressed in
terms of the coefficient vector of the residuals as follows:  
\begin{eqnarray*}
\|Ar^{(j)}\|_{\Gamma}^2&=&\langle \sum_{k=1}^J\alpha_kAe^{(k)}+A\rl(0), 
\sum_{l=1}^J\alpha_lAe^{(l)}+A\rl(0)\rangle_{\Gamma}\\
                                &=&\sum_{k=1}^J\sum_{l=1}^J \alpha_k E_{kl}\alpha_l+\|A\rl(0)\|_{\Gamma}^2\\
&=&|\alpha|^2_1+\|A\rl(0)\|_{\Gamma}^2\;.
 \end{eqnarray*}

Thus, the coefficient vector satisfies the following differential equation
  \begin{eqnarray*}
\frac 12 \frac{\dd  }{\dd t} |\alpha|_1^2&=&- \frac1J |\alpha|_2^2\le-\frac{\lambda_{\min}(t)}{J}|\alpha|_1^2\;,
 \end{eqnarray*}
 which gives
   \begin{eqnarray*}
\frac{1}{|\alpha|_1^2} \frac{\dd  }{\dd t} |\alpha|_1^2&\le&- \frac2J \left(\frac 2J t+\frac{1}{\lambda_0^{\min}}\right)^{-1}
 \end{eqnarray*}
 Hence, using that $
\ln|\alpha(t)|_1^2-\ln|\alpha(0)|_1^2\le \ln \frac{1}{\lambda_0^{\min}}-\ln \left(\frac 2J t+\frac{1}{\lambda_0^{\min}}\right)
 $, the coefficient vector can be bounded by
     \begin{eqnarray*}
|\alpha(t)|_1^2\le \frac{1}{\lambda_0^{\min}}|\alpha(0)|_1^2 \lambda_{\min}(t)\;.
 \end{eqnarray*}
 In the Euclidean norm, we have
 \begin{eqnarray*}
\lambda_{\min}(t)|\alpha(t)|^2\le \frac{1}{\lambda_0^{\min}}|\alpha(0)|_1^2 \lambda_{\min}(t)\;
 \end{eqnarray*}
 and thus
       \begin{eqnarray*}
|\alpha(t)|^2\le \frac{1}{\lambda_0^{\min}}|\alpha(0)|_1^2 \le \frac{\lambda_0^{(J)}}{\lambda_0^{\min}}\|\alpha(0)\|^2 
\;.
 \end{eqnarray*}
Since $\alpha$ is bounded in time, and since the $e^{(k)} \to 0$ by
Lemma \ref{l:er}, the desired result follows from \eqref{eq:key}.
\end{proof}

Corollary \ref{cor:rescgc} generalizes the convergence results of the preceding theorem to the infinite dimensional setting, i.e. to the case $\dim \cY=\infty$ \cls{under the additional assumption that the forward operator $A$ is boundedly invertible. This implies that $A$ cannot be a compact operator. However, this result allows to use the EnKF as a linear solver, since the convergence results can be transferred to the state space under the stricter assumptions on $A$.} 
\begin{corollary}\label{cor:rescgc}
Assume that $y$ is the image of a truth $\ud \in \cX$ under $A$, the forward operator $A$ is boundedly invertible and the initial ensemble is chosen such that the subspace $\cX_0=\spann\{e^{(1)},\ldots,e^{(J)}\}$ has maximal dimension $J-1$. Then, there exists a unique decomposition of the residual $r^{(j)}(t)=\rl(t)+\rp(t)$ with $\rl \in \cX_0$ and $\rp \in \cX_1$, where $\cX=\cX_0\bigoplus \cX_1$. The two subspaces $\cX_0$ and $\cX_1$ are orthogonal 
with respect to the inner product $\langle\cdot,\cdot\rangle_{\Xp}:=\langle \Gamma^{-\frac12} A\cdot,\Gamma^{-\frac12} A\cdot \rangle $. Then,  ${\rl}(t) \to 0$ as $t \to \infty$ and $\rp(t)=\rp(0)=r^{(1)}_{\perp}(0).$
\end{corollary}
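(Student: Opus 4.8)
We sketch the argument; it consists of pulling the conclusions of Theorem~\ref{t:3} back from the observation space $\cY$ to the state space $\cX$ through the inverse of $A$.

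\emph{Step 1: set-up.} Since $A$ is boundedly invertible, $A^{-1}\in\mathcal L(\cY,\cX)$ by the open mapping theorem; in particular $A$ is not compact, for otherwise $\mathrm{id}_{\cX}=A^{-1}A$ would be compact, which is impossible as $\dim\cX=\infty$. The statement of Theorem~\ref{t:3} already allows $\dim\cY=\infty$ (its proof, via Lemmas~\ref{l:EF} and~\ref{l:er}, takes place in the finite-dimensional subspace $\Yl$ of dimension $\le J-1$, which is closed, and in the $J\times J$ matrices $E,R,F$; nothing there requires $\dim\cY<\infty$). Its hypotheses are met here: $A$ injective holds since $A$ is boundedly invertible, and because $A$ is a bijection it carries $\cX_0=\spann\{e^{(1)}(0),\ldots,e^{(J)}(0)\}$ isomorphically onto $\Yl=\spann\{Ae^{(1)}(0),\ldots,Ae^{(J)}(0)\}$, so $\dim\cX_0=J-1$ is equivalent to $\dim\Yl=J-1=\min\{J-1,\dim\cY\}$, the maximal-dimension assumption of Theorem~\ref{t:3}. (By Lemma~\ref{l:er} the span of $\{Ae^{(j)}(t)\}$ equals $\Yl$ for all $t$, hence, pulling back by $A^{-1}$, $\spann\{e^{(j)}(t)\}=\cX_0$ for all $t$.)

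\emph{Step 2: the state-space splitting.} Define $\cX_0$ as above (equivalently $A^{-1}\Yl$) and $\cX_1:=A^{-1}\Yp$. Applying the bijection $A^{-1}$ to $\cY=\Yl\oplus\Yp$ gives $\cX=\cX_0\oplus\cX_1$ with both summands closed. For the orthogonality with respect to $\langle\cdot,\cdot\rangle_{\Xp}$, observe that for $v,w\in\cX$,
$$\langle v,w\rangle_{\Xp}=\langle\Gamma^{-\frac12}Av,\Gamma^{-\frac12}Aw\rangle=\langle Av,Aw\rangle_{\Gamma},$$
so if $v\in\cX_0$ and $w\in\cX_1$ then $Av\in\Yl$ and $Aw\in\Yp$ are $\langle\cdot,\cdot\rangle_{\Gamma}$-orthogonal by definition of $\Yp$, whence $\langle v,w\rangle_{\Xp}=0$. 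Injectivity of $A$ together with $\Gamma>0$ also shows $\langle\cdot,\cdot\rangle_{\Xp}$ is a genuine inner product, and bounded invertibility of $A$ (and boundedness of $\Gamma$) makes its norm equivalent to $\|\cdot\|_{\cX}$.

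\emph{Step 3: transfer of the decomposition and asymptotics.} By Theorem~\ref{t:3}, $Ar^{(j)}(t)=A\rl(t)+A\rp(t)$ with $A\rl(t)\in\Yl$ and $A\rp(t)\in\Yp$. Setting $\rl(t):=A^{-1}\bigl(A\rl(t)\bigr)\in\cX_0$ and $\rp(t):=A^{-1}\bigl(A\rp(t)\bigr)\in\cX_1$ yields $r^{(j)}(t)=A^{-1}Ar^{(j)}(t)=\rl(t)+\rp(t)$, the decomposition being unique since the sum $\cX_0\oplus\cX_1$ is direct (and it is consistent with the $\cY$-decomposition, since $A$ maps $\cX_0,\cX_1$ onto $\Yl,\Yp$). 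Then $\|\rl(t)\|_{\Xp}=\|\Gamma^{-\frac12}A\rl(t)\|=\|A\rl(t)\|_{\Gamma}\to 0$ as $t\to\infty$ by Theorem~\ref{t:3}, hence $\rl(t)\to 0$ in $\cX$ by norm equivalence; while $\rp(t)=A^{-1}\bigl(A\rp(t)\bigr)=A^{-1}\bigl(A\rp(0)\bigr)=\rp(0)$ for all $t\ge 0$, and since $A\rp(0)=Ar^{(1)}_{\perp}(0)$ is the same for every $j$, also $\rp(0)=r^{(1)}_{\perp}(0)$.

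I do not anticipate a real obstacle. The only points needing a little care are that the maximal-dimension hypothesis of Theorem~\ref{t:3} is equivalent to the one imposed on $\cX_0$ (which follows from injectivity of $A$) and that $\langle\cdot,\cdot\rangle_{\Xp}$ is a bona fide inner product inducing a norm equivalent to $\|\cdot\|_{\cX}$ (which follows from bounded invertibility of $A$ together with $\Gamma>0$); everything else is a routine push-forward of Theorem~\ref{t:3} along the isomorphism $A$.
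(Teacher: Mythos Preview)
Your proposal is correct and follows essentially the same approach as the paper: the paper's own proof is a two-sentence sketch observing that bounded invertibility of $A$ makes its range closed and allows one to transfer the conclusions of Theorem~\ref{t:3} from the observation space to the state space, which is precisely what you carry out in detail via pullback along $A^{-1}$. Your explicit verification that the maximal-dimension hypotheses match under injectivity of $A$, and that $\langle\cdot,\cdot\rangle_{\Xp}$ induces an equivalent norm, fills in points the paper leaves implicit.
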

\begin{proof}
The assumption that the forward operator is boundedly invertible ensures that the range of the operator is closed. \cls{Furthermore, the invertibility of the operator $A$ allows to transfer results from the observational space directly to the state space.} Thus, the same arguments as in the proof of Theorem \ref{t:3} prove the claim.
\end{proof}

\subsection{Noisy Observational Data}
\label{ssec:noisy}

Very similar analyses to those in the previous subsection
may be carried out in the case where the observational data $y^\dagger$ 
is polluted by additive noise $\eta^\dagger \in \mathbb R^K:$ 
\[
y^\dagger=Au^\dagger+\eta^\dagger\;.
\] 
Global existence of solutions, and ensemble collapse,
follow similarly to the proofs of Theorems \ref{t:1}
and \ref{t:2}.
Theorem \ref{t:3} is more complex to generalize since
it is not the mapped residual $Ar^{(j)}$ which is decomposed into
a space where it decays to zero and a space where it
remains constant, but rather the quantity $\vartheta^{(j)}=Ar^{(j)}-\eta^{\dagger}.$
Driving this quantity to zero of course leads to over-fitting. 
Furthermore, the generalization to the infinite dimensional setting 
as presented in Corollary \ref{cor:rescgc} is no longer valid, 
since the noise \cls{may} take the data out of the 
range of the forward operator.

\section{Numerical Results}
\label{sec:N}

In this section we present numerical experiments both with and without
data, illustrating the theory of the previous section for the linear
inverse problem. We also study a nonlinear groundwater flow inverse 
problem, demonstrating that the theory in the linear problem provides 
useful insight for the nonlinear problem.

\subsection{Linear Forward Model}
We consider the one dimensional elliptic equation
 \begin{equation}\label{eq:forward}
    -\frac{\rd^2 p}{\rd x^2}+p=u \quad \mbox{in } D:=(0,\pi)\, , \ p=0  \quad \mbox{in } \partial D\; ,
\end{equation}
where the uncertainty-to-observation operator is given by $\mathcal G=\mathcal O \circ G=\mathcal O \circ A^{-1}$ with $A=-\frac{\rd^2 }{\rd x^2}+id$ and $D(A)=H^2(I) \cap H^1_0$. \cls{The observation operator $\mathcal O$ consists of $K=2^{4}-1$ system responses at $K$ equispaced observation points at $x_k=\frac{k}{2^{4}}, k=1,\ldots,2^{4}-1$, $o_k(\cdot):=\delta(\cdot -x_k)$, i.e. $\mathcal (O(\cdot))_k=o_k(\cdot)$. }
The forward problem \eqref{eq:forward} is \cls{solved numerically} by a FEM using continuous, piecewise linear ansatz functions on a uniform mesh with meshwidth $h=2^{-8}$ (the spatial discretization leads to a discretization of $u$, i.e. $u \in \mathbb R^{2^8-1}$).

The goal of \cls{the} computation is to recover the unknown data $u$ from noisy observations
\begin{eqnarray}\label{eq:modelne}
y^\dagger&=&p+\eta =\mathcal O A^{-1}u^\dagger+\eta\;.
\end{eqnarray}
The measurement noise is chosen to be normally distributed, $\eta \sim \mathcal N(0,\gamma I )$, $\gamma \in \mathbb R\cls{,\ \gamma>0}, \ I \in \mathbb R^{K \times K}$.
The initial ensemble of particles is chosen to be based on the
eigenvalues and eigenfunctions $\{\lambda_j,z_j\}_{j \in \mathbb N}$ 
of the covariance operator $C_0$. Here $C_0=\beta(A-id)^{-1}$, $\beta=10$. 
Although we do not pursue the Bayesian interpretation of the EnKF,
the reader interested in the Bayesian perspective may think of the
prior as being $\mu_0=N(0,C_0)$, \cls{which is a Brownian bridge}. 
In all our experiments we set $u^{(j)}(0)=\sqrt{\lambda_j}\zeta_j z_j$ with $\zeta_j \sim \mathcal N(0,1)$ for $j=1,\ldots,J$. Thus the $j^{th}$
element of the initial ensemble may be viewed as the $j^{th}$ term in
a \cls{Karhunen-Lo\`eve (KL) expansion of a draw from $\mu_0$ which, in this
case, is a Fourier sine series expansion.}

\subsubsection{Noise-Free Observational Data}
To numerically verify the theoretical results presented in section \ref{ssec:nf}, we first restrict the discussion to the noise-free case, i.e. we assume that $\eta=0$ in \eqref{eq:modelne} and set $\Gamma=id$.
The study summarized in Figures \ref{fig:linNFe} - \ref{fig:linNFsol} shows the influence of the number of particles on the dynamical behavior of the quantities $e$ and $r$, the matrix-valued quantities and the resulting EnKF estimate. 
%~\\[-0.1cm]
\begin{center}
\begin{minipage}{0.6\textwidth}
 \begin{figure}[H]
\centering
    \includegraphics[width=1.0\textwidth]{./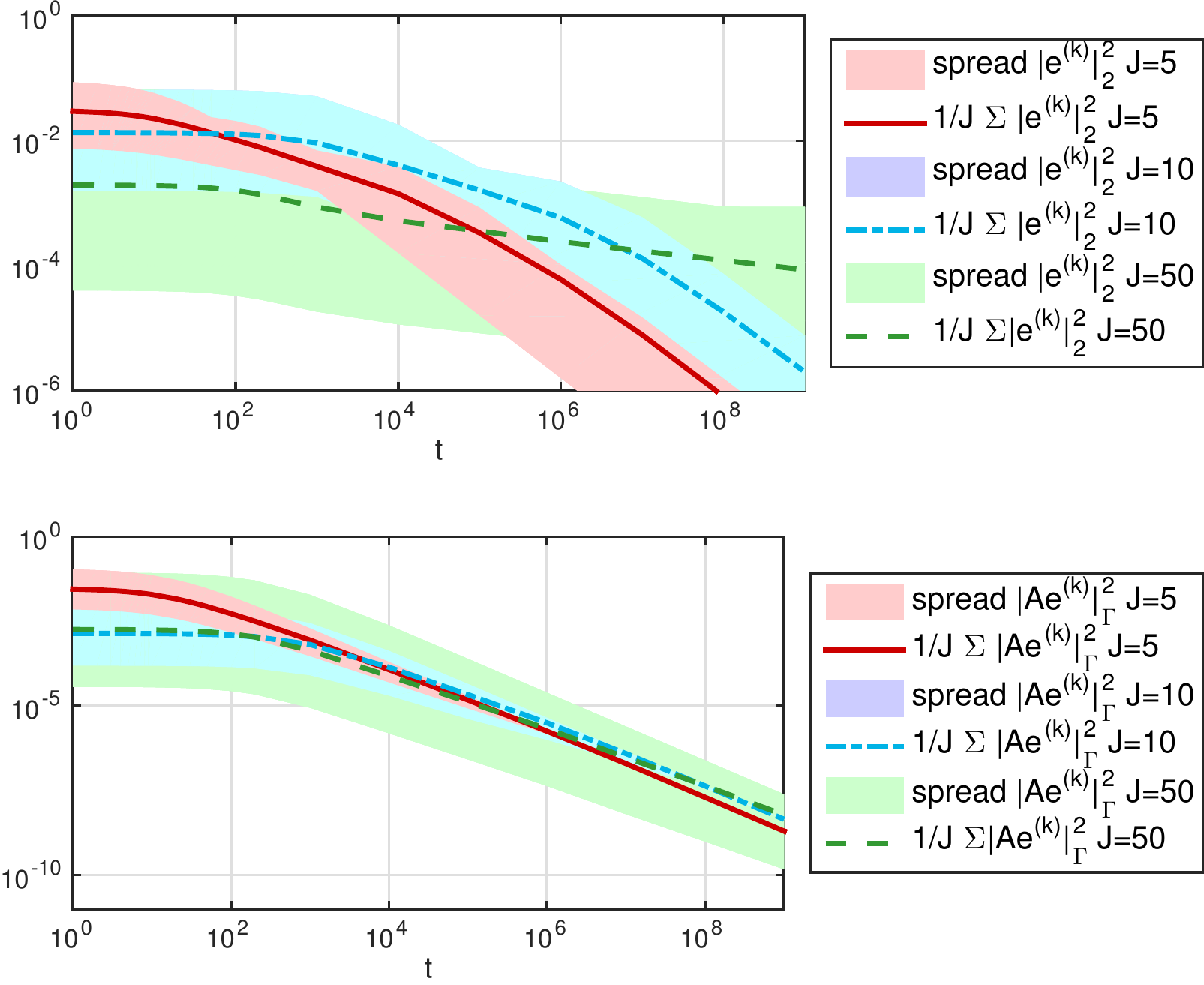}
~\\[-0.75cm]\caption{\footnotesize \label{fig:linNFe}
Quantities $|e|_2^2$, $|Ae|_{\Gamma}^2$ w.r. to time $t$, $J=5$ (red), $J=10$ (blue) and $J=50$ (green), $\beta=10$, $K=2^4-1$, initial ensemble chosen based on KL expansion of $\cls{C_0}=\beta(A-id)^{-1}$. }
 \end{figure}
\end{minipage}
\end{center}
~\\[0.2cm]
As shown in Theorem \ref{t:2}, the rate of convergence of the ensemble collapse is algebraic (cf. Figure \ref{fig:linNFe}) with a constant growing with larger ensemble size. Comparing the dynamical behavior of the residuals, we observe that, for the ensemble of size \cls{J=5}, the estimate can be improved in the beginning, but reaches a plateau after a short time. \cls{Increasing the number of particles to $J=10$ improves the accuracy of the estimate. For the ensemble size $J=50$,} Figure \ref{fig:linNFr} shows the convergence of the projected residuals, i.e. the observations can be perfectly recovered. The same behavior can be observed by comparing the EnKF estimate with the truth and the observational data (cf. Figure \ref{fig:linNFsol}).
% \begin{minipage}{0.1\textwidth}
% ~
% \end{minipage}
~\\[-0.75cm]
\begin{center}
 \begin{minipage}{0.6\textwidth}
 \begin{figure}[H]
\centering
    \includegraphics[width=1.0\textwidth]{./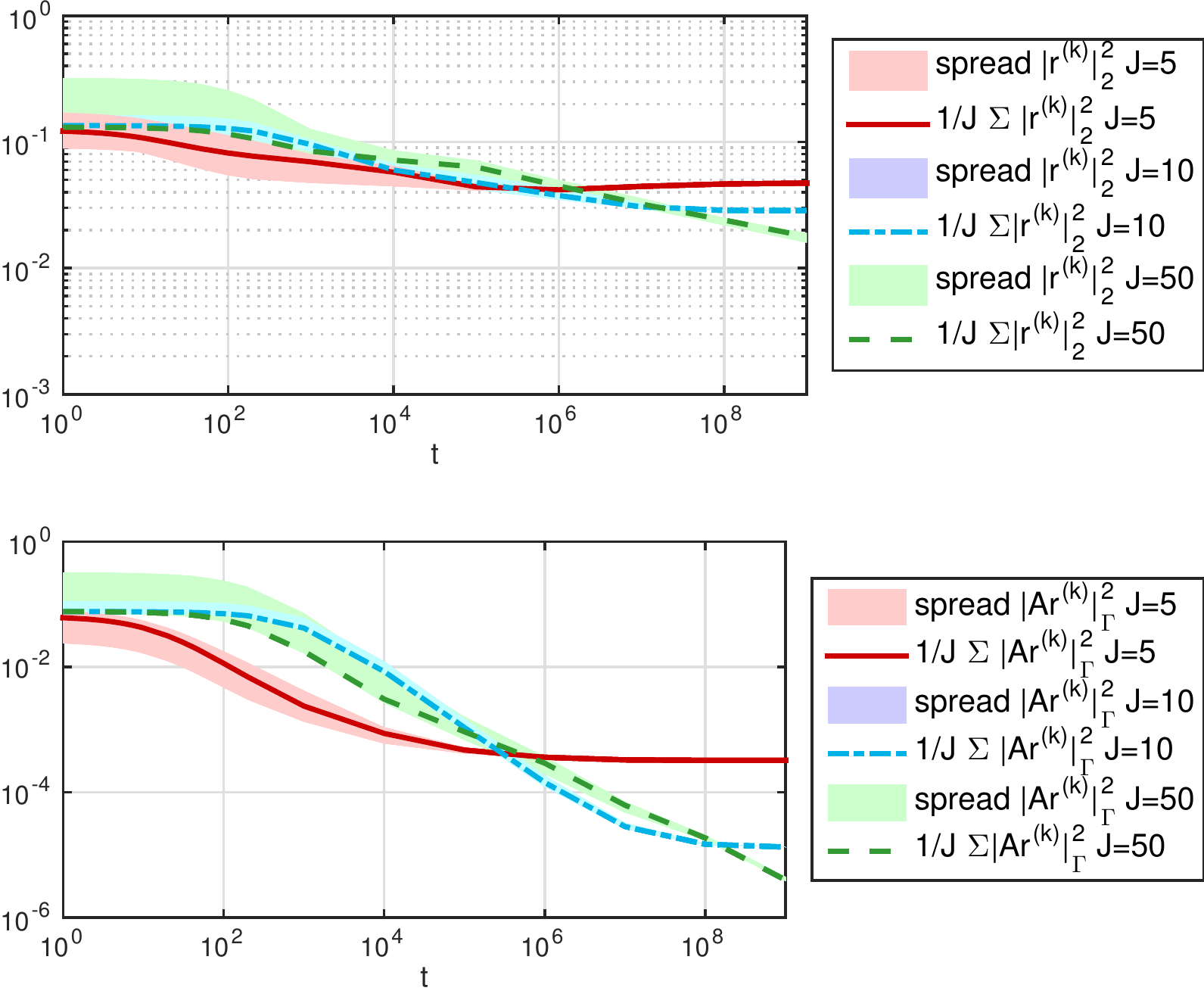}
~\\[-0.75cm]\caption{\footnotesize\label{fig:linNFr}
Quantities $|r|_2^2$, $|Ar|_{\Gamma}^2$ w.r. to time $t$, $J=5$ (red), $J=10$ (blue) and $J=50$ (green), $\beta=10$, $K=2^4-1$, initial ensemble chosen based on KL expansion of $\cls{C_0}=\beta(A-id)^{-1}$. }
 \end{figure}
\end{minipage}
\end{center}
~\\[0.2cm]
The results derived in this paper hold true for each particle, however, for the sake of presentation, the empirical mean of the quantities of interest is shown and the spread indicates the minimum and maximum deviations of the ensemble members from the empirical mean.
~\\[-0.1cm]

 \begin{minipage}{0.45\textwidth}
 \begin{figure}[H]
\centering
~\\[0.15cm]
    \includegraphics[width=\textwidth]{./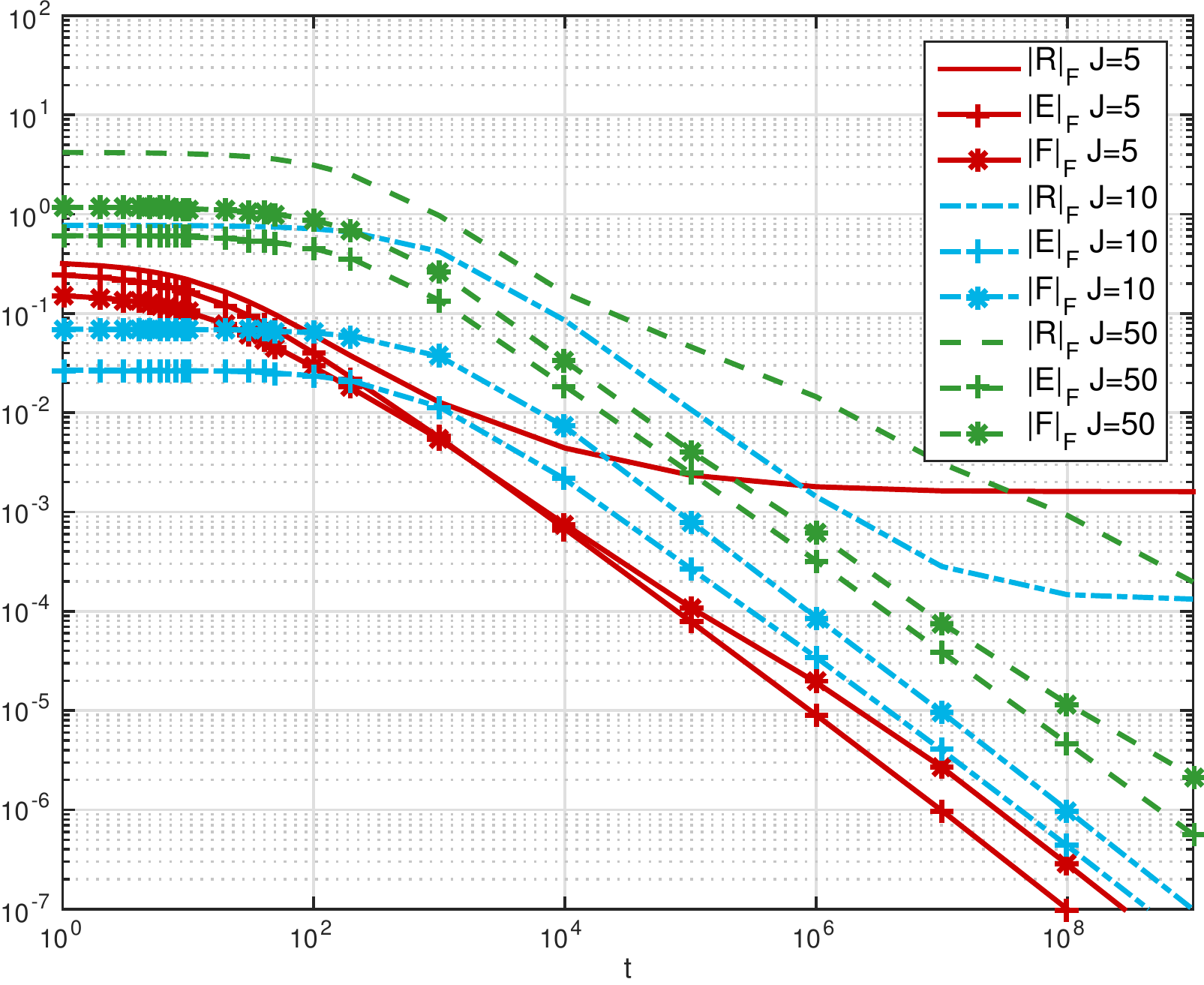}
~\\[-0.75cm]\caption{\footnotesize\label{fig:linNFM}
Quantities $\|E\|_F$, $\|F\|_F$, $\|R\|_F$ w.r. to time $t$, $J=5$ (red), $J=10$ (blue) and $J=50$ (green), $\beta=10$, $K=2^4-1$, initial ensemble chosen based on KL expansion of $\cls{C_0}=\beta(A-id)^{-1}$. \color{white}{Comparison of the EnKF }\color{black}}
 \end{figure}
 \end{minipage}
 \begin{minipage}{0.025\textwidth}
 ~
 \end{minipage}
 \begin{minipage}{0.45\textwidth}
 \begin{figure}[H]
\centering
    \includegraphics[width=\textwidth]{./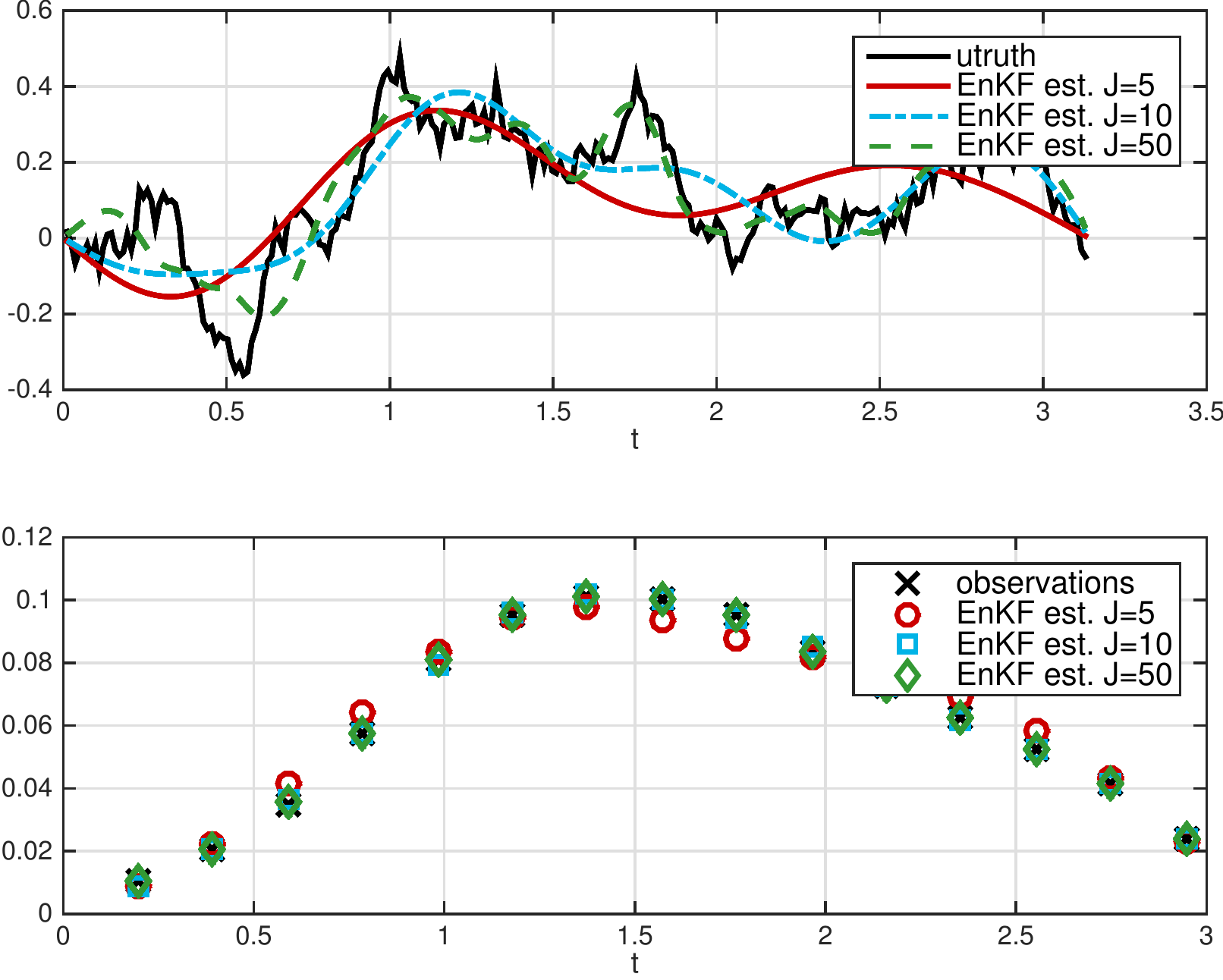}
~\\[-0.75cm]\caption{\footnotesize \label{fig:linNFsol}
Comparison of the EnKF estimate with the truth and the observations, $J=5$ (red), $J=10$ (blue) and $J=50$ (green), $\beta=10$, $K=2^4-1$, initial ensemble chosen based on KL expansion of $\cls{C_0}=\beta(A-id)^{-1}$. }
 \end{figure}

\end{minipage}
~\\[0.2cm]
Due to the construction of the ensembles in the example, the subspace spanned by the ensemble of size 5 is a strict subset of the subspace spanned by the larger \cls{ensembles}. Thus, due to 
Theorem \ref{t:3}, which characterizes the convergence of the residuals with respect to the approximation quality of the subspace spanned by the initial ensemble, the EnKF estimate can be substantially improved by controlling this subspace. As illustrated in Figure \ref{fig:linNFr}, the mapped residual of the ensemble of size 5 decreases monotonically, but levels off after a short time. \cls{Similar convergence properties can be observed for $J=10$.} The same behavior is expected for the larger ensemble, when integrating over a larger time horizon. This can be also observed for the matrix-valued quantities depicted in Figure \ref{fig:linNFM}.  

We will investigate this point further by comparing the performance of two ensembles, both of size 5: one based on the KL expansion and one \cls{chosen such that the contribution of  $A\rp(t)$ in Theorem \ref{t:3} is minimized. Since we use artificial data, we can minimize the contribution of $A\rp(t)$ by ensuring that $Ar^{(1)}=\sum_{k=1}^J \alpha_k Ae^{(k)}$ for some coefficients $\alpha_k\in\mathbb R$. Given $u^{(2)},\ldots,u^{(J)}$ and coefficients $\alpha_1,\ldots,\alpha_J$, we define $u^{(1)}=(1-\alpha_1+\sum_{k=1}^J \alpha_k/J)^{-1}(u^\dagger-\alpha_1/J\sum_{j=2}^J\ u^{(j)}+\sum_{k=2}^J\alpha_k u^{(k)}-\alpha_k/J \sum_{j=2}^J u^{(j)})$, which gives the desired property of the ensemble. Note that this approach is not feasible in practice and has to be replaced by an adaptive strategy \cls{minimizing the contribution of} $A\rp(t)$. However this experiment serves to illusrate the important role of the initial ensemble in determining the error and is included for this reason.}
The convergence rate of the mapped residuals and of the ensemble collapse is algebraic in both cases, with rate 1 (in the squared Euclidean norm). Figure \ref{fig:linNFradaptive} shows the convergence of the projected residuals for the adaptively chosen ensemble. The decomposition of the residuals (cf. Figure \ref{fig:linNFdecompadaptive}) numerically verifies the presented theory, which motivates the adaptive construction of the ensemble. Methods to realize this strategy, in the linear and nonlinear case, will be addressed in a subsequent paper. 
~\\[-0.1cm]

 \begin{minipage}{0.45\textwidth}
 \begin{figure}[H]
\centering
    \includegraphics[width=\textwidth]{./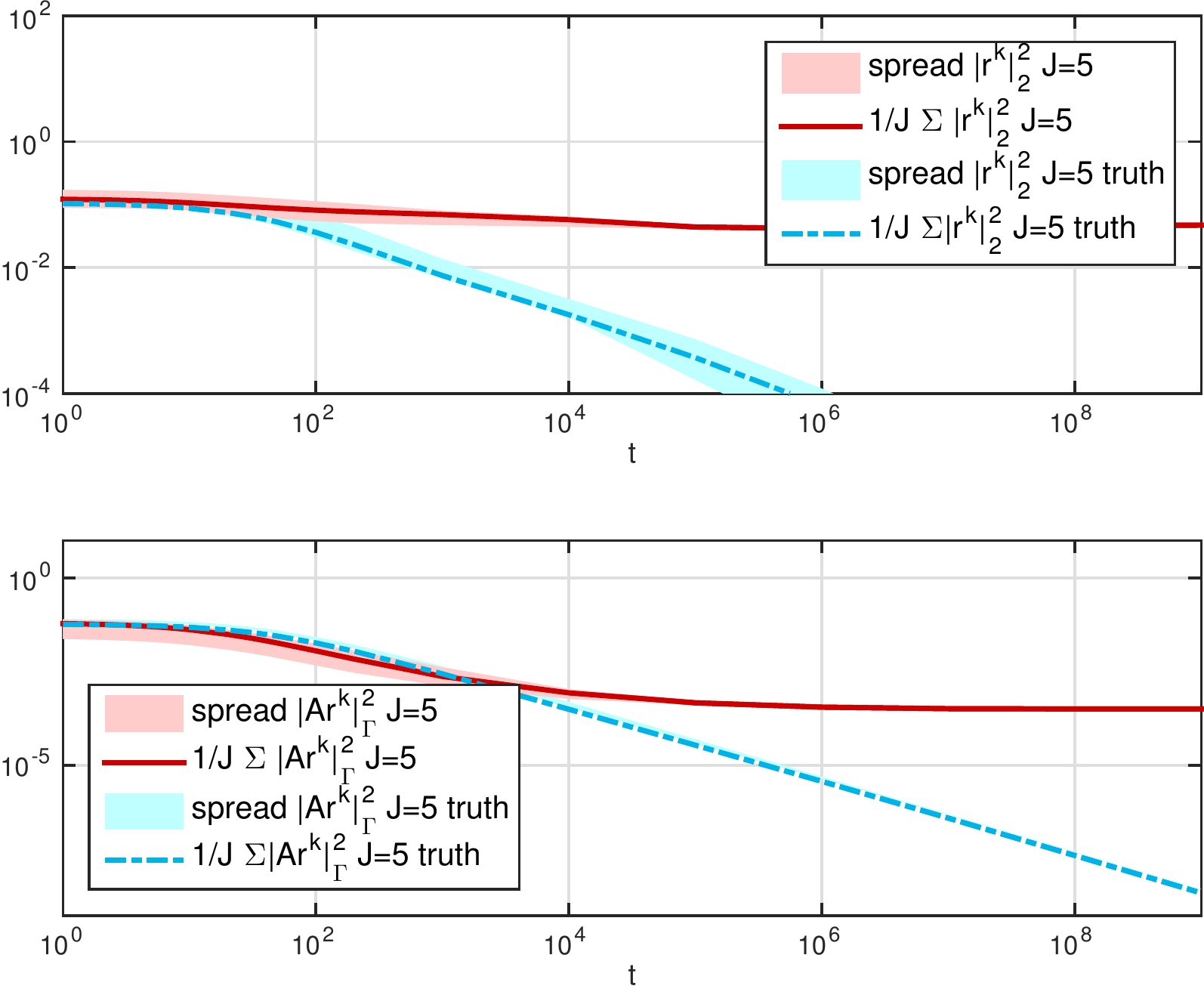}
~\\[-0.75cm]\caption{\footnotesize\label{fig:linNFradaptive}
Quantities $|r|_2^2$, $|Ar|_{\Gamma}^2$ w.r. to time $t$, $J=5$ based on KL expansion  of $\cls{C_0}=\beta(A-id)^{-1}$ (red) and $J=5$ \cls{minimizing the contribution of} $A\rp(t)$ (blue), $\beta=10$, $K=2^4-1$.  \color{white}{Comparison of the EnKF estimate }\color{black}}
 \end{figure}
\end{minipage}
 \begin{minipage}{0.025\textwidth}
 ~
 \end{minipage}
 \begin{minipage}{0.45\textwidth}
 \begin{figure}[H]
\centering
    \includegraphics[width=\textwidth]{./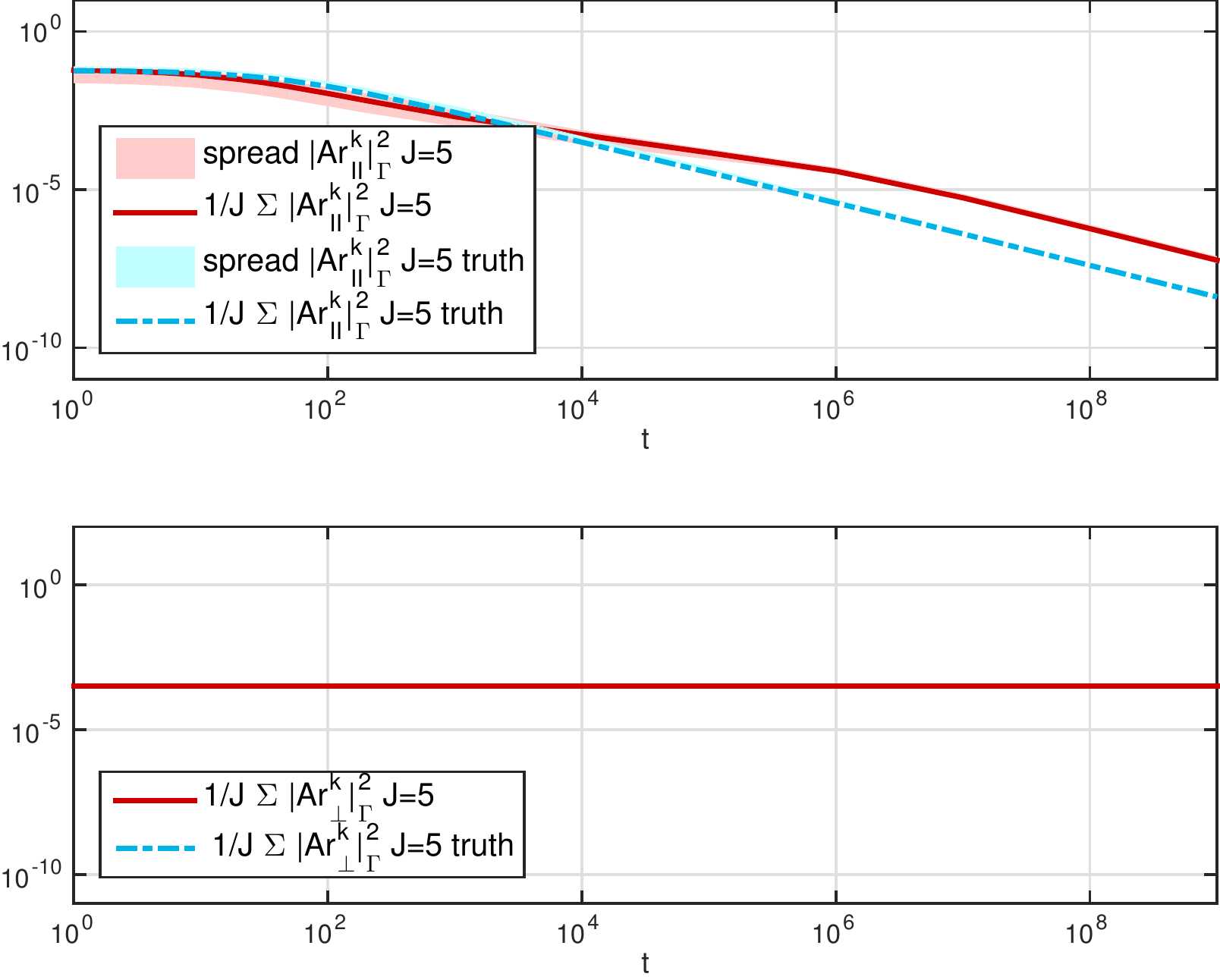}
~\\[-0.75cm]\caption{\footnotesize\label{fig:linNFdecompadaptive}
Quantities $|A\rl|_\Gamma^2$, $|A\rp|_{\Gamma}^2$ w.r. to time $t$, $J=5$ based on KL expansion  of $\cls{C_0}=\beta(A-id)^{-1}$ (red) and $J=5$ \cls{minimizing the contribution of} $A\rp(t)$ (blue), $\beta=10$, $K=2^4-1$.}
 \end{figure}
 \end{minipage}
~\\[0.2cm]

\subsubsection{Noisy Observational Data}\label{sec:NEnoisy}
We will now allow for noise in the observational data, i.e. we assume that the data is given by $y^\dagger=\mathcal O A^{-1} u^\dagger +\eta^\dagger$, where $\eta^\dagger$ is a fixed realization of the random vector $\eta\sim\cls{\mathcal N}(0,0.01^2 \id)$. \cls{Note that the standard deviation is chosen to be roughly 10\% of the (maximum of the) observed data.} \cls{Besides} the quantities $e$ and $r$, the misfit 
$\vartheta^{(j)}=Au^{(j)}-y^\dagger=Ar^{(j)}-\eta^\dagger$ 
of each ensemble member is of interest, since, in practice, the residual is not accessible and the misfit is used to check for convergence and to design an appropriate stopping criterion.

\cls{Besides} the two ensembles with 5 particles introduced in the previous section, we define an additional one \cls{minimizing the contribution of} $\dpp$ to the misfit,
analogously to what was done in the adaptive initialization in the
previous subsection, and motivated by the analogue of Theorem \ref{t:3}
in the noisy data case. Note that the design of an adaptive ensemble based on the decomposition of the projected residual is, in general, not feasible without explicit knowledge of the noise $\eta^\dagger$.

 \begin{minipage}{0.45\textwidth}
 \begin{figure}[H]
\centering
    \includegraphics[width=\textwidth]{./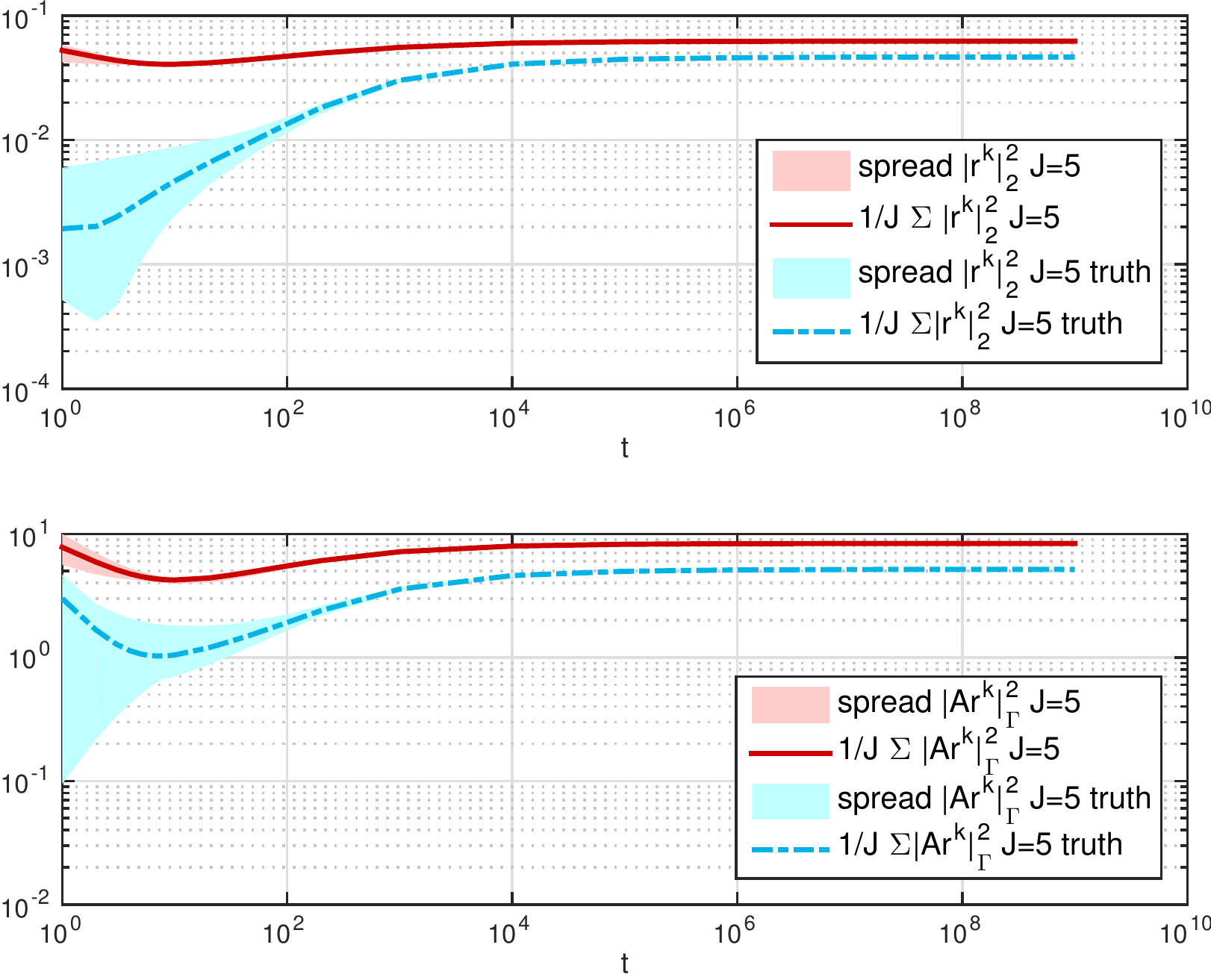}
~\\[-0.75cm]\caption{\footnotesize \label{fig:linNDr}
Quantities $|r|_2^2$, $|Ar|_{\Gamma}^2$ w.r. to $t$, $J=5$ based on KL expansion  of $\cls{C_0}=\beta(A-id)^{-1}$ (red), $J=5$ adaptively chosen (blue), $\beta=10$, $K=2^4-1$, $\eta\sim\cls{\mathcal N}(0,0.01^2 \id)$.}
 \end{figure}
\end{minipage}
 \begin{minipage}{0.025\textwidth}
 ~
 \end{minipage}
 \begin{minipage}{0.45\textwidth}
 \begin{figure}[H]
\centering
    \includegraphics[width=\textwidth]{./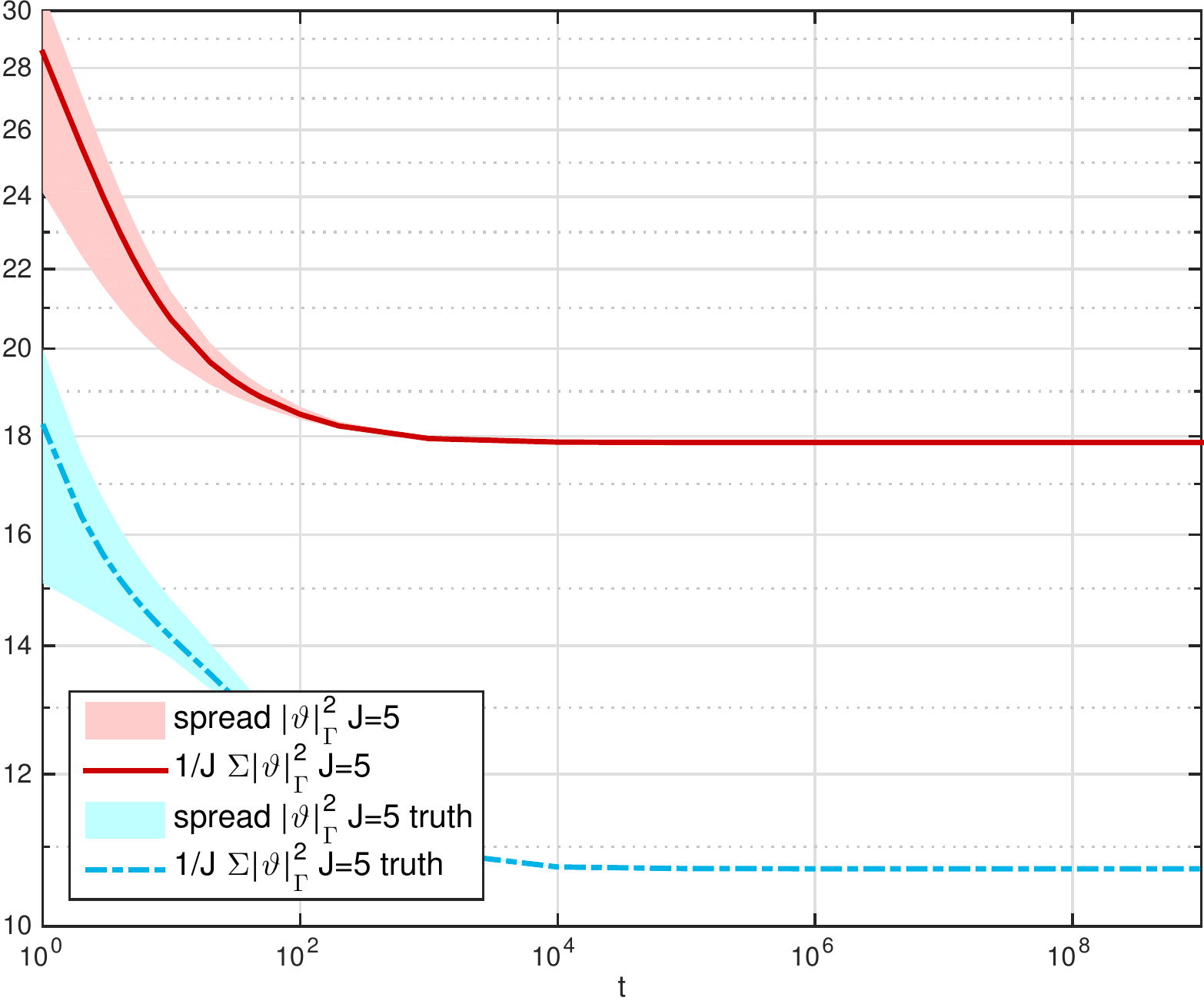}
~\\[-0.75cm]\caption{\footnotesize\label{fig:linNDmis}
Misfit $|\vartheta|_2^2$ w.r. to time $t$, $J=5$ based on KL expansion  of $\cls{C_0}=\beta(A-id)^{-1}$ (red), $J=5$ adaptively chosen (blue), $\beta=10$, $K=2^4-1$, $\eta\sim\cls{\mathcal N}(0,0.01^2 \id)$.}
 \end{figure}
\end{minipage}
~\\[0.2cm]
Figure \ref{fig:linNDr} illustrates the well-known overfitting effect, which arises without using appropriate stopping \cls{criteria}. The method tries to fit the noise in the measurements, which results in an increase in the residuals. This effect is not seen in the misfit functional, cf. Figure \ref{fig:linNDmis} and Figure \ref{fig:linNDmiscon}.

~\\[-0.2cm]
\begin{minipage}{0.035\textwidth}
 ~
 \end{minipage}
\begin{minipage}{0.45\textwidth}
 \begin{figure}[H]
\centering
    \includegraphics[width=\textwidth]{./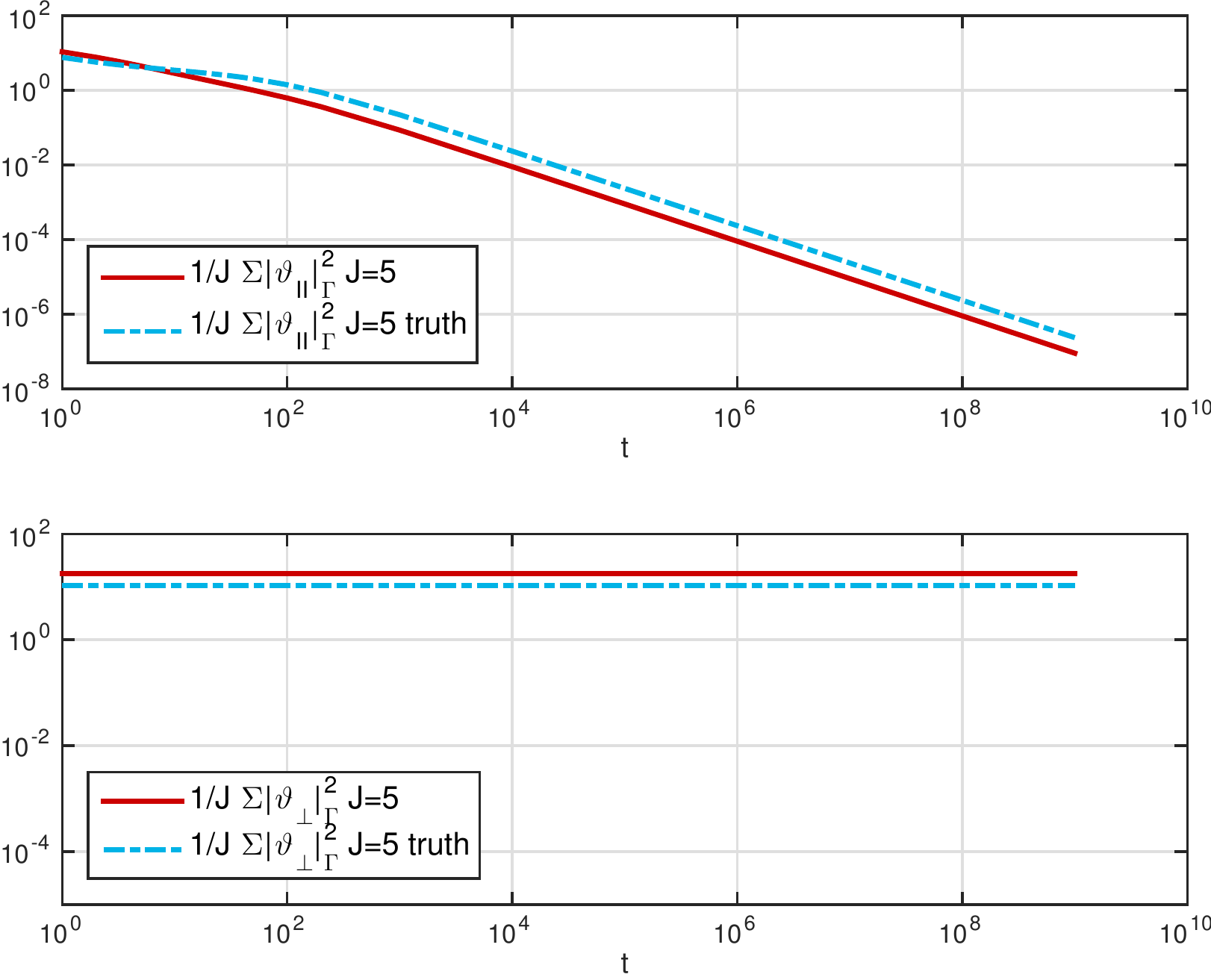}
~\\[-0.75cm]\caption{\footnotesize\label{fig:linNDmiscon}
Quantities $|\dll|_\Gamma^2$, $|\dpp|_{\Gamma}^2$ w.r. to time $t$, $J=5$ based on KL expansion  of $\cls{C_0}=\beta(A-id)^{-1}$ (red), $J=5$ \cls{minimizing the contribution of} $A\rp(t)$ (blue), $\beta=10$, $K=2^4-1$, $\eta\sim\cls{\mathcal N}(0,0.01^2 \id)$.}
 \end{figure}
 \end{minipage}
 \begin{minipage}{0.025\textwidth}
 ~
 \end{minipage}
 \begin{minipage}{0.45\textwidth}
 \begin{figure}[H]
\centering
~\\[0.2cm]
    \includegraphics[width=\textwidth]{./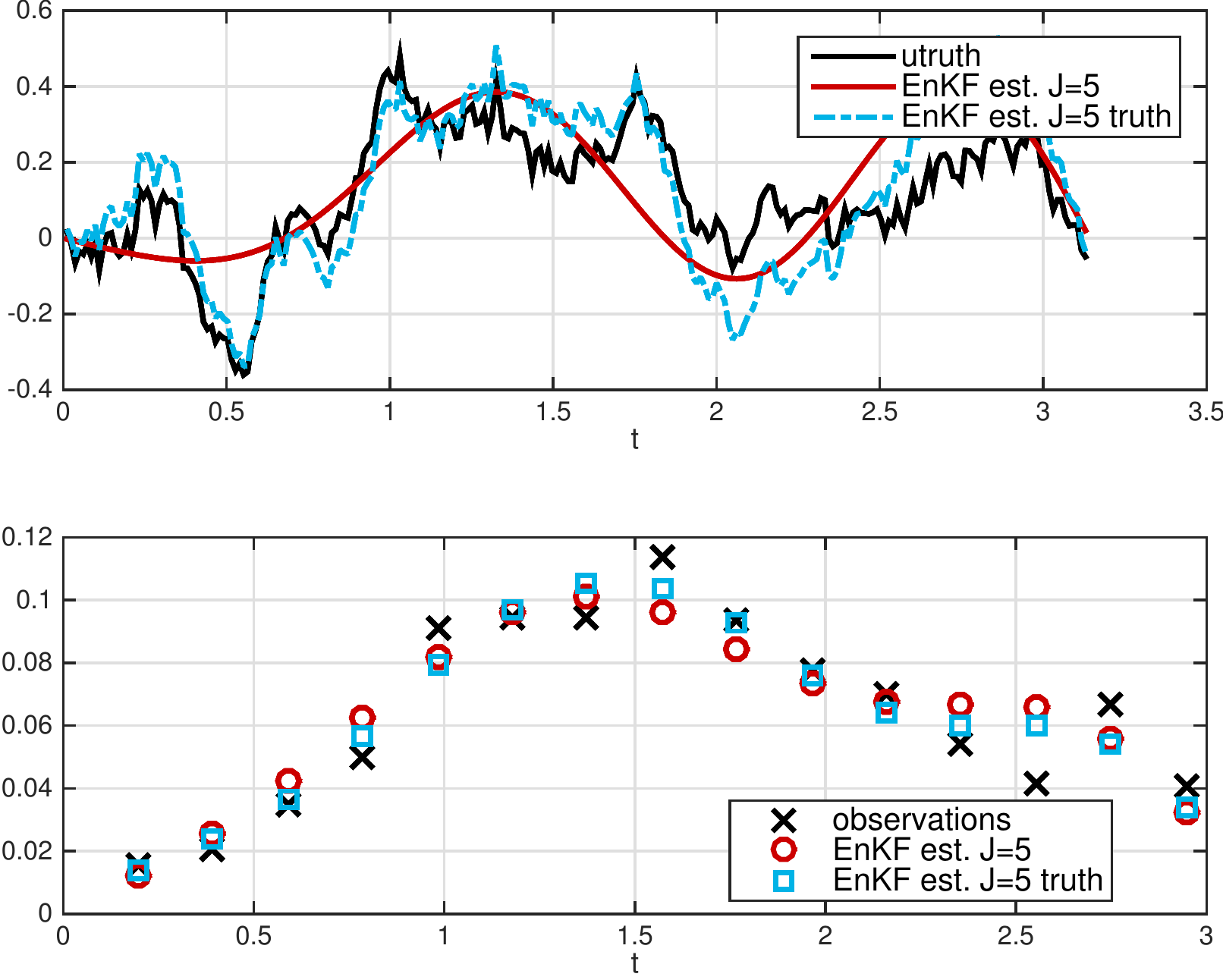}
~\\[-0.75cm]\caption{\footnotesize \label{fig:linNDsol}
Comparison of the EnKF estimate with the truth and the observations, $J=5$ based on KL expansion  of $\cls{C_0}=\beta(A-id)^{-1}$ (red), $J=5$ \cls{minimizing the contribution of} $A\rp(t)$ (blue), $\beta=10$, $K=2^4-1$, $\eta\sim\cls{\mathcal N}(0,0.01^2 \id)$.}
 \end{figure}

\end{minipage}
~\\[0.2cm]
However, the comparison of the EnKF estimates to the truth 
reveals, in Figure \ref{fig:linNDsol}, the strong overfitting effect and suggests the need for a stopping criterion. The Bayesian setting itself provides a so-called a priori stopping rule, i.e. the SMC viewpoint motivates a stopping of the iterations at time $T=1$. Another common choice in the deterministic optimization setting is the discrepancy principle, which accounts for the realization of the noise by checking the following condition $\|\mathcal G(\bar u(t))-y\|_\Gamma\le\tau$, where $\tau>0$ depends on the dimension of the observational space. 
~\hspace*{0.5cm}~

\begin{minipage}{0.45\textwidth}
 \begin{figure}[H]
\centering
~\\[0.15cm]
    \includegraphics[width=\textwidth]{./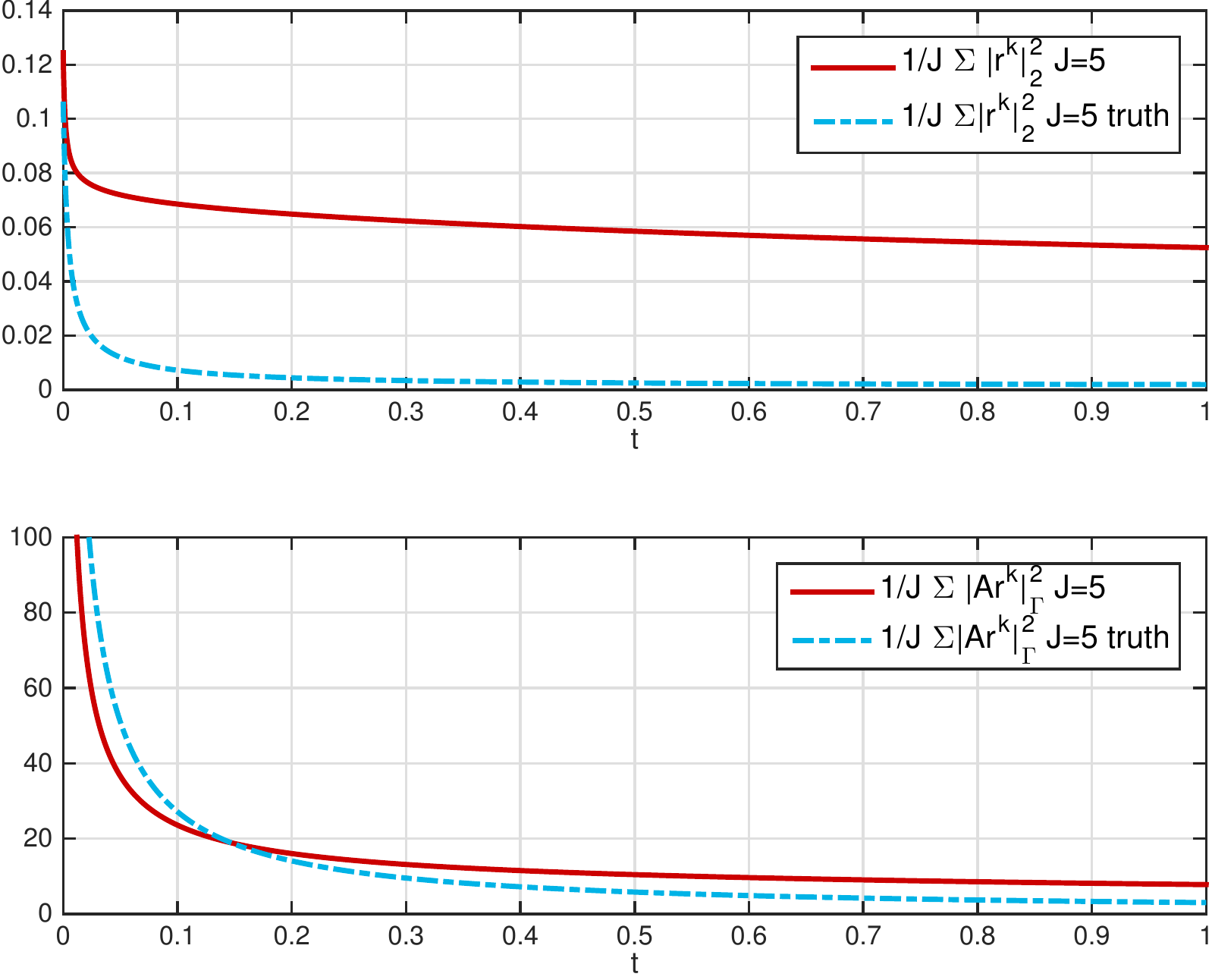}
~\\[-0.75cm]\caption{\footnotesize \label{fig:linNDrT1}
Quantities $|r|_2^2$, $|Ar|_{\Gamma}^2$ w.r. to time $t$, $J=5$ based on KL expansion  of $\cls{C_0}=\beta(A-id)^{-1}$ (red), $J=5$ \cls{minimizing the contribution of} $A\rp(t)$ (blue), $\beta=10$, $K=2^4-1$, $\eta\sim\cls{\mathcal N}(0,0.01^2 \id)$, Bayesian stopping rule.\color{white} $J=5$ {minimizing the contribution of} $A\rp(t)$ (  \color{black}}
 \end{figure}
\end{minipage}
 \begin{minipage}{0.025\textwidth}
 ~
 \end{minipage}
 \begin{minipage}{0.45\textwidth}
 \begin{figure}[H]
\centering
    \includegraphics[width=\textwidth]{./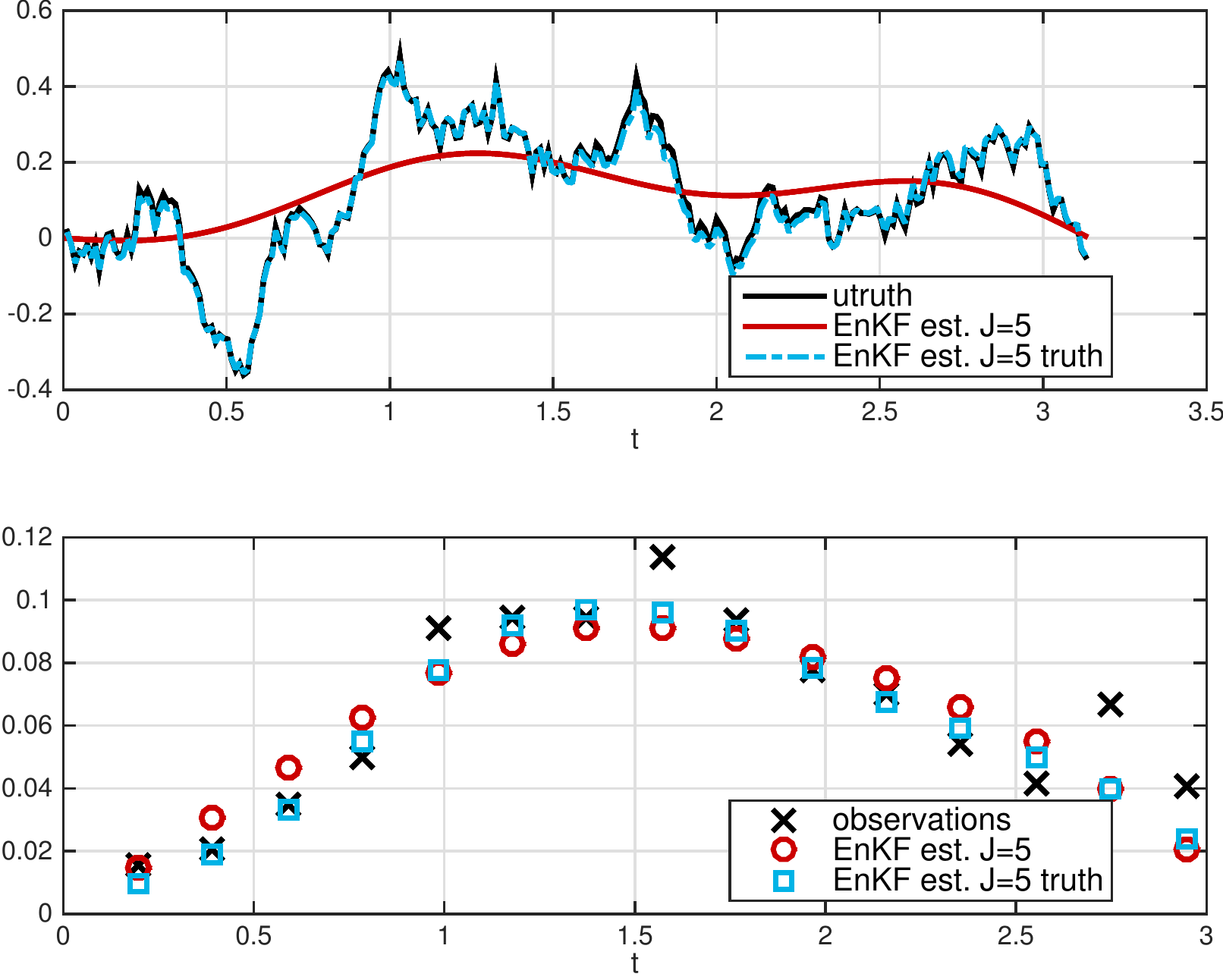}
~\\[-0.75cm]\caption{\footnotesize \label{fig:linNDsolT1}
Comparison of the EnKF estimate with the truth and the observations, $J=5$ based on KL expansion  of $\cls{C_0}=\beta(A-id)^{-1}$ (red), $J=5$ \cls{minimizing the contribution of} $A\rp(t)$ (blue), $\beta=10$, $K=2^4-1$, $\eta\sim\cls{\mathcal N}(0,0.01^2 \id)$, Bayesian stopping rule.}
 \end{figure}
\end{minipage}
~\\[0.2cm]
Figures \ref{fig:linNDrT1} - \ref{fig:linNDsolT1} show the results obtained by employing the Bayesian stopping rule\cls{, i.e. by integrating up to time $T=1$.} The adaptively chosen ensemble leads to much better results in the case of the Bayesian stopping rule. 
 Since we do not expect to have explicit knowledge of the noise, the adaptive 
strategy as presented above is in general not feasible. However an adaptive 
choice of the ensemble according to the misfit may lead to a 
strong overfitting effect, as shown in Figures \ref{fig:linNDmis3} - \ref{fig:linNDsol3}.

~\\[0.2cm]

 \begin{minipage}{0.45\textwidth}
 \begin{figure}[H]
\centering
    \includegraphics[width=\textwidth]{./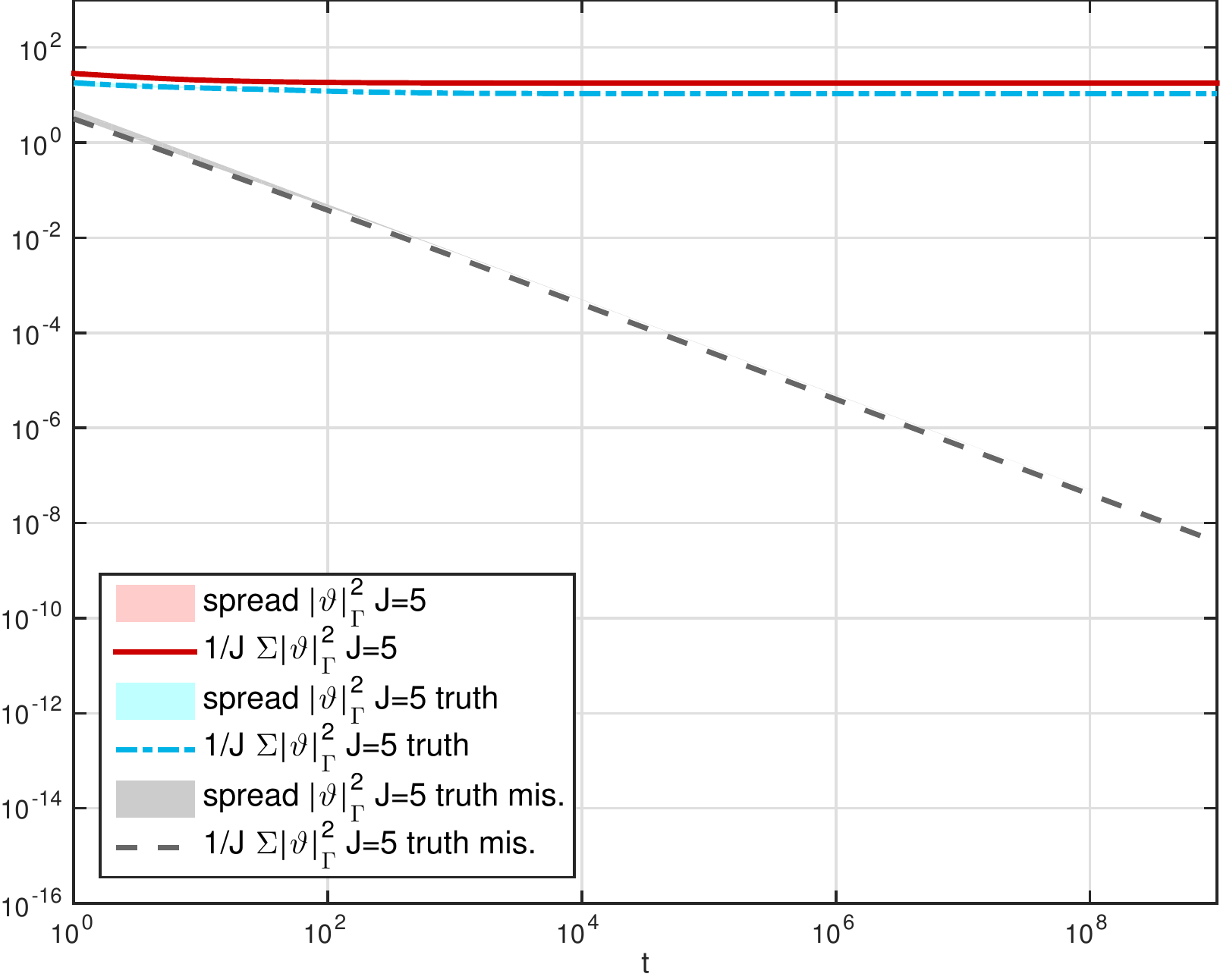}
~\\[-0.75cm]\caption{\footnotesize\label{fig:linNDmis3}
Misfit $|\vartheta|_\Gamma^2$ w.r. to time $t$, $J=5$ based on KL expansion  of $\cls{C_0}=\beta(A-id)^{-1}$ (red), $J=5$ adaptively chosen (blue), $J=5$ \cls{minimizing the contribution of} $\vartheta_\perp$ w.r. to misfit (gray), $\beta=10$, $K=2^4-1$, $\eta\sim\cls{\mathcal N}(0,0.01^2 \id)$.}
 \end{figure}
 \end{minipage}
 \begin{minipage}{0.025\textwidth}
 ~
 \end{minipage}
 \begin{minipage}{0.45\textwidth}
 \begin{figure}[H]
\centering
    \includegraphics[width=\textwidth]{./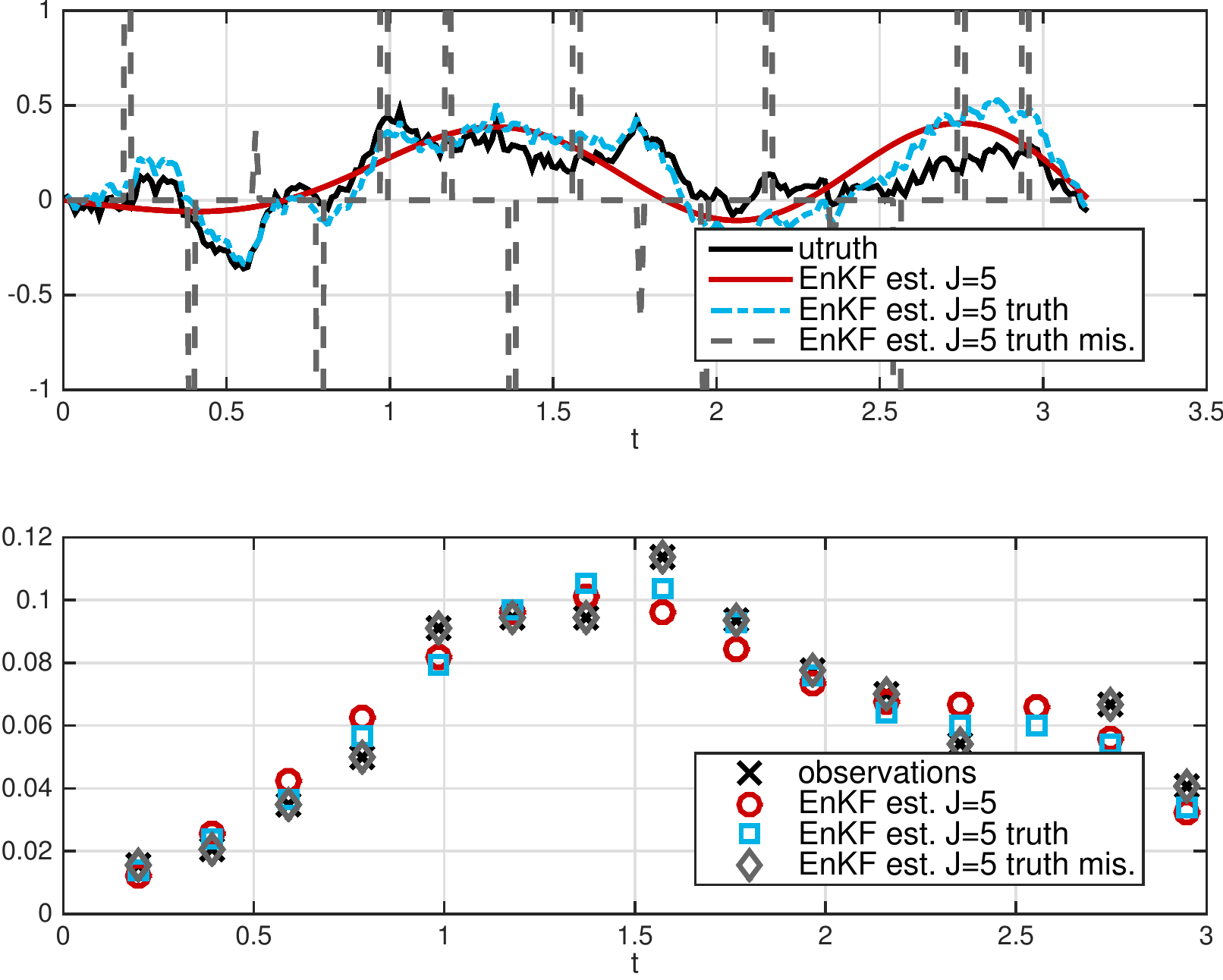}
~\\[-0.75cm]\caption{\footnotesize \label{fig:linNDsol3}
Comparison of the EnKF estimate with the truth and the observations, $J=5$ based on KL expansion  of $\cls{C_0}=\beta(A-id)^{-1}$ (red), $J=5$ adaptively chosen (blue), $J=5$ \cls{minimizing the contribution of} $\vartheta_\perp$ w.r. to misfit (gray), $\beta=10$, $K=2^4-1$, $\eta\sim\cls{\mathcal N}(0,0.01^2 \id)$.}
 \end{figure}

\end{minipage}
~\\[1.6cm]
\cls{
The overfitting effect is still present in the small noise regime as shown below. The standard deviation of the noise is reduced by $100$, i.e. $\eta\sim\cls{\mathcal N}(0,0.001^2 \id)$. 
~\\[0.2cm]

 \begin{minipage}{0.45\textwidth}
 \begin{figure}[H]
\centering
    \includegraphics[width=\textwidth]{./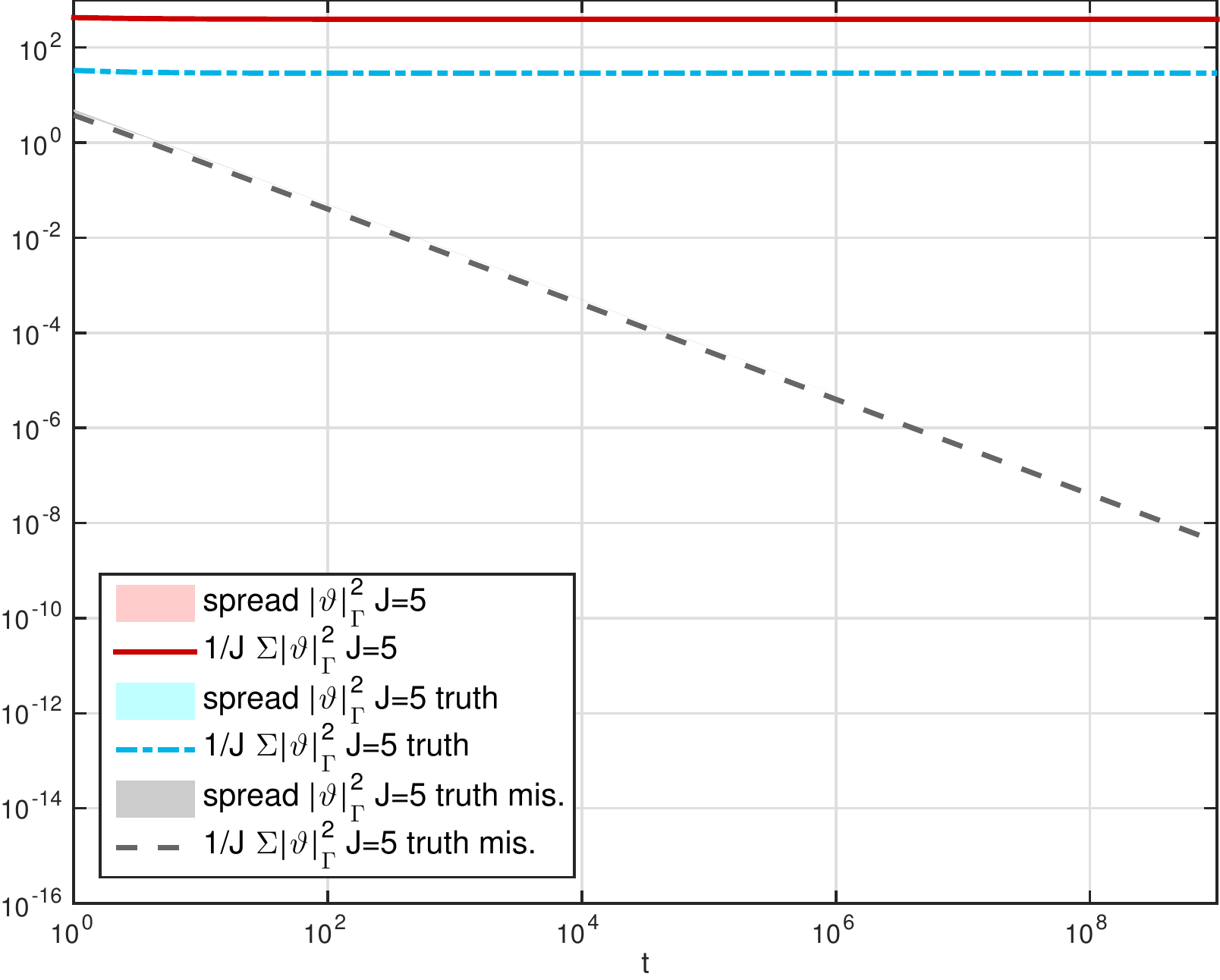}
~\\[-0.75cm]\caption{\footnotesize\label{fig:linNDmis3ns}
Misfit $|\vartheta|_\Gamma^2$ w.r. to time $t$, $J=5$ based on KL expansion  of $\cls{C_0}=\beta(A-id)^{-1}$ (red), $J=5$ adaptively chosen (blue), $J=5$ \cls{minimizing the contribution of} $\vartheta_\perp$ w.r. to misfit (gray), $\beta=10$, $K=2^4-1$, $\eta\sim\cls{\mathcal N}(0,0.001^2 \id)$.}
 \end{figure}
 \end{minipage}
 \begin{minipage}{0.025\textwidth}
 ~
 \end{minipage}
 \begin{minipage}{0.45\textwidth}
 \begin{figure}[H]
\centering
    \includegraphics[width=\textwidth]{./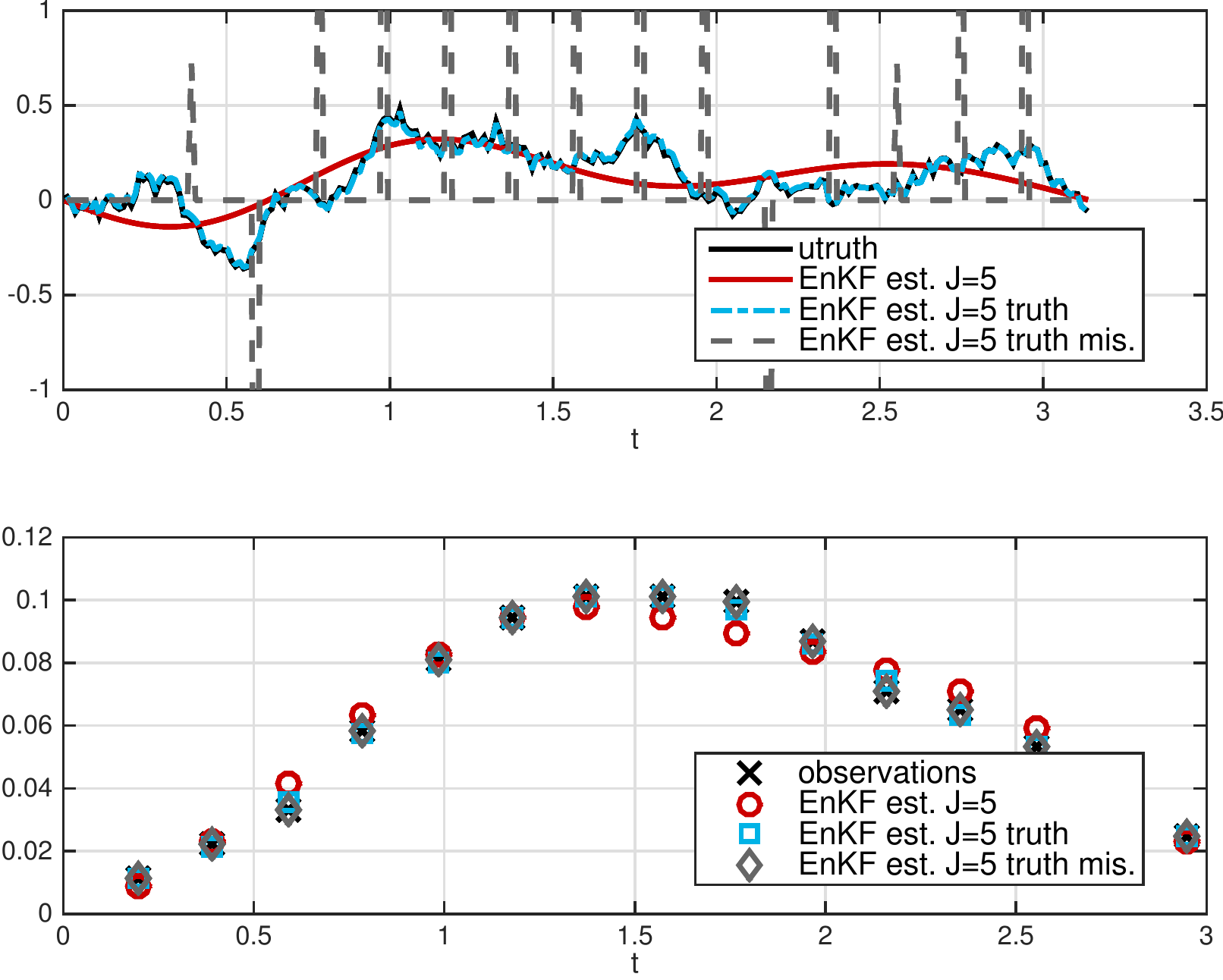}
~\\[-0.75cm]\caption{\footnotesize \label{fig:linNDsol3ns}
Comparison of the EnKF estimate with the truth and the observations, $J=5$ based on KL expansion  of $\cls{C_0}=\beta(A-id)^{-1}$ (red), $J=5$ adaptively chosen (blue), $J=5$ \cls{minimizing the contribution of} $\vartheta_\perp$ w.r. to misfit (gray), $\beta=10$, $K=2^4-1$, $\eta\sim\cls{\mathcal N}(0,0.001^2 \id)$.}
 \end{figure}

\end{minipage}
~\\[0.2cm]
The ill-posedness of the problem leads to the instabilities of the identification problem and requires the use of an appropriate stopping rule.}

\subsection{Nonlinear Forward Model}

To investigate the numerical behavior of the EnKF for nonlinear inverse
problems, we consider the following two-dimensional elliptic PDE: 
 \begin{equation}
   -\diverg(e^u \nabla p)=f \quad \mbox{in } D:=(-1,1)^2\, , \ p=0  \quad \mbox{in } \partial D.
\end{equation}
We aim to find the log permeability $u$ from $49$ observations of the solution
$p$ on a uniform grid in $D$. We choose $f(x)=100$ for the experiments.
The mapping from $u$ to these observations is now nonlinear.
\cls{Again we work in the noise-free case, and take $\Gamma=I$, $\Sigma=0$ and solve \eqref{eq:ode} to estimate the unknown parameters.} The prior is assumed to be Gaussian with covariance operator 
$\cls{C_0}=(-\triangle)^{-2}$, employing homogeneous Dirichlet boundary conditions
to define the inverse of $-\triangle.$ We use a FEM approximation
based on continuous, piecewise linear ansatz functions on a uniform mesh with meshwidth $h=2^{-4}$.
The initial ensemble of size $5$ and $50$ 
is chosen based on the KL expansion of $\cls{C_0}$ in the same way as in the
previous subsection.

The results given in Figure \ref{fig:nonNFe} and Figure \ref{fig:nonNFr} show a similar behavior as in the linear case. 
The approximation quality of the subspace spanned by the initial ensemble clearly influences, also in the nonlinear example, the accuracy of the estimate.
Taking a look at the EnKF estimate in this nonlinear setting, we observe a satisfactory approximation of the truth and a perfect match of the observational data in the case of the larger ensemble, cf. Figure \ref{fig:nonrescomp2}.

~\\[0.1cm]

\begin{minipage}{0.45\textwidth}
 \begin{figure}[H]
\centering
    \includegraphics[width=\textwidth]{./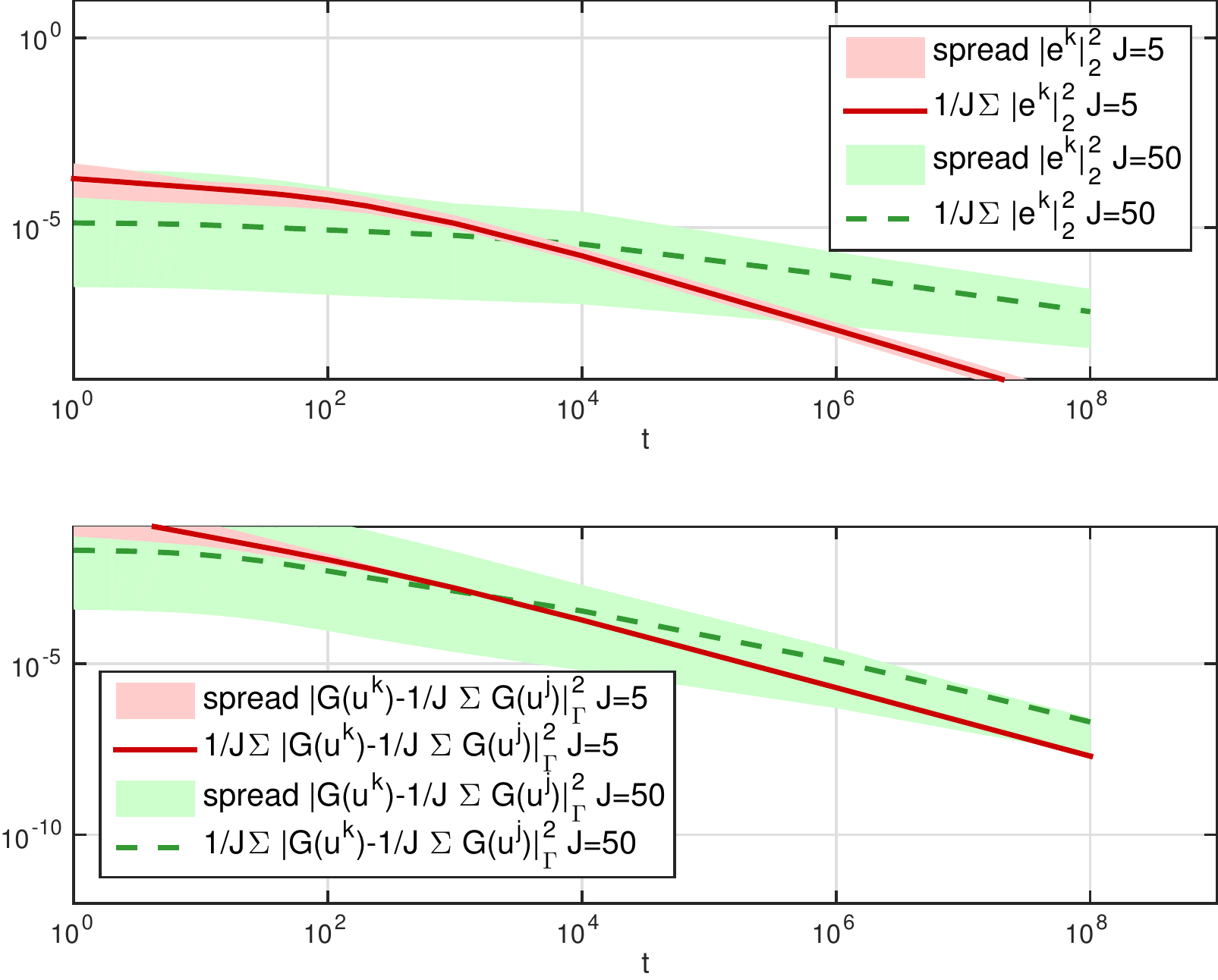}
~\\[-0.75cm]\caption{\footnotesize \label{fig:nonNFe}
Quantities $|e^{(k)}|_2^2$, $|\mathcal G(u^{(k)})-\frac1J\sum_{j=1}^J\mathcal G(u^{(j)}|_{\Gamma}^2$ w.r. to time $t$, $J=5$ (red) and $J=50$ (green), initial ensemble chosen based on KL expansion of $\cls{C_0}=(-\triangle)^{-2}$. }
 \end{figure}
\end{minipage}
 \begin{minipage}{0.025\textwidth}
 ~
 \end{minipage}
 \begin{minipage}{0.45\textwidth}
 \begin{figure}[H]
\centering
    \includegraphics[width=\textwidth]{./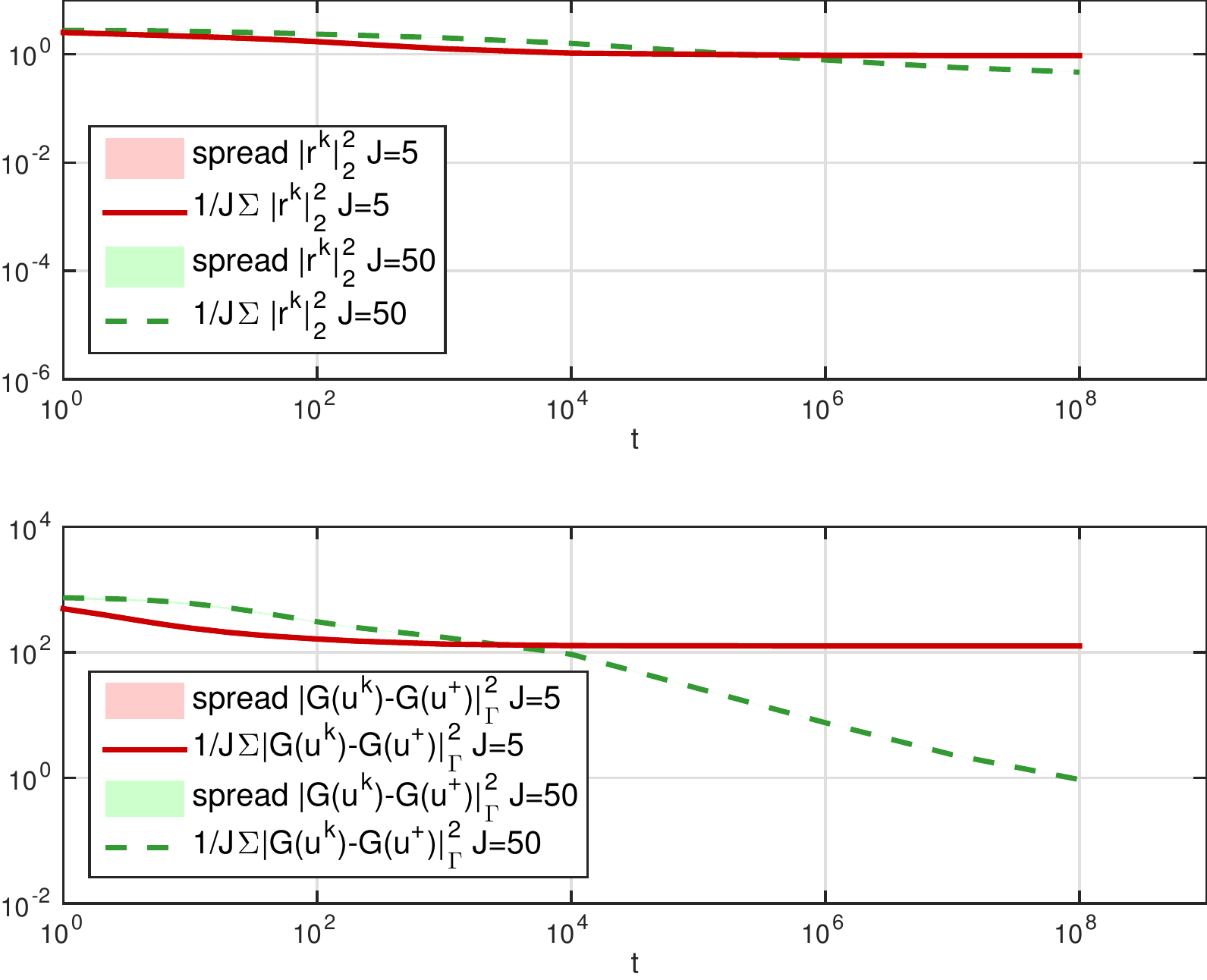}
~\\[-0.75cm]\caption{\footnotesize\label{fig:nonNFr}
Quantities $|r^{(k)}|_2^2$  $|\mathcal G(u^{(k)})-\mathcal G(u^\dagger) |_{\Gamma}^2$ w.r. to time $t$, $J=5$ (red) and $J=50$ (green), initial ensemble chosen based on KL expansion of $\cls{C_0}=(-\triangle)^{-2}$. }
 \end{figure}
\end{minipage}

~\\[-0.1cm]
\begin{figure}[H]
\centering

    \includegraphics[width=0.3\textwidth]{./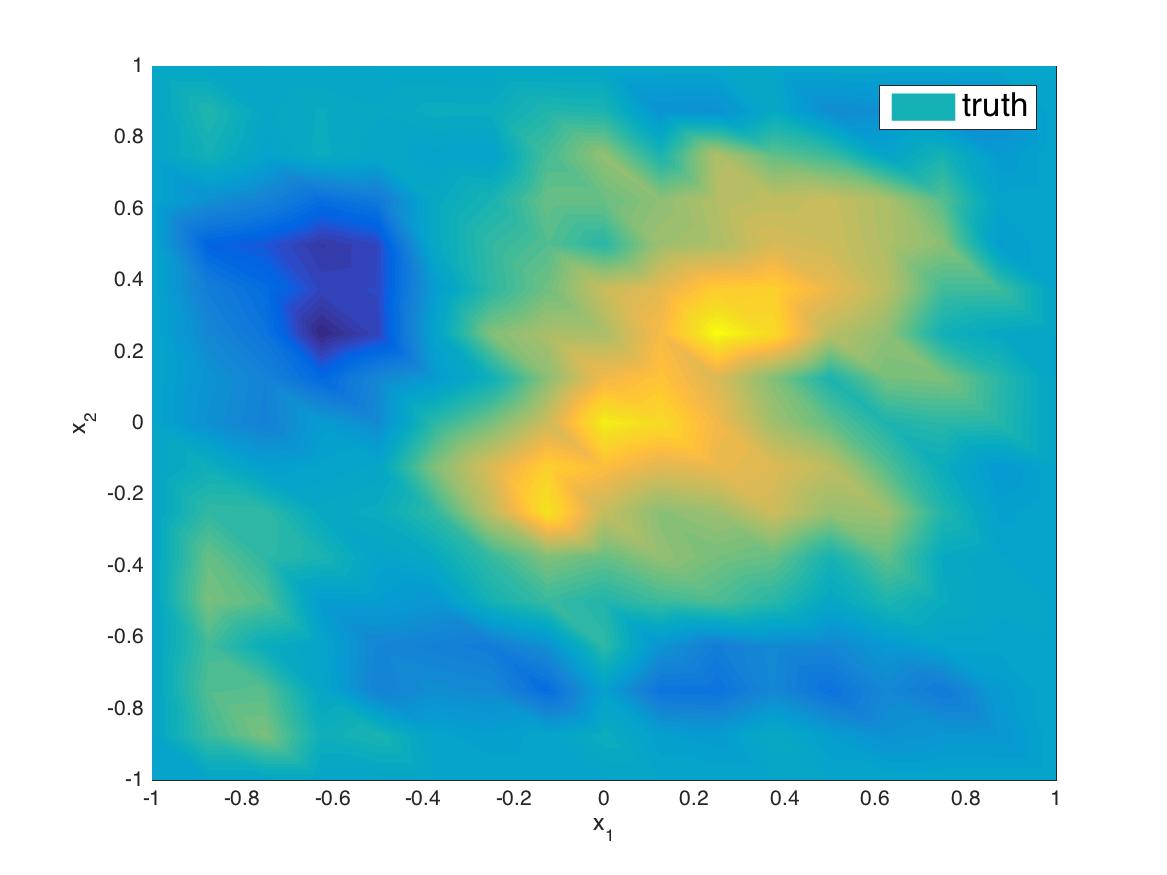}
\hspace{0.0cm}
 \includegraphics[width=0.3\textwidth]{./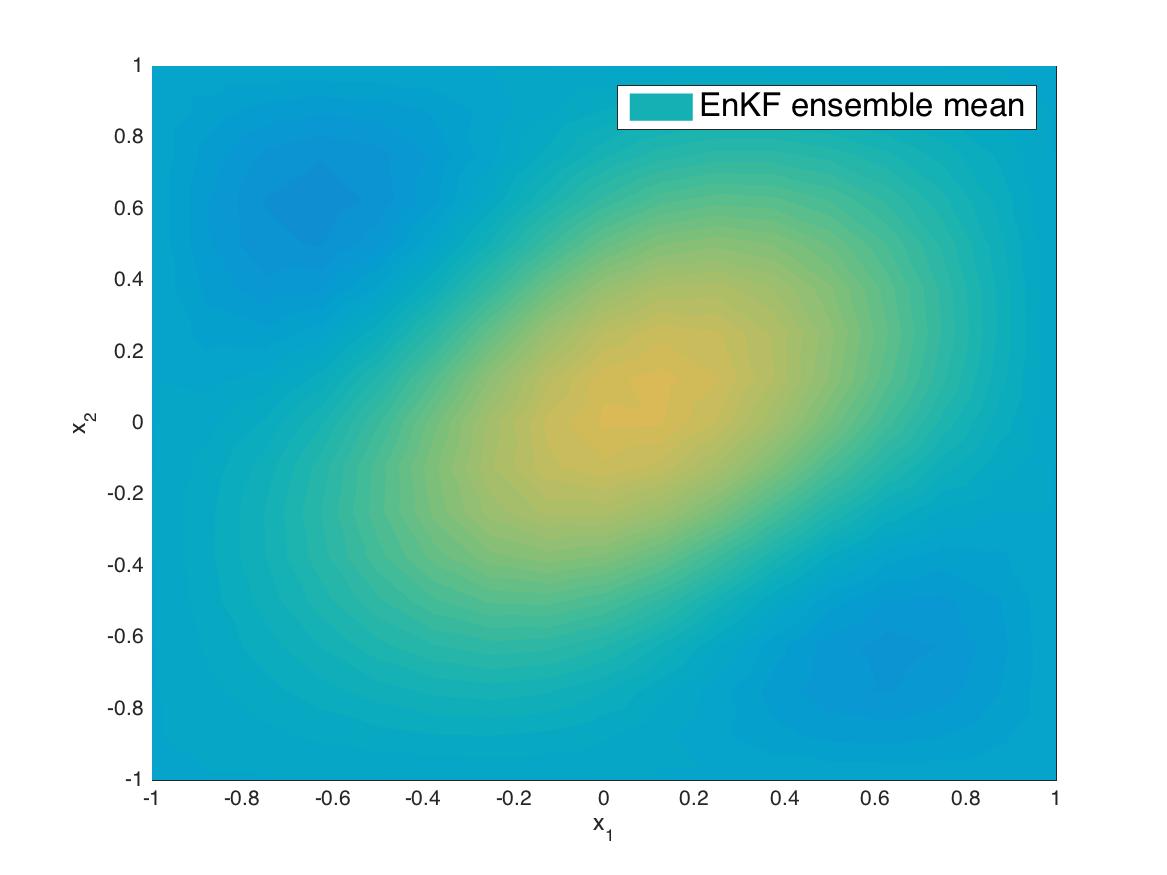}
 \hspace{0.0cm}
 \includegraphics[width=0.3\textwidth]{./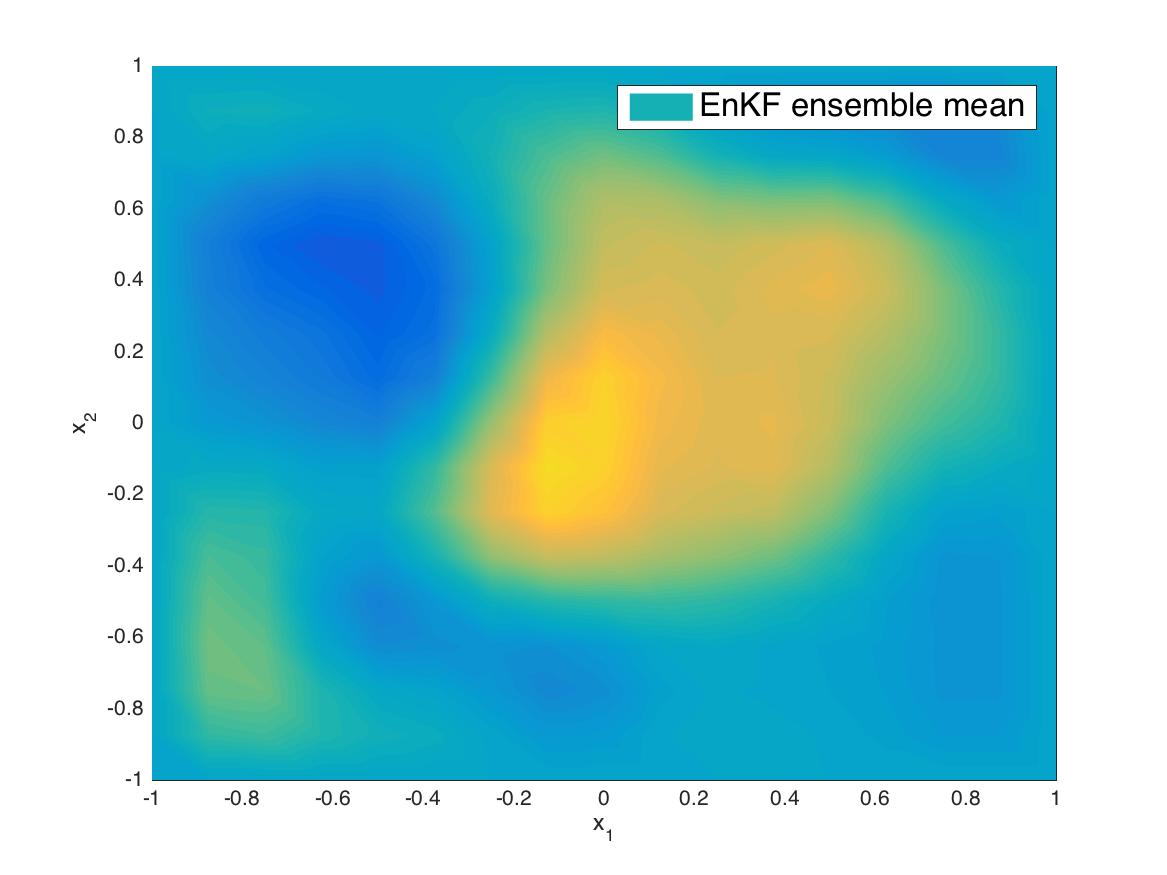}
   \includegraphics[width=0.3\textwidth]{./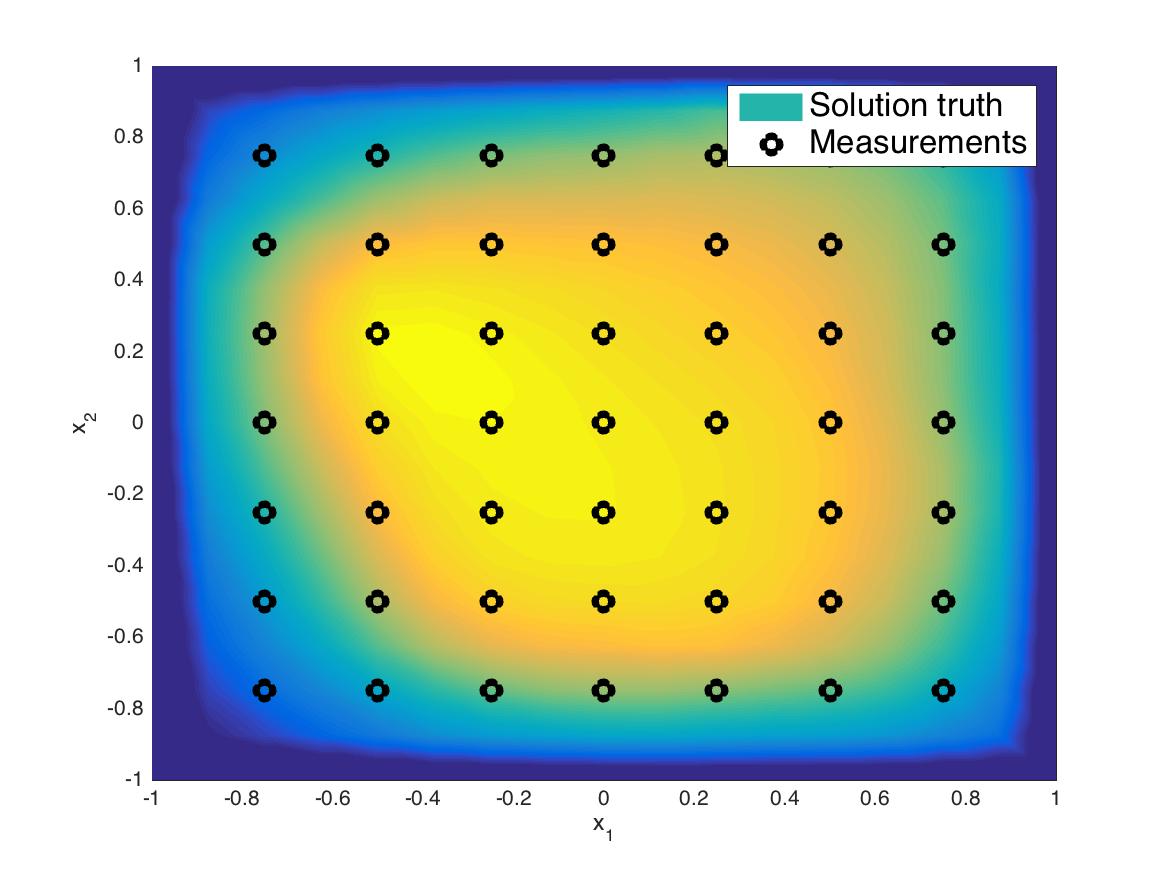}
\hspace{0.0cm}
    \includegraphics[width=0.3\textwidth]{./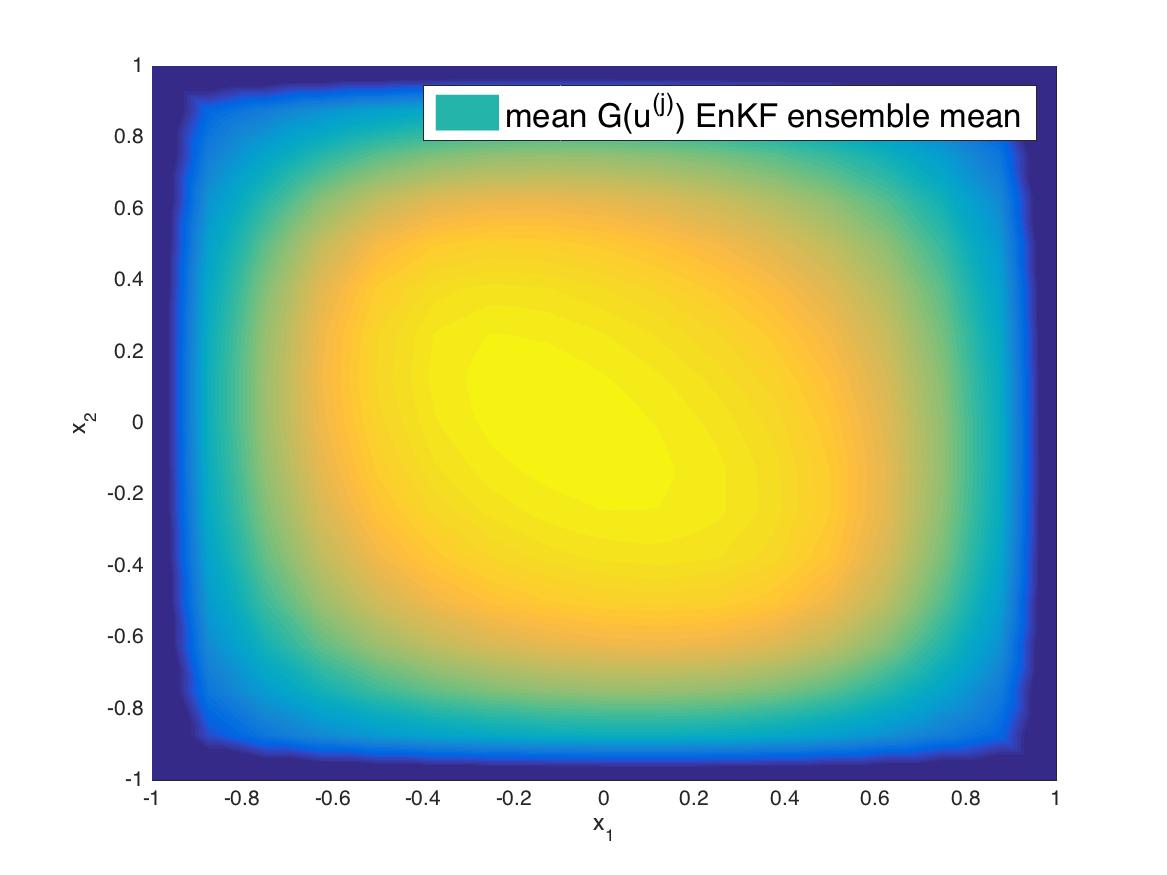}
\hspace{0.0cm}
    \includegraphics[width=0.3\textwidth]{./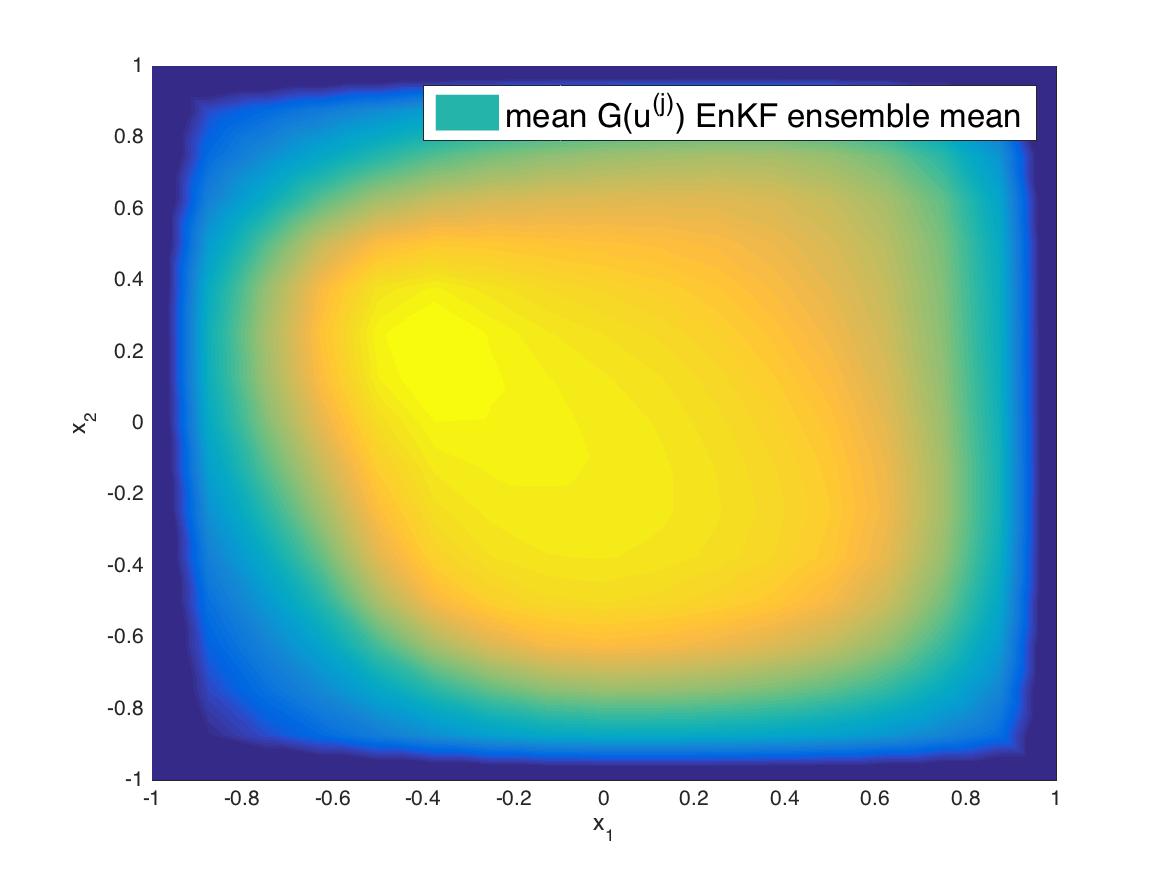}
{\footnotesize
~\\[-0.25cm]\caption{\footnotesize  \label{fig:nonrescomp2} Comparison of the truth (left above) and the EnKF estimate w.r. to $x$, J=5 (middle above) , $J=50$ (right above) and comparison of the forward solution $G(u^\dagger)$ (left below) and the estimated solutions of the forward problem $J=5$ (middle below), $J=50$ (right below). }}
\end{figure}
~\\[0.0cm]

\section{Variants on EnKF}
\label{sec:CV}

In this section we describe three variants on the EnKF, all formulated
in continuous time in order to facilitate comparison with the preceding
studies of the standard EnKF in continuous time. The first two
methods, variance inflation and localization, are commonly used
by practitioners in both the filtering and inverse problem
scenarios \cls{\cite{Sheikh,TELA:TELA216}}.
The third method, random search, is motivated by the
SMC derivation of the EnKF for inverse problems, and is new in the
context of the EnKF. For all three methods we provide numerical
results which illustrate the behavior of the EnKF variant, in
comparison with the standard method. \cls{In the following, we focus on the linear case with $\Sigma=0$. The methods from the first two subsections have
generalizations in the general nonlinear setting, and indeed are widely used
in data assimilation and, to some extent, in geophysical inverse problems;
but we present them in a form tied to the equation \eqref{eq:lode2} which is
derived in the linear case. The method in the final subsection is implemented
through an entirely derivative-free MCMC method, and is hence automatically
defined, as is, for nonlinear as well as linear inverse problems.} 

{
\subsection{Variance Inflation}

The empirical covariances $C^{up}$ and $C^{pp}$ all have rank no
greater than $J-1$ and hence are rank deficient whenever the number of
particles $J$ is less than the dimension of the space $X$.
Variance inflation proceeds by correcting such rank deficiencies by the addition of self-adjoint, strictly positive
operators.  A natural variance inflation
technique is to add a multiple of the prior covariance $C_0$ to the empirical
covariance which gives rise to the equations 
\begin{equation}\label{eq:uVI}
\frac{\dd u^{(j)}}{\dd t}=-\bigl(\alpha C_0+C(u)\bigr) D_u\Phi(u^{(j)};y),
\quad j=1,\dots, J\,,
\end{equation} 
where $\Phi$ is as defined in \eqref{eq:fil}.
Taking the inner-product in $X$ with $D_u\Phi(u^{(j)};y)$ we deduce that
\begin{equation}
\label{eq:ap2}
\frac{\dd \Phi(u^{(j)};y)}{\dd t} \le -\alpha \|C_0^{\frac12} D_u\Phi(u^{(j)};y)\|^2\;,
\end{equation}
This implies that all $\omega-$limit points of the dynamics are contained
in the critical points of $\Phi(\cdot;y).$

\subsection{Localization}

Localization techniques aim to remove spurious long distance correlations by modifying the covariance operators $C^{up}$ and $C^{pp}$, or directly the Kalman gain $C^{up}_{n+1}(C^{pp}_{n+1}+h^{-1}\Gamma)^{-1}$.
 Typical convolution kernels reducing the influence of distant regions are of the form
\begin{equation}
\begin{aligned}
\label{eq:add}
&\rho:D \times D \rightarrow \mathbb R\\
&\rho(x,y)=\exp(-|x-y|^r)\;,
\end{aligned}
\end{equation}
where $D\subset \mathbb R^d, \ d\in \mathbb N$ denotes the physical domain and $| \cdot |$ is a suitable norm in $D$,  $r \in \mathbb N$, cf. \cite{KLS}. The continuous time limit in the linear setting then reads as
 \begin{equation}
\label{eq:loc}
\frac{\dd u^{(j)}}{\dd t}=-C^{\rm loc}(u)  D_u\Phi(u^{(j)};y),
\quad j=1,\dots, J\,,
\end{equation} 
where $C^{\rm loc}(u) \phi(x)=\int_D \phi(y) k(x,y) \rho(x,y)\dd y\;$ with $k$ denoting the kernel of $C(u)$ and $\phi \in \cX$.

\subsection{Randomized Search}

We notice that the mapping on probability measures given by 
\eqref{eq:smc1} may be replaced by
\begin{equation}
\label{eq:smc2}
  {\mu_{n+1}=L_nP_n\mu_n}\; .
  \end{equation}
where $P_n$ is any Markov kernel which preserves $\mu_n.$ 
For example we may take $P_n$ to be the pCN method \cite{cotter2013mcmc} for 
measure $\mu_n$. One step of the pCN method for given particle $u^{(j)}_n$ 
in iteration $n$ is realized by
\begin{itemize}
\item Propose $v^{(j)}_n=\sqrt{(1-\beta^2)}u^{(j)}_n+\beta \iota^{(j)}$, $\iota^{(j)}\sim\mathcal N(0,C_0)$.
\item Set $\tilde u^{(j)}_{n}=v^{(j)}_n$  with probability $a(u^{(j)}_n, v^{(j)}_n)$.
\item Set $\tilde u^{(j)}_{n}=u^{(j)}_n$ otherwise
\end{itemize}
\cls{assuming the prior is Gaussian, i.e. $\mathcal N(0,C_0)$.}
The acceptance probability is given by 
$$a(u^{(j)}_n, v^{(j)}_n)=\min\{1,\exp(nh\Phi(u^{(j)}_n)-nh\Phi(v^{(j)}_n))\}.$$ 
The particles $\tilde u^{(j)}_{n}$ are used to approximate the measure $\tilde \mu_{n}=P_n\mu_n$, which is then mapped to $\mu_{n+1}$ by the application of Bayes' theorem, i.e. $\mu_{n+1}=L_n\tilde\mu_{n}$. 

\cls{Using the continuous-time diffusion limit arguments from \cite[Theorem 4]{PST11}, 
which apply in the nonlinear case, and combining with the continuous time
limits described for the EnKF earlier in this paper, we obtain}
\begin{equation}
\label{eq:dl}
\begin{split}
\frac{\dd u^{(j)}}{\dd t}=
\frac{1}{J}\sum_{k=1}^J \bigl\langle \cG(u^{(k)})-\bG,
y-\cG(u^{(j)})\bigr\rangle_{\Gamma}
\bigl(u^{(k)}-\bu\bigr)
\\
-u^{(j)}-t C_0 D_u\Phi(u^{(j)};y)+\sqrt{2C_0}\frac{\dd W^{(j)}}{\dd t}\;.
\end{split}
\end{equation}
\cls{Although the limiting equation involves gradients of $\Phi$, and hence
adjoints for the forward model, the discrete time implementation above
avoids the gradient computation by using the accept-reject step, and remains
a derivative free optimizer.}

\subsection{Numerical Results}

In the following, to illustrate behavior of the EnKF variants, we present 
numerical experiments for the linear forward problem in the noise-free case:
\eqref{eq:forward} and \eqref{eq:modelne} with $\eta=0$. The performance of the EnKF variants is compared to the basic algorithms shown in Figure \ref{fig:linNFr} and Figure \ref{fig:linNFsol}.

\subsubsection{Inflation}
We investigate the numerical behavior of variance inflation of the form
given in \eqref{eq:uVI} with $\alpha=0.01$.
Figures \ref{fig:linNFrinfl} and \ref{fig:linNFsolinfl} show that the variance inflated method becomes a preconditioned gradient flow, which, in the linear case, leads to fast convergence of the projected iterates.
It is noteworthy that in this case there is very little difference in
behavior between ensemble sizes of $5$ and $50$.

~\\[0.1cm]

 \begin{minipage}{0.45\textwidth}
 \begin{figure}[H]
\centering
    \includegraphics[width=\textwidth]{./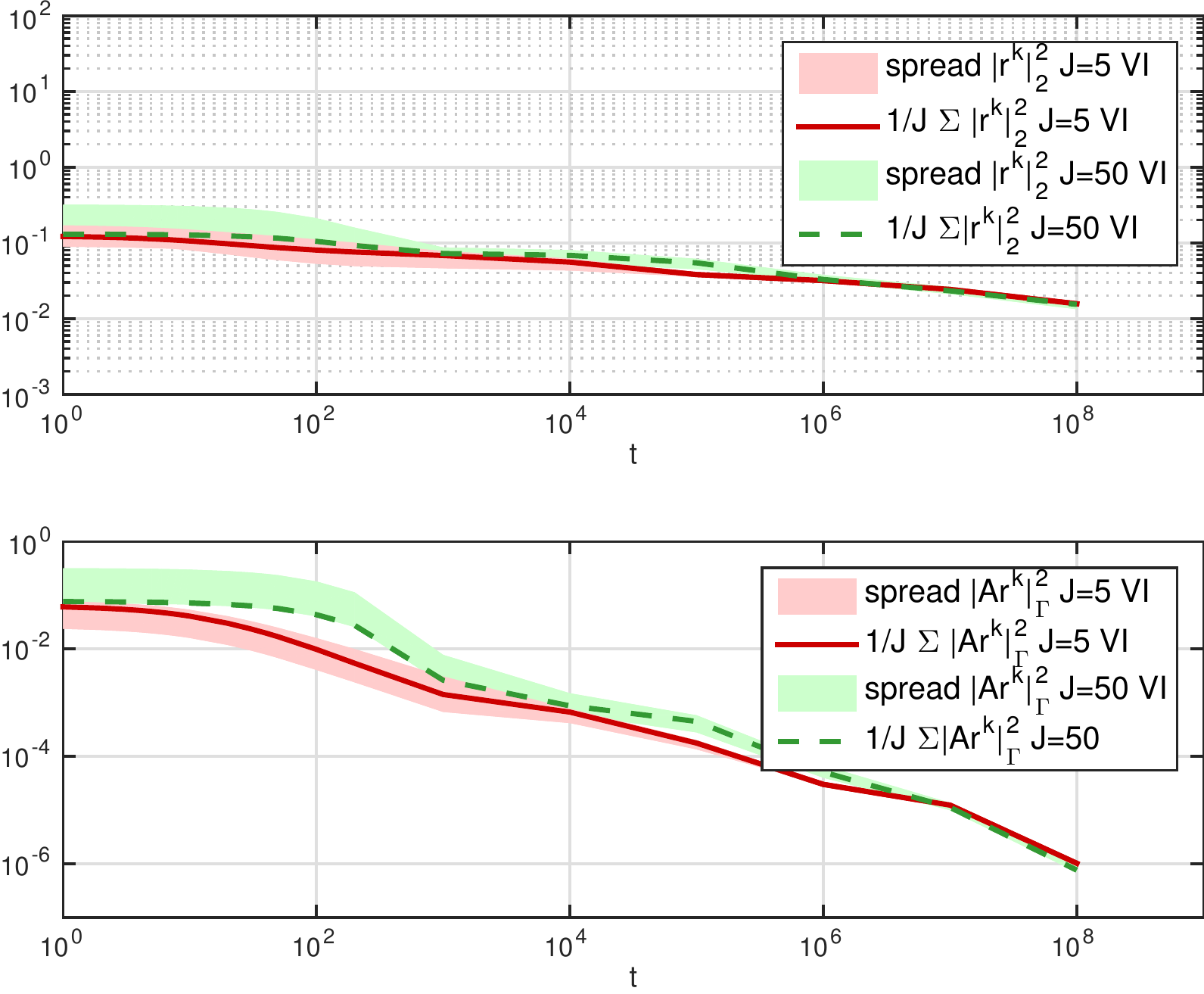}
~\\[-0.75cm]\caption{\footnotesize\label{fig:linNFrinfl}
Quantities $|r|_2^2$, $|Ar|_{\Gamma}^2$ w.r. to time $t$, $J=5$ with variance inflation (red) and $J=50$ with variance inflation (green),  $\beta=10$, $K=2^4-1$, initial ensemble chosen based on KL expansion of $\cls{C_0}=\beta(A-id)^{-1}$. \color{white}{Comparison of the EnKF }\color{black}}
 \end{figure}
\end{minipage}
 \begin{minipage}{0.025\textwidth}
 ~
 \end{minipage}
\begin{minipage}{0.45\textwidth}
 \begin{figure}[H]
\centering
    \includegraphics[width=\textwidth]{./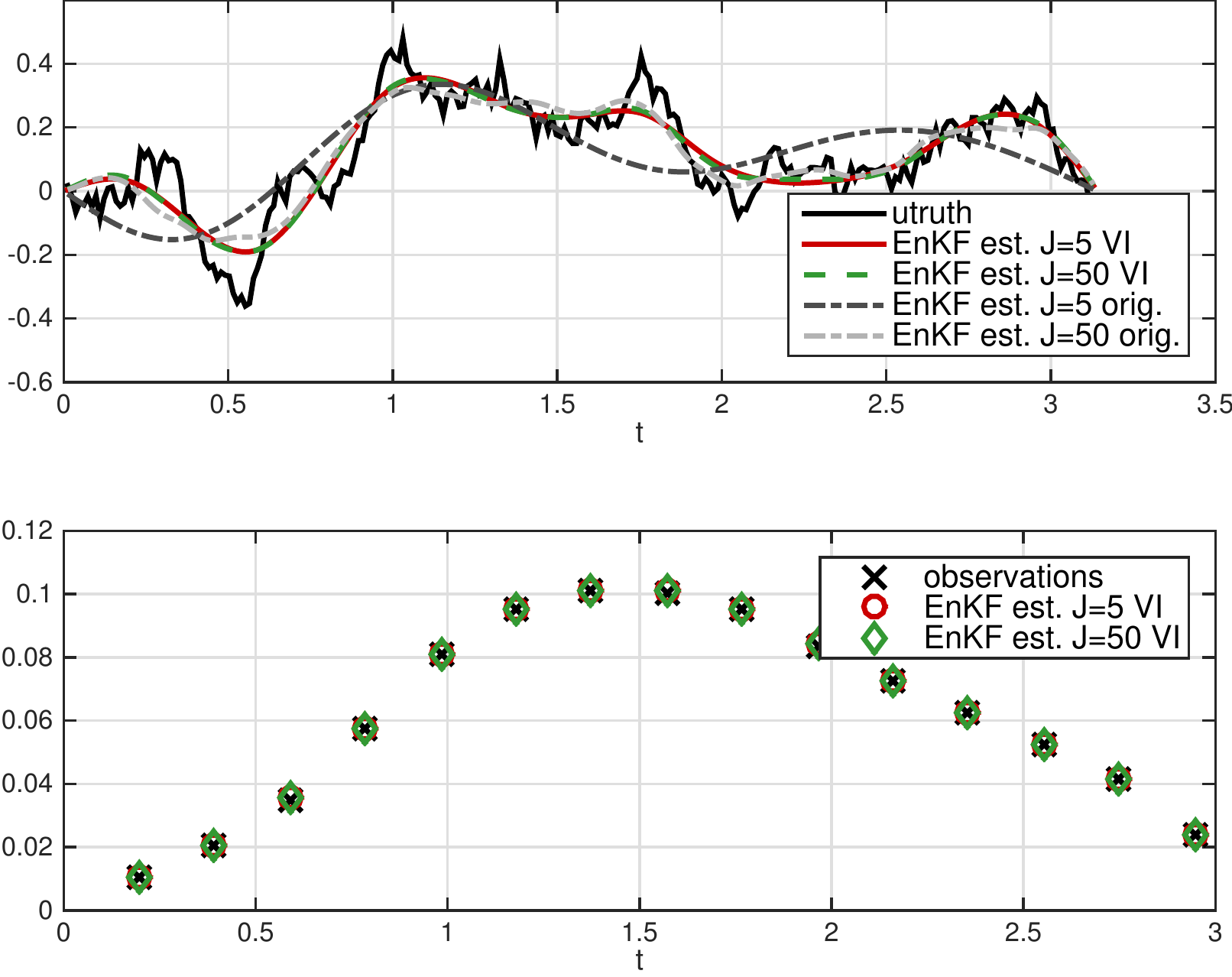}
~\\[-0.75cm]\caption{\footnotesize \label{fig:linNFsolinfl}
Comparison of the EnKF estimate with the truth and the observations, $J=5$ with variance inflation (red) and $J=50$ with variance inflation (green),  $\beta=10$, $K=2^4-1$, initial ensemble chosen based on KL expansion of $\cls{C_0}=\beta(A-id)^{-1}$. }
 \end{figure}
\end{minipage}
~\\[0.2cm]

\subsubsection{Localization}
We consider a localization of the form given 
by equations \eqref{eq:add}, \eqref{eq:loc} with 
\cls{$r=2$} and Euclidean norm inside the cut-off kernel. 
Figures \ref{fig:linNFrloc} - \ref{fig:linNFsolloc} clearly demonstrate the improvement by the localization technique, which can overcome the linear span property and thus, leads to better estimates of the truth.

 ~\\[0.1cm]
 
 \begin{minipage}{0.45\textwidth}
 \begin{figure}[H]
\centering
    \includegraphics[width=\textwidth]{./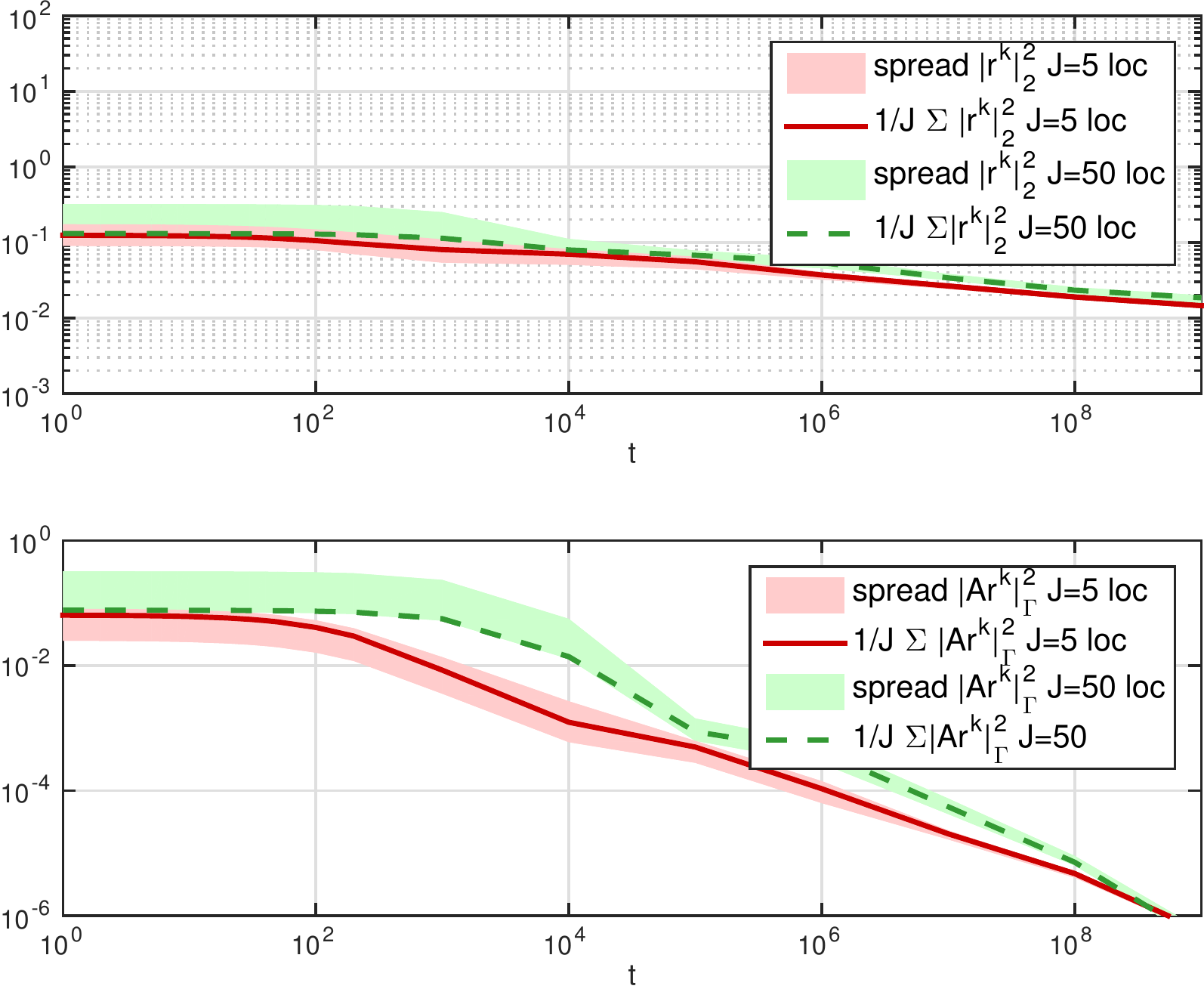}
~\\[-0.75cm]\caption{\footnotesize\label{fig:linNFrloc}
Quantities $|r|_2^2$, $|Ar|_{\Gamma}^2$ w.r. to time $t$, $J=5$ with localization (red) and $J=50$ with localization (green), $\beta=10$, $K=2^4-1$, initial ensemble chosen based on KL expansion of $\cls{C_0}=\beta(A-id)^{-1}$. \color{white}{Comparison of the EnKF }\color{black}}
 \end{figure}
\end{minipage}
 \begin{minipage}{0.025\textwidth}
 ~
 \end{minipage}
 \begin{minipage}{0.45\textwidth}
 \begin{figure}[H]
\centering
    \includegraphics[width=\textwidth]{./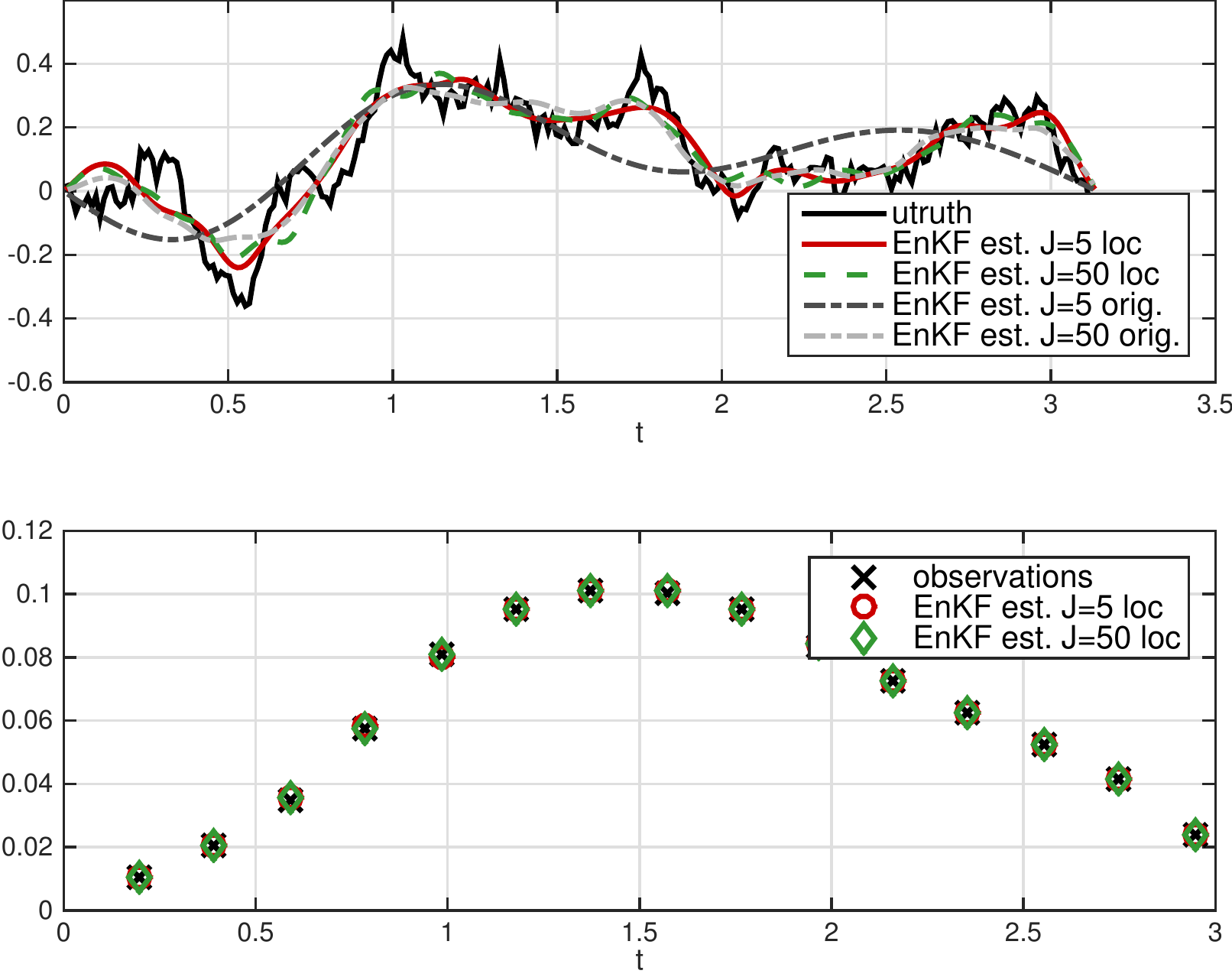}
~\\[-0.75cm]\caption{\footnotesize \label{fig:linNFsolloc}
Comparison of the EnKF estimate with the truth and the observations, $J=5$ with localization (red) and $J=50$ with localization (green),  $\beta=10$, $K=2^4-1$, initial ensemble chosen based on KL expansion of $\cls{C_0}=\beta(A-id)^{-1}$. }
 \end{figure}
\end{minipage}

\subsubsection{Randomized Search}

We investigate the behavior of randomized search for the 
linear problem with $\mathcal G(\cdot)=A\cdot$. 
For the numerical solution of the continuous limit \eqref{eq:dl},
we employ a splitting scheme with a linearly implicit Euler step, 
namely
\begin{eqnarray*}
\tilde u_{n+1}^{(j)}&=&\sqrt{1-2h}u_{n}^{(j)}+\sqrt{2h C_0} \zeta_n\\
Ku_{n+1}^{(j)}&=&\tilde u_{n+1}^{(j)}
+h(C(\tilde u_{n+1})A^*\Gamma^{-1}y^\dagger+nhC_0 A^*\Gamma^{-1}y^\dagger)\;,
\end{eqnarray*}
where $\zeta_n \sim N(0,id)$ and
\cls{$K:=I+h(C(\tilde u_{n+1}) A^*\Gamma^{-1} A+nhC_0 A^*\Gamma^{-1} A).$}
In all numerical experiments reported we take $h=2^{-8}$.
Figure \ref{fig:linNFrrsMM} and Figure \ref{fig:linNFsolrsMM} show that the randomized search leads to an improved performance compared to the \cls{original} EnKF method. Due to the fixed step size and the resulting high computational costs, the solution is computed up to time \cls{$T=100$}. \cls{In order to accelerate the numerical solution of the limit \eqref{eq:dl}, implicit schemes can be considered. Note that the limit requires the computation of the gradients, which is in practice undesirable. However, the limit reveals from a theoretical point of view important structure, whereas the discrete version is more suitable for applications.} The advantage of the randomized search is
apparent.

\subsubsection{Summary} 
The experiments show a similar performance for all discussed variants. The variance inflation technique and the localization variant both lead to gradient flows, which are, in the noise-free case, favorable due to the fast convergence. \cls{On the other hand, 
these strategies also accelerate the convergence of the ensemble to the mean 
(ensemble collapse) and may be considered less desirable for this reason. 
The randomized search preserves by construction the spread of the ensemble. A similar regularization effect is achieved by perturbing the observational data, see \eqref{eq:ode}. The variants all break the subspace property of the original version, which results in an improvement in the estimate.}

~\\[0.1cm]

 \begin{minipage}{0.45\textwidth}
 \begin{figure}[H]
\centering
    \includegraphics[width=\textwidth]{./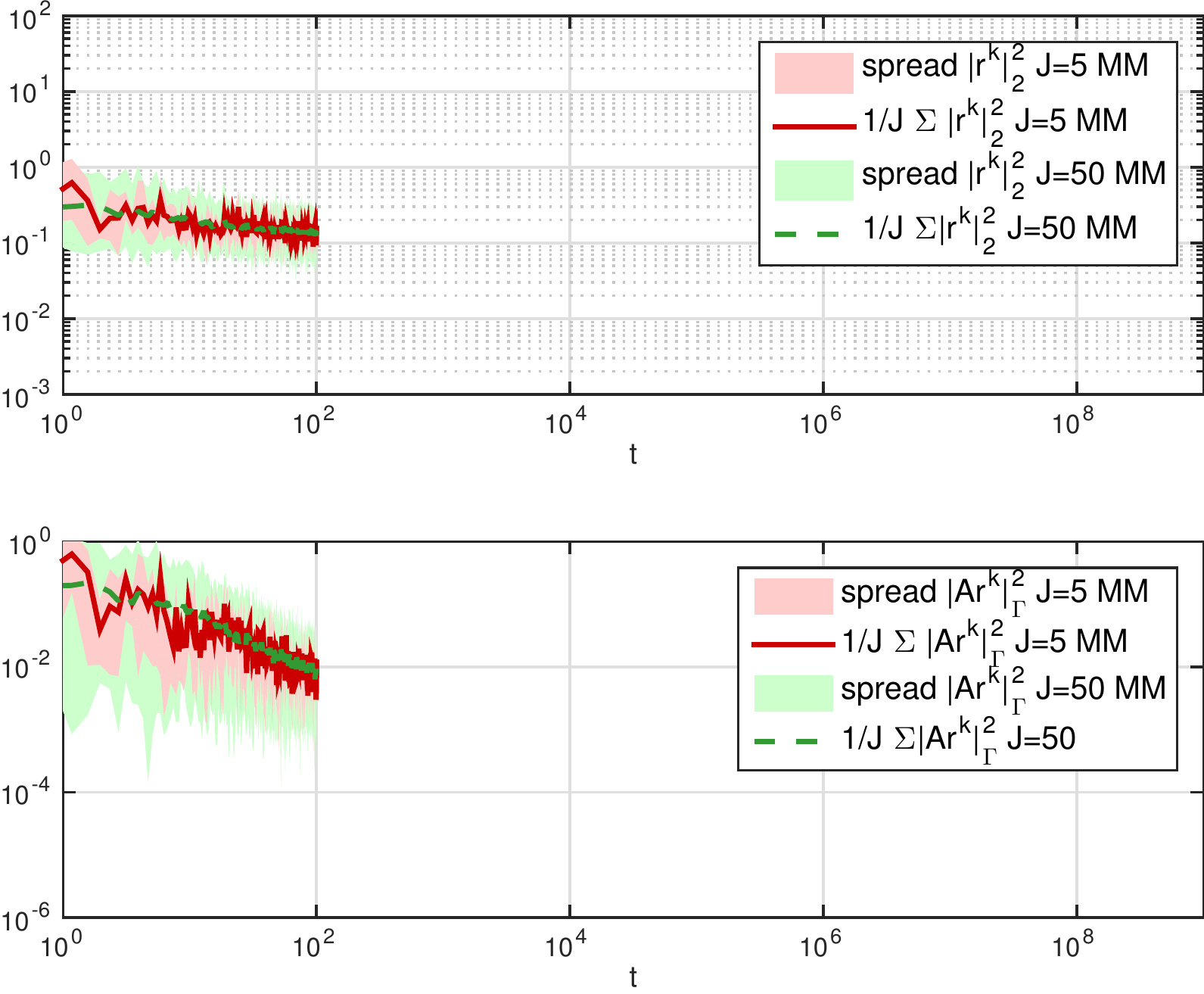}
~\\[-0.75cm]\caption{\footnotesize\label{fig:linNFrrsMM}
Quantities $|r|_2^2$, $|Ar|_{\Gamma}^2$ w.r. to time $t$, $J=5$ with randomized search (red) and $J=50$ with randomized search (green),  $\beta=10$, $K=2^4-1$, initial ensemble chosen based on KL expansion of $\cls{C_0}=\beta(A-id)^{-1}$. \color{white}{Comparison of the EnKF }\color{black}}
 \end{figure}
\end{minipage}
 \begin{minipage}{0.025\textwidth}
 ~
 \end{minipage}
\begin{minipage}{0.45\textwidth}
 \begin{figure}[H]
\centering
    \includegraphics[width=\textwidth]{./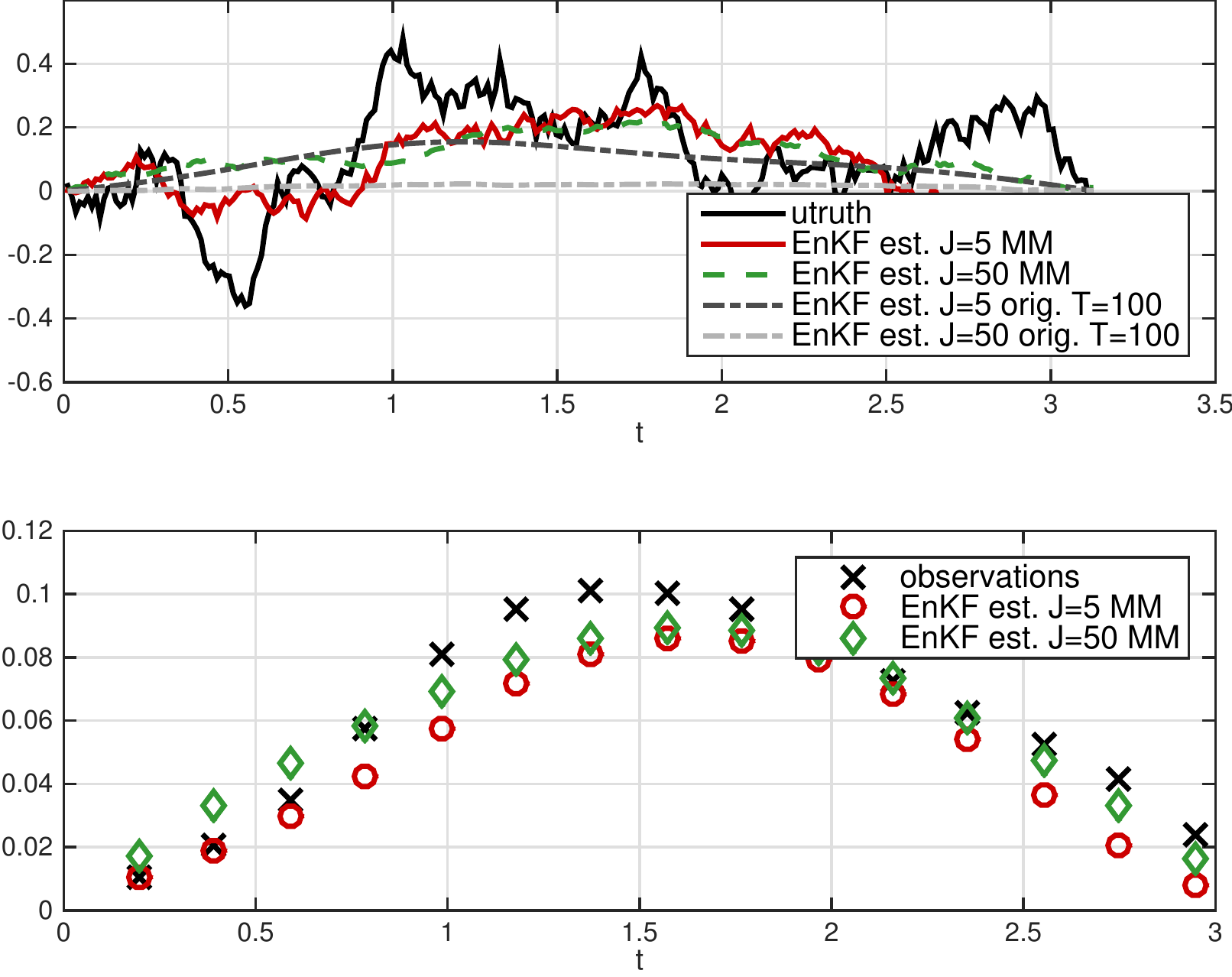}
~\\[-0.75cm]\caption{\footnotesize \label{fig:linNFsolrsMM}
Comparison of the EnKF estimate with the truth and the observations, $J=5$ with randomized search (red) and $J=50$ with randomized search (green), $\beta=10$, $K=2^4-1$, initial ensemble chosen based on KL expansion of $\cls{C_0}=\beta(A-id)^{-1}$. }
 \end{figure}
\end{minipage}
~\\[0.cm]

We study the same test case as in section \ref{sec:NEnoisy},
with the same realization of the measurement noise 
(cf. Figure \ref{fig:linNDrT1} and Figure \ref{fig:linNDsolT1}), to allow 
for a comparison of the three methods introduced in this
section. Combining the various techniques with the Bayesian stopping rule for noisy observations, we observe the following behavior given in Figure \ref{fig:linNFrrsnoise} and Figure \ref{fig:linNFsolrsnoise2}.

~\\[-0.3cm]
\begin{figure}[H]
\centering
    \includegraphics[width=0.75\textwidth]{./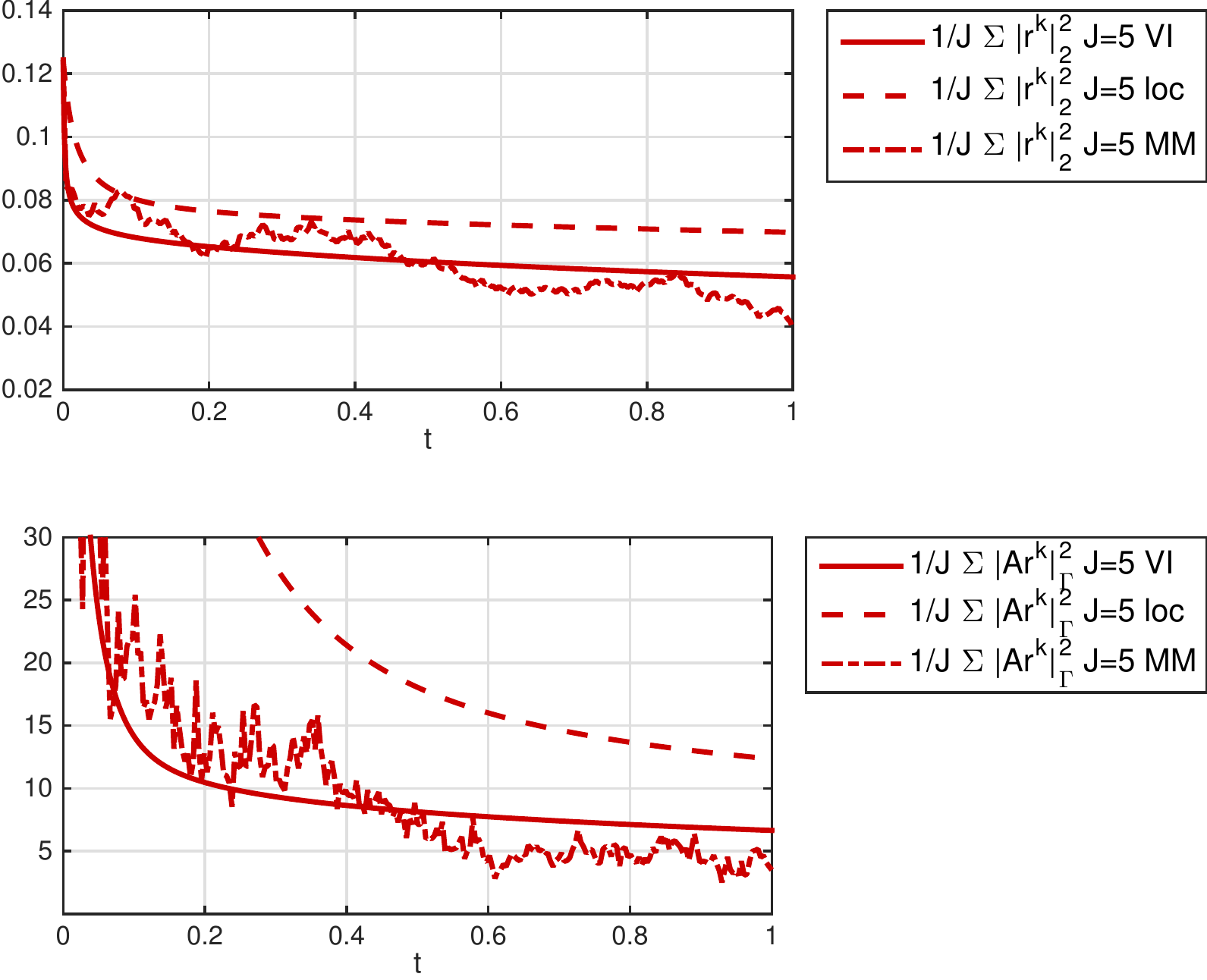}
~\\[-0.25cm]\caption{\footnotesize\label{fig:linNFrrsnoise}
Quantities $|r|_2^2$, $|Ar|_{\Gamma}^2$ w.r. to time $t$, $J=5$ (red) for the discussed variants, $\beta=10$, $\beta=10$, $K=2^4-1$, initial ensemble chosen based on KL expansion of $\cls{C_0}=\beta(A-id)^{-1}$. }
 \end{figure}
 In both figures, the abbreviation VI refers to variance inflation, loc denotes the localization technique and MM stands for the \cls{randomized search (Markov mixing)}.
The randomized search clearly outperforms the two other strategies and leads to a better estimate of the unknown data. \cls{In Figures \ref{fig:linNFrrsnoise} and \ref{fig:linNFsolrsnoise2}, one path of the solution of \eqref{eq:dl} is shown, similar performance can be observed for further paths.} This strategy has the potential to significantly improve the performance of the EnKF and will be investigated in more details in subsequent papers.

 \begin{figure}[H]
\centering
    \includegraphics[width=0.75\textwidth]{./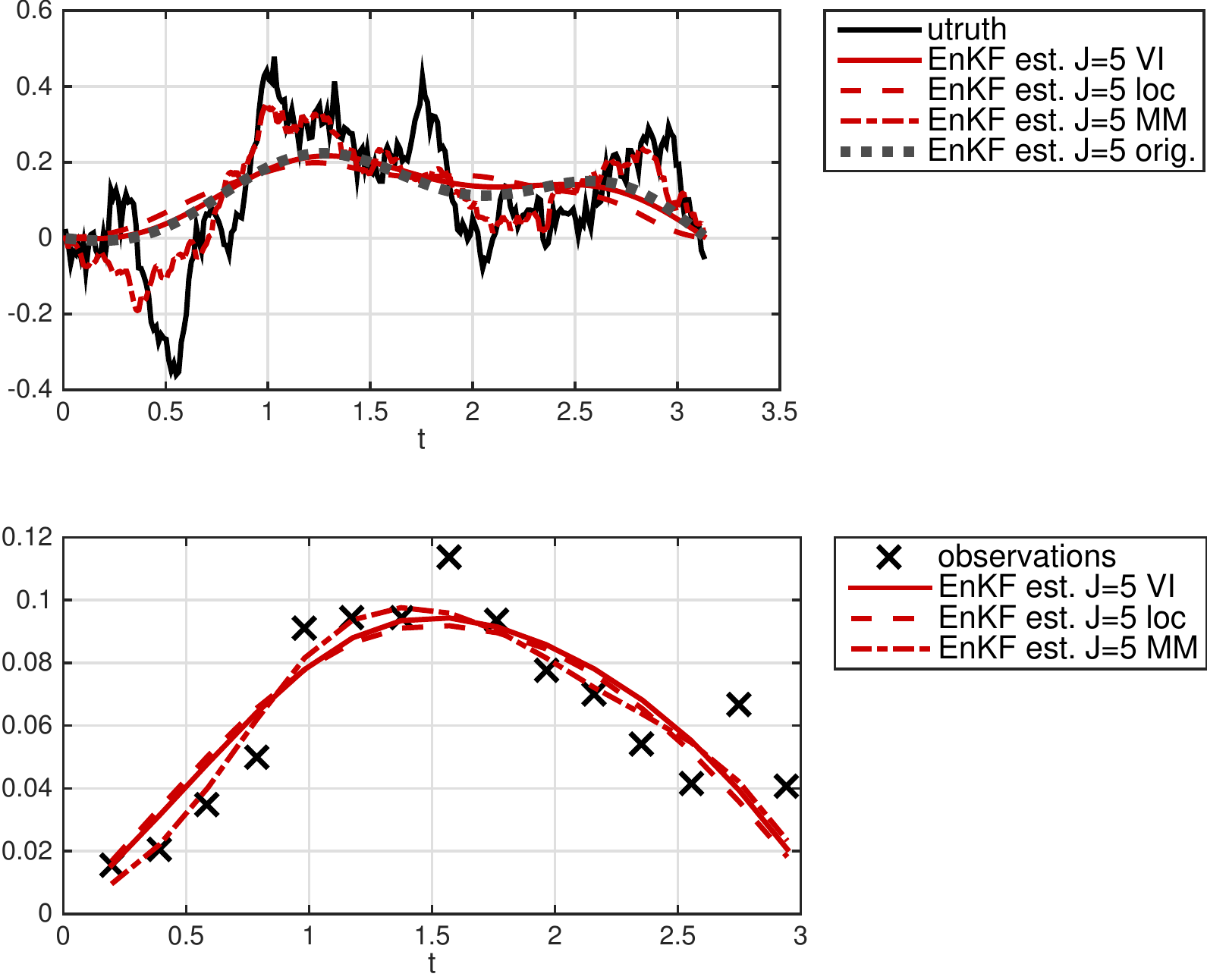}
~\\[-0.25cm]\caption{\footnotesize \label{fig:linNFsolrsnoise2}
Comparison of the EnKF estimate with the truth and the observations, $J=5$ (red) for the discussed variants, $\beta=10$, $K=2^4-1$, initial ensemble chosen based on KL expansion of $\cls{C_0}=\beta(A-id)^{-1}$. }
 \end{figure}
}

\section{Conclusions} Our analysis and numerical studies
for the ensemble Kalman filter applied to inverse problems demonstrate
several interesting properties: (i) the continuous time limit exhibits
structure that is hard to see in discrete time implementations used
in practice; (ii) in particular, for the linear inverse problem, it
reveals an underlying gradient flow structure; (iii) in the linear noise-free
case the method can be completely analyzed and this leads to a complete
understanding of error propagation; (iv) numerical results indicate that the conclusions
observed for linear problems carry over to nonlinear problems; (v) 
that the widely used localization and inflation techniques can improve
the method, but that the (introduced here for the first time) use of
ideas from SMC hold considerable promise for further improvement;
(vi) that importing stopping criteria and other regularization
techniques is crucial to the effectiveness of the method, as
highlighted by the work of Iglesias \cite{iglesias2014iterative,iglesias2015regularizing}. Our future work in this area, both theoretical and computational,
will reflect, and build on, these conclusions.

\vspace{0.2in}

\noindent{\bf Acknowledgments} Both authors are grateful to 
Dean Oliver for helpful advice, and to the 
EPSRC Programme Grant EQUIP for funding of this research. AMS is 
also grateful to DARPA and to ONR for funding parts of this research.\\[0.55cm]

%\section*{References}
\bibliographystyle{siamplain}

\bibliography{ref}

\begin{thebibliography}{10}

\bibitem{TELA:TELA216}
{\sc J.~Anderson}, {\em An adaptive covariance inflation error correction
  algorithm for ensemble filters}, Tellus A, 59 (2007), pp.~210--224.

\bibitem{bergemann2010localization}
{\sc K.~Bergemann and S.~Reich}, {\em A localization technique for ensemble
  {Kalman} filters}, Quarterly Journal of the Royal Meteorological Society, 136
  (2010), pp.~701--707.

\bibitem{bergemann2010mollified}
{\sc K.~Bergemann and S.~Reich}, {\em A mollified ensemble {Kalman} filter},
  Quarterly Journal of the Royal Meteorological Society, 136 (2010),
  pp.~1636--1643.

\bibitem{beskos2014sequential}
{\sc A.~Beskos, A.~Jasra, E.~Muzaffer, and A.~Stuart}, {\em Sequential {M}onte
  {C}arlo methods for {B}ayesian elliptic inverse problems}, arXiv preprint
  arXiv:1412.4459,  (2014).

\bibitem{bocquet2014iterative}
{\sc M.~Bocquet and P.~Sakov}, {\em An iterative ensemble {K}alman smoother},
  Quarterly Journal of the Royal Meteorological Society, 140 (2014),
  pp.~1521--1535.

\bibitem{cotter2013mcmc}
{\sc S.~Cotter, G.~Roberts, A.~Stuart, and D.~White}, {\em {MCMC methods for
  functions: modifying old algorithms to make them faster}}, Statistical
  Science, 28 (2013), pp.~424--446.

\bibitem{DashtiStuart}
{\sc M.~Dashti and A.~Stuart}, {\em The {B}ayesian approach to inverse
  problems}, arXiv preprint arXiv:1302.6989,  (2014).

\bibitem{Sheikh}
{\sc A.~ELSheikh, C.~Pain, F.~Fang, J.~Gomes, and I.~Navon}, {\em Parameter
  estimation of subsurface flow models using iterative regularized ensemble
  {Kalman} filter}, Stochastic Environmental Research and Risk Assessment, 27
  (2013), pp.~877--897.

\bibitem{engl1996regularization}
{\sc H.~Engl, M.~Hanke, and A.~Neubauer}, {\em Regularization of Inverse
  Problems}, vol.~375, Springer Science \& Business Media, 1996.

\bibitem{ernst2015analysis}
{\sc O.~Ernst, B.~Sprungk, and H.~Starkloff}, {\em Analysis of the ensemble and
  polynomial chaos {K}alman filters in {B}ayesian inverse problems}, arXiv
  preprint arXiv:1504.03529,  (2015).

\bibitem{evensen2003ensemble}
{\sc G.~Evensen}, {\em The ensemble {K}alman filter: Theoretical formulation
  and practical implementation}, Ocean dynamics, 53 (2003), pp.~343--367.

\bibitem{goldstein}
{\sc M.~Goldstein and D.~Wooff}, {\em Bayes linear statistics, theory and
  methods}, vol.~716, John Wiley \& Sons, 2007.

\bibitem{gratton2014convergence}
{\sc S.~Gratton, J.~Mandel, et~al.}, {\em On the convergence of a non-linear
  ensemble {K}alman smoother}, arXiv preprint arXiv:1411.4608,  (2014).

\bibitem{iglesias2014iterative}
{\sc M.~Iglesias}, {\em Iterative regularization for ensemble data assimilation
  in reservoir models}, Computational Geosciences,  (2014), pp.~1--36.

\bibitem{iglesias2013ensemble}
{\sc M.~Iglesias, K.~Law, and A.~Stuart}, {\em Ensemble {K}alman methods for
  inverse problems}, Inverse Problems, 29 (2013), p.~045001.

\bibitem{iglesias2015regularizing}
{\sc M.~A. Iglesias}, {\em A regularizing iterative ensemble {Kalman} method
  for pde-constrained inverse problems}, arXiv preprint arXiv:1505.03876,
  (2015).

\bibitem{kalnay2003atmospheric}
{\sc E.~Kalnay}, {\em Atmospheric Modeling, Data Assimilation, and
  Predictability}, Cambridge university press, 2003.

\bibitem{kantas2014sequential}
{\sc N.~Kantas, A.~Beskos, and A.~Jasra}, {\em Sequential {Monte Carlo} methods
  for high-dimensional inverse problems: a case study for the {Navier--Stokes}
  equations}, SIAM/ASA Journal on Uncertainty Quantification, 2 (2014),
  pp.~464--489.

\bibitem{kelly2014well}
{\sc D.~Kelly, K.~Law, and A.~Stuart}, {\em Well-posedness and accuracy of the
  ensemble {K}alman filter in discrete and continuous time}, Nonlinearity, 27
  (2014), p.~2579.

\bibitem{kwiatkowski2015convergence}
{\sc E.~Kwiatkowski and J.~Mandel}, {\em Convergence of the square root
  ensemble {K}alman filter in the large ensemble limit}, SIAM/ASA Journal on
  Uncertainty Quantification, 3 (2015), pp.~1--17.

\bibitem{KLS}
{\sc K.~Law, A.~Stuart, and K.~Zygalakis}, {\em Data Assimilation: A
  Mathematical Introduction}, Springer, 2015.

\bibitem{law2016deterministic}
{\sc K.~J. Law, H.~Tembine, and R.~Tempone}, {\em Deterministic mean-field
  ensemble {K}alman filtering}, SIAM Journal on Scientific Computing, 38
  (2016), pp.~A1251--A1279.

\bibitem{le2009large}
{\sc F.~Le~Gland, V.~Monbet, and V.-D. Tran}, {\em {Large sample asymptotics
  for the ensemble {Kalman} filter}}, Research Report RR-7014, {INRIA}, 2009,
  \url{https://hal.inria.fr/inria-00409060}.

\bibitem{li2008numerical}
{\sc J.~Li and D.~Xiu}, {\em On numerical properties of the ensemble {K}alman
  filter for data assimilation}, Computer Methods in Applied Mechanics and
  Engineering, 197 (2008), pp.~3574--3583.

\bibitem{oliver2008inverse}
{\sc D.~Oliver, A.~Reynolds, and N.~Liu}, {\em Inverse theory for petroleum
  reservoir characterization and history matching}, Cambridge University Press,
  2008.

\bibitem{PST11}
{\sc N.~S. Pillai, A.~M. Stuart, and A.~H. Thi{\'e}ry}, {\em Noisy gradient
  flow from a random walk in {H}ilbert space}, Stochastic Partial Differential
  Equations: Analysis and Computations, 2 (2014), pp.~196--232.

\bibitem{stordal}
{\sc A.~S. {Stordal} and A.~H. {Elsheikh}}, {\em {Iterative ensemble smoothers
  in the annealed importance sampling framework}}, Advances in Water Resources,
  86 (2015), pp.~231--239,
  \href{http://dx.doi.org/10.1016/j.advwatres.2015.09.030}{doi:\nolinkurl{10.1016/j.advwatres.2015.09.030}}.

\bibitem{tongnonlinear}
{\sc X.~Tong, A.~Majda, and D.~Kelly}, {\em Nonlinear stability and ergodicity
  of ensemble based {K}alman filters}, arXiv preprint arXiv:1507.08307v1,
  (2015).

\end{thebibliography}

\section*{Appendix}\label{appendix}

\begin{lemma}
\label{lem:added}
The deviations from the mean $e^{(j)}$ and the deviations from
the truth $r^{(j)}$ satisfy
\begin{equation}
\label{eq:e}
\frac{\dd  e^{(j)}}{\dd t}= -\frac{1}{J}\sum_{k=1}^J E_{jk}e^{(k)}=-\frac{1}{J}\sum_{k=1}^J E_{jk}r^{(k)}
\end{equation}
and
\begin{equation}
\label{eq:r}
\frac{\dd  r^{(j)}}{\dd t}= -\frac{1}{J}\sum_{k=1}^J F_{jk}e^{(k)}=-\frac{1}{J}\sum_{k=1}^J F_{jk}r^{(k)}.
\end{equation}
\end{lemma}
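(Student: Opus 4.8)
The plan is to work directly from the evolution equation \eqref{eq:lode} and the definitions $e^{(j)}=u^{(j)}-\bar u$, $r^{(j)}=u^{(j)}-u^\dagger$, $\bar r:=\bar u-u^\dagger$. First I would rewrite the right-hand side of \eqref{eq:lode}: since $y=Au^\dagger$ we have $y-Au^{(j)}=-Ar^{(j)}$, so
\[
\frac{\dd u^{(j)}}{\dd t}=-\frac1J\sum_{k=1}^J\langle Ar^{(j)},Ae^{(k)}\rangle_\Gamma\,e^{(k)}=-\frac1J\sum_{k=1}^J F_{jk}e^{(k)},
\]
which is exactly \eqref{eq:u}. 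Because $u^\dagger$ is constant in time, $\dd r^{(j)}/\dd t=\dd u^{(j)}/\dd t$, so this already yields the first equality in \eqref{eq:r}.

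Next I would record two elementary identities. Since $\sum_k e^{(k)}=0$ by definition of the ensemble mean, linearity of $A$ and of the inner product give $\sum_k E_{jk}=\langle Ae^{(j)},A\sum_k e^{(k)}\rangle_\Gamma=0$ and likewise $\sum_k F_{jk}=\langle Ar^{(j)},A\sum_k e^{(k)}\rangle_\Gamma=0$ (these are the relations $E\one=F\one=0$ already noted, using symmetry of $E$). Writing $r^{(k)}=e^{(k)}+\bar r$, replacing $e^{(k)}$ by $r^{(k)}$ inside the sums $\sum_k E_{jk}e^{(k)}$ and $\sum_k F_{jk}e^{(k)}$ changes them by $\bar r\sum_k E_{jk}=0$ and $\bar r\sum_k F_{jk}=0$ respectively; this gives the two "$=\dots r^{(k)}$" equalities in \eqref{eq:e} and \eqref{eq:r} once the first equality in each is known.

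It thus remains to prove the first equality in \eqref{eq:e}. Here I would use the splitting $Ar^{(j)}=Ae^{(j)}+A\bar r$, so that $F_{jk}=E_{jk}+\langle A\bar r,Ae^{(k)}\rangle_\Gamma$. Substituting into the displayed equation for $\dd u^{(j)}/\dd t$ gives
\[
\frac{\dd u^{(j)}}{\dd t}=-\frac1J\sum_{k=1}^J E_{jk}e^{(k)}+v,\qquad v:=-\frac1J\sum_{k=1}^J\langle A\bar r,Ae^{(k)}\rangle_\Gamma\,e^{(k)},
\]
and the crucial observation is that $v$ does not depend on $j$. Averaging over $j$ and using $\sum_j E_{jk}=0$ yields $\dd\bar u/\dd t=v$, whence
\[
\frac{\dd e^{(j)}}{\dd t}=\frac{\dd u^{(j)}}{\dd t}-\frac{\dd\bar u}{\dd t}=-\frac1J\sum_{k=1}^J E_{jk}e^{(k)},
\]
which is the first equality in \eqref{eq:e}; combining with the previous step completes the proof. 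The only mildly delicate point is precisely the $j$-independence of $v$, i.e.\ the fact that the "$\bar r$ part" of the dynamics is invisible to the centred variables $e^{(j)}$; everything else is bookkeeping with the two summation identities.
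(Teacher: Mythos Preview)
Your argument is correct and follows essentially the same route as the paper: you derive $\dd r^{(j)}/\dd t=\dd u^{(j)}/\dd t=-\frac1J\sum_k F_{jk}e^{(k)}$ directly from \eqref{eq:lode} and $y=Au^\dagger$, compute $\dd\bar u/\dd t$ (the paper does this by direct averaging of \eqref{eq:lode}, you by splitting $F_{jk}=E_{jk}+\langle A\bar r,Ae^{(k)}\rangle_\Gamma$ and then averaging, which amounts to the same calculation), subtract to get \eqref{eq:e}, and use $E\one=F\one=0$ together with $r^{(k)}=e^{(k)}+\bar r$ for the second equalities. The only difference is the order in which you perform the splitting and the averaging, which is immaterial.
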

\begin{proof}
Recall \eqref{eq:lode}: 
   \begin{equation}
   \frac{\dd u^{(j)}}{\dd t}=\frac{1}{J}\sum_{k=1}^J \bigl\langle A(u^{(k)}-\bu),
y-Au^{(j)}\bigr\rangle_{\Gamma} \bigl(u^{(k)}-\bu\bigr), \quad j=1,\cdots, J.
  \end{equation}
From this it follows that
\begin{equation}
\frac{\dd  \bu}{\dd t}= -\frac{1}{J}\sum_{k=1}^J \langle A(\bu-\ud), 
Ae^{(k)} \rangle{_\Gamma} e^{(k)}.
\end{equation}
Hence \eqref{eq:e} follows, with the second identity following from
the fact that $E\one=0.$ 
Since $\ud$ is time-independent we also have that \eqref{eq:r}
follows, with the second identity now following from
the fact that $F\one=0.$ 
\end{proof}

\begin{lemma} \label{l:EF}
Assume that $y$ is the image of a truth $\ud \in \cX$ under $A.$
The matrices $E$ and $F$ satisfy the equations
\begin{eqnarray*}
   \frac{\dd  }{\dd t}E=-\frac2JE^2, \quad \frac{\dd  }{\dd t}F=-\frac2JFE,
\quad \frac{\dd}{\dd t}R=-\frac2J FF^T.
     \end{eqnarray*}
As a consequence both $E$ and $F$ satisfy a global-in-time a
priori bound, depending only on initial conditions.
Explicitly we have the following. For the orthogonal matrix $X$ defined through
the eigendecomposition of $E(0)$ it follows that 
       \begin{eqnarray} \label{eq:diagE}
  E(t)=X\Lambda(t)X^{{\top}}\;
     \end{eqnarray}
     with
$\Lambda(t)=\mbox{diag}\{ \lambda^{(1)}(t),\ldots,\lambda^{(J)}(t)\}$,
$\Lambda(0)=\mbox{diag}\{ \lambda_0^{(1)},\ldots,\lambda_0^{(J)}\}$
and
            \begin{eqnarray} \label{eq:odeenslambda}
   \lambda^{(j)}( t)=\Big({\frac 2J t+\frac{1}{\lambda_0^{(j)}}}\Big)^{-1}\;,
     \end{eqnarray}
     if $ \lambda_0^{(j)}\neq 0$, otherwise $ \lambda^{(j)}(t)= 0$.
The matrix $R$ satisfies ${\rm Tr}\bigl(R(t)\bigr) \le {\rm Tr}\bigl(R(0)\bigr)$
for all $t \ge 0$, and 
$F_{ij} \rightarrow 0$ at least as fast as {$\frac{1}{\sqrt{t}}$} as $t \rightarrow \infty$ for each $i,j$ and, in particular, is bounded uniformly in time.
\end{lemma}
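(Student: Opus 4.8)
The plan is to derive the three matrix differential equations by differentiating the definitions of $E$, $F$, $R$ and inserting the evolution equations \eqref{eq:e}--\eqref{eq:r} for the ensemble deviations. For $E_{lj}=\langle Ae^{(l)},Ae^{(j)}\rangle_\Gamma$ one has $A\frac{\dd e^{(j)}}{\dd t}=-\frac1J\sum_k E_{jk}Ae^{(k)}$, so the product rule produces two terms, each equal to $-\frac1J(E^2)_{lj}$ after using symmetry of $E$, which gives $\frac{\dd}{\dd t}E=-\frac2J E^2$. The same manipulation applied to $F_{lj}=\langle Ar^{(l)},Ae^{(j)}\rangle_\Gamma$ and $R_{lj}=\langle Ar^{(l)},Ar^{(j)}\rangle_\Gamma$, using in addition $A\frac{\dd r^{(j)}}{\dd t}=-\frac1J\sum_k F_{jk}Ae^{(k)}$, yields $\frac{\dd}{\dd t}F=-\frac2J FE$ and $\frac{\dd}{\dd t}R=-\frac2J FF^\top$. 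These are routine calculations; the only point deserving care is that $E$ and $R$ are Gram matrices of $\{Ae^{(j)}\}$ and $\{Ar^{(j)}\}$ in the $\Gamma$-inner product, hence symmetric and positive semi-definite along the (a priori only local-in-time) solution, and this is all that is used below.

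Next I would integrate the $E$-equation in closed form. Writing $E(0)=X\Lambda(0)X^\top$ with $X$ orthogonal and $\Lambda(0)=\diag\{\lambda_0^{(1)},\dots,\lambda_0^{(J)}\}$, the ansatz $E(t)=X\Lambda(t)X^\top$ reduces the matrix Riccati equation to the decoupled scalar equations $\frac{\dd}{\dd t}\lambda^{(j)}=-\frac2J(\lambda^{(j)})^2$, whose solutions are $\lambda^{(j)}(t)=\bigl(\frac2J t+\frac1{\lambda_0^{(j)}}\bigr)^{-1}$ when $\lambda_0^{(j)}>0$ and $\lambda^{(j)}(t)\equiv 0$ when $\lambda_0^{(j)}=0$. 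Positive semi-definiteness of $E(0)$ gives $\lambda_0^{(j)}\ge 0$, so each $\lambda^{(j)}(t)$ stays finite, non-negative and non-increasing, with $\lambda^{(j)}(t)\le\min\{\lambda_0^{(j)},\tfrac{J}{2t}\}$; in particular $\|E(t)\|\le\|E(0)\|$ uniformly in $t$, which is precisely the bound invoked in the proof of Theorem \ref{t:1} to exclude finite-time blow-up of $u$. For $R$, taking the trace of the $R$-equation gives $\frac{\dd}{\dd t}{\rm Tr}\bigl(R\bigr)=-\frac2J\|F\|_F^2\le 0$, hence ${\rm Tr}\bigl(R(t)\bigr)\le{\rm Tr}\bigl(R(0)\bigr)$ for all $t\ge 0$.

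For $F$ I would argue in two steps. Uniform boundedness follows straight from the $F$-equation: $\frac{\dd}{\dd t}\|F\|_F^2=-\frac4J{\rm Tr}\bigl(EF^\top F\bigr)\le 0$ because $E$ and $F^\top F$ are both positive semi-definite, so $\|F(t)\|_F\le\|F(0)\|_F$ for all $t$. For the decay rate I would avoid analysing the linear equation $\frac{\dd}{\dd t}F=-\frac2J FE$ directly and instead apply Cauchy--Schwarz in the $\Gamma$-inner product: $|F_{ij}|=|\langle Ar^{(i)},Ae^{(j)}\rangle_\Gamma|\le\sqrt{R_{ii}}\,\sqrt{E_{jj}}$. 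Here $R_{ii}\le{\rm Tr}\bigl(R\bigr)\le{\rm Tr}\bigl(R(0)\bigr)$ by the previous step, while the explicit formula for $\Lambda(t)$ together with orthogonality of $X$ gives $E_{jj}=\sum_k X_{jk}^2\lambda^{(k)}(t)\le\max_k\lambda^{(k)}(t)\le\frac{J}{2t}$. Combining, $|F_{ij}|\le\bigl(\tfrac{J}{2t}\,{\rm Tr}\bigl(R(0)\bigr)\bigr)^{1/2}$, which is exactly the asserted $\mathcal O(t^{-1/2})$ rate for each entry.

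I do not expect a genuine obstacle here: all three matrix ODEs are elementary and the $E$-equation integrates explicitly. The one subtlety is logical rather than technical --- the matrix ODEs, and hence the closed form for $E$, are at first valid only on the maximal interval of existence of the ensemble, so one derives the a priori bounds there and then feeds them into the blow-up argument of Theorem \ref{t:1}; once global existence is established, all the stated identities and decay estimates hold on $[0,\infty)$.
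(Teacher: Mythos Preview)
Your proof is correct and follows essentially the same route as the paper: differentiate the Gram-matrix entries using the $e$- and $r$-equations to obtain the three matrix ODEs, diagonalize $E(0)$ to solve the Riccati equation for $E$ explicitly, take the trace of the $R$-equation for the bound on ${\rm Tr}(R)$, and then use Cauchy--Schwarz $|F_{ij}|\le\sqrt{R_{ii}}\sqrt{E_{jj}}$ together with the explicit decay of $E_{jj}$ to get the $t^{-1/2}$ rate for $F$. Your additional monotonicity argument for $\|F\|_F$ and the remark about the bootstrapping with Theorem~\ref{t:1} are not in the paper but are welcome clarifications.
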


\begin{proof}
The first equation may be derived as follows:
     \begin{eqnarray*}
   \big(\frac{\dd  }{\dd t}E\big)_{ij}&=&\frac{\dd  }{\dd t} \langle Ae^{(i)},Ae^{(j)}\rangle_{\Gamma}\\
  &=&-\frac1J \sum_{k=1}^J E_{ik}\langle Ae^{(k)},Ae^{(j)}\rangle_{\Gamma}-\frac1J \sum_{k=1}^J E_{jk}\langle Ae^{(i)},Ae^{(k)}\rangle_{\Gamma}\\
  &=&-\frac2J \sum_{k=1}^J E_{ik}E_{kj}
   \end{eqnarray*}
as required.
The second equation follows similarly:
\begin{eqnarray}
   \big(\frac{\dd  }{\dd t}F\big)_{ij}&=&\langle A{\frac{\dd  }{\dd t}} r^{(i)},Ae^{(j)}\rangle_{\Gamma}+ \langle Ar^{(i)},A{\frac{\dd  }{\dd t}} e^{(j)}\rangle_{\Gamma}\nonumber\\
        &=&      -\frac1J \sum_{k=1}^J F_{ik}E_{kj}-\frac1J \sum_{k=1}^J F_{ik}E_{kj}\label{eq:Fglob}\;,
 \end{eqnarray}
as required; here we have used the fact that $F_{kj}-E_{kj}$ is independent of $k$
and hence, since $F\one=0$, 
$$\sum_{k=1}^J F_{ik}E_{kj}=\sum_{k=1}^J F_{ik}F_{kj}.$$

Due to the symmetry (and positive {semi}definiteness) of $E$, 
$E(0)$ is diagonalizable by orthogonal matrices, that is
$E(0)=X\Lambda(0)X^{{\top}}$, where 
$\Lambda(0)=\mbox{diag}\{ \lambda^{(1)}_0,\ldots,{\lambda^{(J)}_0}\}$. 
The solution of the ODE for $E(t)$ is therefore given by
       \begin{eqnarray} \label{eq:diagE2}
  E(t)=X\Lambda(t)X^{{\top}}\;
     \end{eqnarray}
     with $\Lambda(t)$ satisfying the following decoupled ODE
       \begin{eqnarray} \label{eq:odeenslambda2}
   \frac{\dd  \lambda^{(j)}}{\dd t}=-\frac2J(\lambda^{(j)})^2\;.
     \end{eqnarray}
     The solution of \eqref{eq:odeenslambda2} is thus given by
            \begin{eqnarray} \label{eq:odeenslambda3}
   \lambda^{(j)}( t)=\big({\frac 2J t+\frac{1}{\lambda^{(j)}_0}}\big)^{-1}\;,
     \end{eqnarray}
     if $ \lambda^{(j)}_0\neq 0$, otherwise $ \lambda^{(j)}(t)= 0$.
     The behavior of $R$ is described by
   \begin{eqnarray*}
   \big(\frac{\dd  }{\dd t}R\big)_{ij}&=&\langle A\frac{\dd  }{\dd t}  r^{(i)},Ar^{(j)}\rangle_{\Gamma}+ \langle Ar^{(i)},A\frac{\dd  }{\dd t}  r^{(j)}\rangle_{\Gamma}\\
        &=&      -\frac1J \sum_{k=1}^J F_{ik}F_{jk}-\frac1J \sum_{k=1}^J F_{jk}F_{ik}\;,
         \end{eqnarray*}
     and thus
       \begin{equation}
        \frac{\dd  }{\dd t}R=-\frac2JFF^\top.
       \end{equation}
Taking the trace of this identity gives
\begin{equation}
        \frac{\dd  }{\dd t}{\rm Tr}(R)=-\frac2J\|F\|^2_{{\rm Fr}},
       \end{equation}
where $\|\cdot\|_{{\rm Fr}}$ is the Frobenius norm.
The bound on the trace of $R$ follows. 

By the Cauchy-Schwartz inequality, we have
 \begin{eqnarray*}
 F_{ij}^2=\langle Ar^{(i)},Ae^{(j)}\rangle_{\Gamma}^2\le |Ar^{(i)}|_{\Gamma}^2\cdot |Ae^{(j)}|_{\Gamma}^2 \le C |Ae^{(j)}|_{\Gamma}^2\;,
 \end{eqnarray*}
 and hence, $F_{ij} \rightarrow 0$ at least as fast as {$\frac{1}{\sqrt{t}}$} as $t \rightarrow \infty$ as required.
\end{proof}
\begin{lemma}
\label{l:er}

Assume that $y$ is the image of a truth $\ud \in \cX$ under $A$ and the forward operator $A$ is one-to-one.
Then 
\begin{subequations}
\label{eq:er}
\begin{align}
Ae^{(j)}(t)&=\sum_{k=1}^{J} \ell_{jk}(t)Ae^{(k)}(0),\\
Ar^{(j)}(t)&=\sum_{k=1}^{J} q_{jk}(t)Ae^{(k)}(0)+\rho^{(j)}(t)
\end{align}
\end{subequations}
where the matrices $L=\{\ell_{jk}\}$ and $Q=\{q_{jk}\}$ 
satisfy
\begin{subequations}
\label{eq:LM}
\begin{align}
\frac{\dd L}{\dd t}=-\frac{1}{J}EL, \qquad
\frac{\dd Q}{\dd t}=-\frac{1}{J}FL,
\end{align}
\end{subequations}
and $\rho^{(j)}(t)=\rho^{(j)}(0)=\rho^{(1)}(0)$ is the projection of
$Ar^{(j)}(0)$ into the subspace which is orthogonal in $\cY$ to the linear span
of $\{Ae^{(k)}(0)\}_{k=1}^J$, with respect to the inner product $\langle\cdot,\cdot\rangle_\Gamma$.
As a consequence 
       \begin{eqnarray} \label{eq:diagE3}
  L(t)=X\Omega(t)X^{{\top}}\;
     \end{eqnarray}
     with
$\Omega(t)=\mbox{diag}\{ \omega^{(1)}(t),\ldots,\omega^{(J)}(t)\}$,
$\Omega(0)=I$
and
            \begin{eqnarray} \label{eq:odeenslambda4}
   \omega^{(j)}( t)={\Big(\frac 2J \lambda_0^{(j)}t+1\Big)}^
{-\frac12}\;.
     \end{eqnarray}
We also assume that the rank of the subspace spanned by the
vectors $\{Ae^{(j)}(t)\}_{j=1}^J$ is equal to $\tilde J$ and that 
(after possibly reordering the eigenvalues) 
$\lambda^{(1)}(t)=\ldots=\lambda^{(J-\tilde J)}(t)=0$ and 
$\lambda^{(J-\tilde J+1)}(t),\ldots,\lambda^{(J)}(t)>0$. It then
follows that $\omega^{(1)}(t)=\ldots=\omega^{(J-\tilde J)}(t)=1$.  
Furthermore, $L(t)x^{(k)}=L(t)^{-1}x^{(k)}=x^{(k)}$ for all $t \ge 0,\ k=1,\dots,J-\tilde J$, where $x^{(k)}$ are the columns of $X$.
Without loss of generality we may assume that {$Q(t)x^{(k)}=0$ for all $t \ge 0, \ k=1,\dots,J-\tilde J.$}

\end{lemma}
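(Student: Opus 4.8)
The plan is to exploit that, once $E(t)$ is known from Lemma \ref{l:EF}, the $\cY$-valued quantities $Ae^{(j)}$ and $Ar^{(j)}$ satisfy \emph{linear} (non-autonomous) ODEs, so that $L$, $Q$ and the $\rho^{(j)}$ can simply be read off, with the explicit diagonalization $E(t)=X\Lambda(t)X^{\top}$ used only to pin down the kernel structure at the end. First I would apply the bounded linear map $A$ to the identities \eqref{eq:e}–\eqref{eq:r} of Lemma \ref{lem:added}, obtaining $\frac{\dd}{\dd t}Ae^{(j)}=-\frac1J\sum_k E_{jk}(t)Ae^{(k)}$ and $\frac{\dd}{\dd t}Ar^{(j)}=-\frac1J\sum_k F_{jk}(t)Ae^{(k)}$, where by Lemma \ref{l:EF} both $E(t)$ and $F(t)$ are explicit, bounded functions of $t$. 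Two structural consequences do most of the work: (a) each $Ae^{(j)}(t)$ stays in $\Yl=\spann\{Ae^{(k)}(0)\}$, since its velocity is a linear combination of the $Ae^{(k)}$; and (b) the velocity of $Ar^{(j)}$ also lies in $\Yl$, so $Ar^{(j)}(t)-Ar^{(j)}(0)\in\Yl$ for all $t$. Defining $\rho^{(j)}(t)$ as the $\langle\cdot,\cdot\rangle_{\Gamma}$-orthogonal projection of $Ar^{(j)}(0)$ onto $\Yp$, (b) gives $\rho^{(j)}(t)=\rho^{(j)}(0)$ for all $t$; and since $Ar^{(j)}(0)-Ar^{(1)}(0)=A\bigl(e^{(j)}(0)-e^{(1)}(0)\bigr)\in\Yl$, the projections coincide, $\rho^{(j)}(0)=\rho^{(1)}(0)$.

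Next I would \emph{define} $L$ and $Q$ by the matrix ODEs \eqref{eq:LM} with $L(0)=I$ and $Q(0)=Q_0$, where $Q_0$ is any matrix with $\sum_k (Q_0)_{jk}Ae^{(k)}(0)$ equal to the $\Yl$-component of $Ar^{(j)}(0)$ (such $Q_0$ exists precisely because that component lies in $\Yl$). The representation \eqref{eq:er} then follows by uniqueness for linear ODEs: differentiating, the $\cY$-valued curves $t\mapsto\sum_k\ell_{jk}(t)Ae^{(k)}(0)$ and $t\mapsto\sum_k q_{jk}(t)Ae^{(k)}(0)+\rho^{(j)}$ satisfy, using \eqref{eq:LM} together with the definitions of $E$ and $F$, exactly the same linear ODEs and initial data as $Ae^{(j)}$ and $Ar^{(j)}$, hence agree for all $t\ge 0$. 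For the explicit form of $L$, substitute the ansatz $L=X\Omega X^{\top}$ into $\frac{\dd L}{\dd t}=-\frac1J EL$ with $E=X\Lambda X^{\top}$; this decouples into the scalar equations $\dot\omega^{(j)}=-\frac1J\lambda^{(j)}(t)\,\omega^{(j)}$, $\omega^{(j)}(0)=1$, which integrate against the explicit $\lambda^{(j)}(t)$ of \eqref{eq:odeenslambda} to give \eqref{eq:odeenslambda4}, with $\omega^{(j)}\equiv1$ when $\lambda_0^{(j)}=0$.

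Finally I would handle the kernel statements. Since $E(t)$ is the $\langle\cdot,\cdot\rangle_{\Gamma}$-Gram matrix of $\{Ae^{(j)}(t)\}$, its rank equals $\dim\spann\{Ae^{(j)}(t)\}=\tilde J$; from $E(t)=X\Lambda(t)X^{\top}$ with $\lambda^{(j)}(t)=0$ iff $\lambda_0^{(j)}=0$, this rank is time-independent, and after reordering $\lambda^{(1)}(t)=\dots=\lambda^{(J-\tilde J)}(t)=0$, so $\omega^{(1)}(t)=\dots=\omega^{(J-\tilde J)}(t)=1$ while all $\omega^{(j)}(t)>0$; hence $L(t)$ is invertible and $L(t)x^{(k)}=L(t)^{-1}x^{(k)}=x^{(k)}$ for $k\le J-\tilde J$. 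For such $k$, $x^{(k)}$ lies in the null eigenspace of $E(t)$ for \emph{every} $t$, which forces $\sum_l x^{(k)}_l Ae^{(l)}(t)=0$ and therefore $F(t)x^{(k)}=0$; this null space is also exactly the ambiguity in $Q_0$, so we may choose $Q_0 x^{(k)}=0$, and then $\frac{\dd}{\dd t}\bigl(Q x^{(k)}\bigr)=-\frac1J FL x^{(k)}=-\frac1J F x^{(k)}=0$ gives $Q(t)x^{(k)}=0$ for all $t$.

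The step I expect to be the main obstacle is the non-uniqueness of the coefficient matrices $L$ and $Q$ when the initial family $\{Ae^{(k)}(0)\}$ is linearly dependent: the clean fix is to define $L,Q$ through the ODEs \eqref{eq:LM} and recover the representations by ODE uniqueness, rather than by inverting a possibly singular change of coordinates, and then to use precisely the remaining gauge freedom — the span of the zero-eigenvectors $x^{(1)},\dots,x^{(J-\tilde J)}$, which is exactly the kernel of $c\mapsto\sum_k c_k Ae^{(k)}(0)$ — to normalize $Q_0$. Everything else is routine linear-ODE bookkeeping resting on Lemmas \ref{lem:added} and \ref{l:EF}.
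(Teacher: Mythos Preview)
Your proof is correct and follows essentially the same strategy as the paper: derive the matrix ODEs \eqref{eq:LM} for $L$ and $Q$ from Lemma \ref{lem:added}, solve $L$ explicitly via the diagonalization $E(t)=X\Lambda(t)X^{\top}$ from Lemma \ref{l:EF}, and handle the kernel statements through the null eigenvectors $x^{(k)}$. Your sub-arguments for $\rho^{(j)}(0)=\rho^{(1)}(0)$ (via the identity $r^{(j)}-r^{(1)}=e^{(j)}-e^{(1)}$) and for $F(t)x^{(k)}=0$ (via the Gram-matrix observation $(x^{(k)})^{\top}E(t)x^{(k)}=\|\sum_l x^{(k)}_l Ae^{(l)}(t)\|_{\Gamma}^2=0$) are in fact more direct than the paper's, which uses a minimization reformulation for the former and the ODE $\dot F=-\tfrac{2}{J}FE$ with $F(0)x^{(k)}=0$ for the latter.
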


\begin{proof} 
Differentiating expression (\ref{eq:er}a) and substituting in \eqref{eq:e}
from Lemma \ref{lem:added} gives 
$$\sum_{m=1}^J \frac{\dd \ell_{jm}}{\dd t}{A}e^{(m)}(0)=-\frac{1}{J}
\sum_{k=1}^J \sum_{m=1}^J E_{jk} \ell_{km}{A}e^{(m)}(0).$$
Reordering the double summation on the right-hand side
and re-arranging we obtain
$$\sum_{m=1}^J\Bigl(\frac{\dd \ell_{jm}}{\dd t}+\frac{1}{J}
\sum_{k=1}^JE_{jk} \ell_{km}\Bigr){A}e^{(m)}(0)=0.$$
This is satisfied identically if equation (\ref{eq:LM}a) holds.
By uniqueness choosing the ${A}e^{(j)}(t)$ to be defined in this way gives
the unique solution for their time evolution.

Now we differentiate expression (\ref{eq:er}b) and substitute into
\eqref{eq:r} from Lemma \ref{lem:added}.
A similar analysis to the preceding yields
$$\sum_{m=1}^J\Bigl(\frac{\dd Q_{jm}}{\dd t}+\frac{1}{J}
\sum_{k=1}^JF_{jk} \ell_{km}\Bigr){A}e^{(m)}(0)+\frac{\dd \rho^{(j)}}{\dd t}=0.$$
Again this can be satisfied identically if equation (\ref{eq:LM}b) holds
and if $\rho^{(j)}(t)$ is the constant function as specified above. 
By uniqueness we have the desired solution. The independence of  $\rho^{(j)}(t)$ with respect to $j$, i.e. $\rho^{(j)}(0)=\rho^{(1)}(0)$, follows from the fact
that $\rho^{(j)}(0)$ is the function inside the norm $\|\cdot\|_{\Gamma}$
which  is found by choosing the vector $q_j:=\{q_{jk}\}_{k=1}^J$ so as
to minimize the functional 
$$\|Ar^{(j)}(0)-\sum_{k=1}^{J} q_{jk}(0)Ae^{(k)}(0)\|_{\Gamma}.$$
From the definition of the $r^{(j)}$ and $e^{(j)}$ this is equivalent to
determining the function inside the norm $\|\cdot\|_{\Gamma}$
found by choosing the vector $q_j:=\{q_{jk}\}_{k=1}^J$ so as
to minimize the functional
$$\|Au^{(j)}(0)-\sum_{k=1}^{J} q_{jk}(0)A(u^{(k)}(0)-\bar u(0))-A\ud\|_{\Gamma}.$$
This in turn is equivalent to
determining the function inside the norm $\|\cdot\|_{\Gamma}$
found by choosing the vector $\tilde q:=\{\tilde q_{k}\}_{k=1}^J$ so as
to minimize the functional
$$\|\sum_{k=1}^{J} \tilde q_{k}Au^{(k)}(0)-A\ud\|_{\Gamma}$$
and is hence independent of $j$.
Our assumptions on the span of $\{Ae^{(j)}(t)\}_{j=1}^J$ imply that
$E(0)$ has exactly $J-\tilde{J}$ zero eigenvalues, corresponding to eigenvectors
$\{x^{(k)}\}_{k=1}^{J-\tilde{J}}$ with the property that
$$\sum_{j=1}^J x^{(k)}_j Ae^{(j)}(0)=0.$$
(One of these vectors $x^{(k)}$ is of course $\one$ so that $\tilde{J} \ge 1.$) 
As a consequence we also have
\begin{equation}
\label{eq:dda}
E(0)x^{(k)}=F(0)x^{(k)}=0\quad k=1, \cdots, J-\tilde{J}.
\end{equation}
The fact that $L(t)x^{(k)}=x^{(k)}$ for all $t \ge 0, \ k=1,\ldots, \tilde J$
is immediate from the fact that $L=X\Omega X^{\top}$, because
$x^{(k)}$ is the eigenvector corresponding to eigenvalue $\omega^{(k)}(t)=1\ k=1,\dots,\tilde J$; an identical argument shows the same for $L(t)^{-1}.$
The property that $Q(t)x^{(k)}=0$ for all $t \ge 0, \ k=1,\ldots, \tilde J$ follows by choosing
$Q(0)$ so that $Q(0)x^{(k)}=0$, which is always possible because the
$x^{(k)}$ are eigenvectors with corresponding eigenvalues $\lambda^{(k)}=0$, and then noting
that $Q(t)x^{(k)}=0$ for all time because $F(t)L(t)x^{(k)}=F(t)x^{(k)}=0$ for all $t
\ge 0.$
The last item is zero because $Ex^{(k)}=0$ and because 
$\frac{\dd  }{\dd t}F=-\frac2JFE$; we also use that $F(0)x^{(k)}=0$
from \eqref{eq:dda}.

\end{proof}

\end{document}